\newcommand{\Cat}{{\rm Cat}}
\newcommand{\Vect}{{\rm Vect}}
\newcommand{\Span}{{\rm Span}}
\renewcommand{\hom}{{\rm hom}}
\newcommand{\To}{\Rightarrow}
\renewcommand{\to}{\rightarrow}
\newcommand{\maps}{\colon}
\newcommand{\iso}{\cong}
\newcommand{\SL}{\mathrm{SL}}
\newcommand{\R}{{\mathbb R}}
\newcommand{\C}{{\mathbb C}}
\newcommand{\F}{{\mathbb F}}
\newcommand{\N}{{\mathbb N}}
\newcommand{\Q}{{\mathbb Q}}
\newcommand{\Z}{{\mathbb Z}}
\newcommand{\U}{{\rm U}}
\newcommand{\Aut}{{\rm Aut}}
\newcommand{\ip}[2]{\langle #1,#2 \rangle}
\newcommand{\Over}{/\!/}
\newcommand{\w}{\! \colon \!}
\newcommand{\Ob}{{\rm Ob}}
\newcommand{\Mor}{{\rm Mor}}
\newcommand{\comment}[1]{}
\renewcommand{\u}[1]{\underline{#1}}
\newtheorem{thm}{Theorem}
\newtheorem{definition}[thm]{Definition}
\newtheorem{lemma}[thm]{Lemma}
\newtheorem{proposition}[thm]{Proposition}
\newtheorem{example}[thm]{Example}
\newtheorem{notation}[thm]{Notation}
\newtheorem*{theorem*}{Theorem}
\newtheorem*{definition*}{Definition}
\newtheorem*{lemma*}{Lemma}
\newtheorem*{corollary*}{Corollary}
\newtheorem*{proposition*}{Proposition}
\newtheorem*{example*}{Example}
\newtheorem*{conjecture*}{Conjecture}
\newtheorem*{remark*}{Remark}
\newtheorem*{notation*}{Notation}
\newtheorem*{convention*}{Convention}
\begin{document}
\sloppy
\title{Groupoidification Made Easy}
\author{John C.\ Baez, Alexander E.\ Hoffnung, and Christopher D.\ Walker\\
       Department of Mathematics, University of California\\
       Riverside, CA 92521 USA}
\maketitle

\begin{abstract}
\noindent
Groupoidification is a form of categorification in which vector spaces
are replaced by groupoids, and linear operators are replaced by spans
of groupoids.  We introduce this idea with a detailed exposition of
`degroupoidification': a systematic process that turns groupoids and
spans into vector spaces and linear operators.  Then we present two
applications of groupoidification.  The first is to Feynman diagrams.
The Hilbert space for the quantum harmonic oscillator arises naturally
from degroupoidifying the groupoid of finite sets and bijections.
This allows for a purely combinatorial interpretation of creation
and annihilation operators, their commutation relations, field
operators, their normal-ordered powers, and finally Feynman
diagrams.  The second application is to Hecke algebras.  We explain
how to groupoidify the Hecke algebra associated to a Dynkin diagram
whenever the deformation parameter $q$ is a prime power.  We illustrate
this with the simplest nontrivial example, coming from the $A_2$ 
Dynkin diagram.  In this example we show that the solution of the
Yang--Baxter equation built into the $A_2$ Hecke algebra arises
naturally from the axioms of projective geometry applied to the 
projective plane over the finite field $\mathbb{F}_q$.
\end{abstract}

\section{Introduction}\label{intro}

`Groupoidification' is an attempt to expose the combinatorial
underpinnings of linear algebra --- the hard bones of set theory
underlying the flexibility of the continuum.  One of the main lessons
of modern algebra is to avoid choosing bases for vector spaces until
you need them.  As Hermann Weyl wrote, ``The introduction of a
coordinate system to geometry is an act of violence".  But vector
spaces often come equipped with a natural basis --- and when this
happens, there is no harm in taking advantage of it.  The most obvious
example is when our vector space has been defined to consist of formal
linear combinations of the elements of some set.  Then this set is our
basis.  But surprisingly often, the elements of this set are {\it
isomorphism classes of objects in some groupoid}.  This is when
groupoidification can be useful.  It lets us work directly with the
groupoid, using tools analogous to those of linear algebra, without
bringing in the real numbers (or any other ground field).

For example, let $E$ be the groupoid of finite sets and bijections.
An isomorphism class of finite sets is just a natural number, so the
set of isomorphism classes of objects in $E$ can be identified with
$\N$.  Indeed, this is why natural numbers were invented in the first
place: to count finite sets.  The real vector space with $\N$ as basis
is usually identified with the polynomial algebra $\R[z]$, since that
has basis $z^0, z^1, z^2, \dots.$  Alternatively, we can work with
{\it infinite} formal linear combinations of natural numbers, which
form the algebra of formal power series, $\R[[z]]$.  So, formal power
series should be important when we apply the tools of linear algebra
to study the groupoid of finite sets.

Indeed, formal power series have long been used as `generating
functions' in combinatorics \cite{Wilf}.  Given a combinatorial
structure we can put on finite sets, its generating function is the
formal power series whose $n$th coefficient says how many ways we can
put this structure on an $n$-element set.  Andr\'e Joyal formalized
the idea of `a structure we can put on finite sets' in terms of {\it
esp\`eces de structures}, or `structure types' \cite{BLL,Joyal,
Joyal2}.  Later his work was generalized to `stuff types'
\cite{BaezDolan:2001}, which are a key example of groupoidification.

Heuristically, a stuff type is a way of equipping finite sets with a
specific type of extra stuff --- for example a 2-coloring, or a linear
ordering, or an additional finite set.  Stuff types have generating
functions, which are formal power series.  Combinatorially interesting
operations on stuff types correspond to interesting operations on their
generating functions: addition, multiplication, differentiation, and
so on.  Joyal's great idea amounts to this: {\it work directly with
stuff types as much as possible, and put off taking their generating
functions.}  As we shall see, this is an example of groupoidification.

To see how this works, we should be more precise.  A {\bf stuff type}
is a groupoid over the groupoid of finite sets: that is, a groupoid
$\Psi$ equipped with a functor $v \maps \Psi \to E$.  The reason for
the funny name is that we can think of $\Psi$ as a groupoid of finite
sets `equipped with extra stuff'.  The functor $v$ is then the
`forgetful functor' that forgets this extra stuff and gives the
underlying set.

The generating function of a stuff type $v \maps \Psi \to E$ is the
formal power series
\begin{equation}
\label{eq:generating_function}
       \utilde{\Psi}(z) = \sum_{n = 0}^\infty |v^{-1}(n)| \, z^n .
\end{equation}
Here $v^{-1}(n)$ is the `essential inverse image' of any $n$-element
set, say $n \in E$.  We define this term later, but the idea is
straightforward: $v^{-1}(n)$ is the groupoid of $n$-element sets
equipped with the given type of stuff.  The $n$th coefficient of the
generating function measures the size of this groupoid.

But how?  Here we need the concept of {\it groupoid cardinality}.
It seems this concept first appeared in algebraic geometry
\cite{Behrends:1993, Kim:1995}.  We rediscovered it by
pondering the meaning of division \cite{BaezDolan:2001}.  Addition of
natural numbers comes from disjoint union of finite sets, since
\[        |S + T| = |S| + |T|  .\]
Multiplication comes from cartesian product:
\[        |S \times T| = |S| \times |T|  .\]
But what about division?

If a group $G$ acts on a set $S$, we can `divide' the set by the group
and form the quotient $S/G$.  If $S$ and $G$ are finite and $G$ acts
freely on $S$, $S/G$ really deserves the name `quotient', since then
\[          |S/G| = |S|/|G|.   \]
Indeed, this fact captures some of our naive intuitions about division.
For example, why is $6/2 = 3$?  We can take a 6-element
set $S$ with a free action of the group $G = \Z/2$ and 
construct the set of orbits $S/G$:

\medskip
\centerline{\epsfysize=1.0in\epsfbox{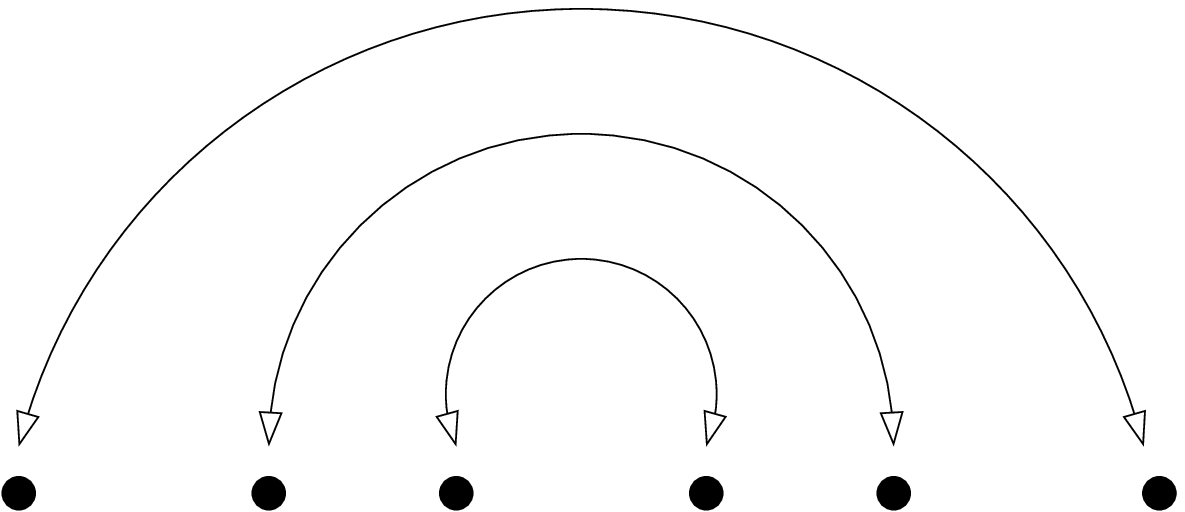}}
\medskip

\noindent
Since we are `folding the 6-element set in half', we get $|S/G| = 3$.

The trouble starts when the action of $G$ on $S$ fails to
be free.  Let's try the same trick starting with a 5-element set:

\medskip
\centerline{\epsfysize=1.0in\epsfbox{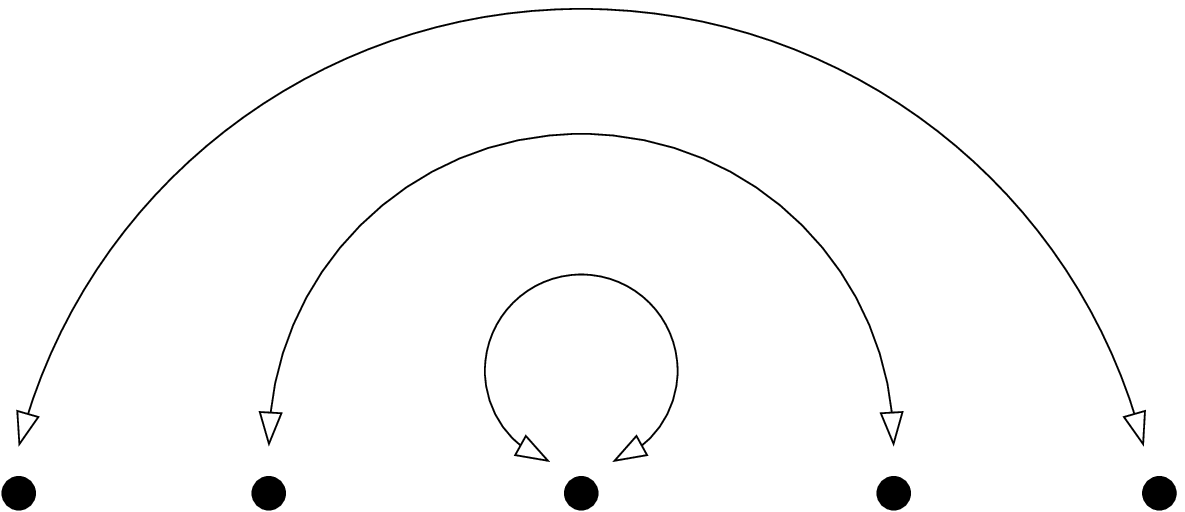}}
\medskip

\noindent
We don't obtain a set with $2\frac{1}{2}$ elements!  The reason is
that the point in the middle gets mapped to itself.  To get the
desired cardinality $2 \frac{1}{2}$, we would need a way to count this
point as `folded in half'.

To do this, we should first replace the ordinary quotient $S/G$ by the
`action groupoid' or `weak quotient' $S/\!/G$.  This is the groupoid
where objects are elements of $S$, and a morphism from $s \in S$ to
$s' \in S$ is an element $g \in G$ with $gs = s'$.  Composition of
morphisms works in the obvious way.  Next, we should
define the `cardinality' of a groupoid as follows.  For each
isomorphism class of objects, pick a representative $x$ and compute
the reciprocal of the number of automorphisms of this object; then sum
the result over isomorphism classes.  In other words, define the
{\bf cardinality} of a groupoid $X$ to be
\begin{equation}
\label{eq:groupoid_cardinality}
      |X| = \sum_{\rm isomorphism\; classes \; of \; objects\; [x]}
\frac{1}{|\Aut(x)|} \;. 
\end{equation}
With these definitions, our problematic example gives a groupoid
$S/\!/G$ with cardinality $2\frac{1}{2}$, since the point in the
middle of the picture gets counted as `half a point'.  In fact,
\[       |S/\!/G| = |S|/|G|    \]
whenever $G$ is a finite group acting on a finite set $S$.  

The concept of groupoid cardinality gives an elegant definition of the
generating function of a stuff type --- Eq.\
\ref{eq:generating_function} --- which matches the usual `exponential
generating function' from combinatorics.  For the details of how this
works, see Example \ref{generating_function}.

Even better, we can vastly generalize the notion of generating
function, by replacing $E$ with an arbitrary groupoid.  For any
groupoid $X$ we get a vector space: namely $\R^{\underline{X}}$, the
space of functions $\psi \maps \underline{X} \to \R$, where
$\underline{X}$ is the set of isomorphism classes of objects in $X$.
Any sufficiently nice groupoid over $X$ gives a vector in this
vector space.

The question then arises: what about linear operators?  Here it is 
good to take a lesson from Heisenberg's matrix mechanics.  In his
early work on quantum mechanics, Heisenberg did not know about matrices.
He reinvented them based on this idea: a matrix $S$ can describe
a quantum process by letting the matrix entry $S_{ji} \in \C$ stand
for the `amplitude' for a system to undergo a transition from its 
$i$th state to its $j$th state.  

The meaning of complex amplitudes was somewhat mysterious ---
and indeed it remains so, much as we have become accustomed to it.
However, the mystery evaporates if we have a matrix whose entries
are natural numbers.  Then the matrix entry $S_{ji} \in \N$
simply counts the {\it number of ways} for the system to undergo
a transition from its $i$th state to its $j$th state.  

Indeed, let $X$ be a set whose elements are possible `initial states'
for some system, and let $Y$ be a set whose elements are possible
`final states'.  Suppose $S$ is a set equipped with maps to $X$ and
$Y$:
\[\xymatrix{
 & S\ar[dl]_q\ar[dr]^p & \\
 Y & & X \\
}\]
Mathematically, we call this setup a span of sets.  Physically,
we can think of $S$ as a set of possible `events'.  Points in $S$ sitting 
over $i \in X$ and $j \in Y$ form a subset
\[        S_{ji} = \{ s \colon \; q(s) = j, \; p(s) = i \}  . \]
We can think of this as the {\it set of ways} for the system to undergo a 
transition from its $i$th state to its $j$th state.  
Indeed, we can picture $S$ more vividly as a matrix of sets: \\ \\

\[
\begin{picture}(230,140)
  \includegraphics[scale=0.5]{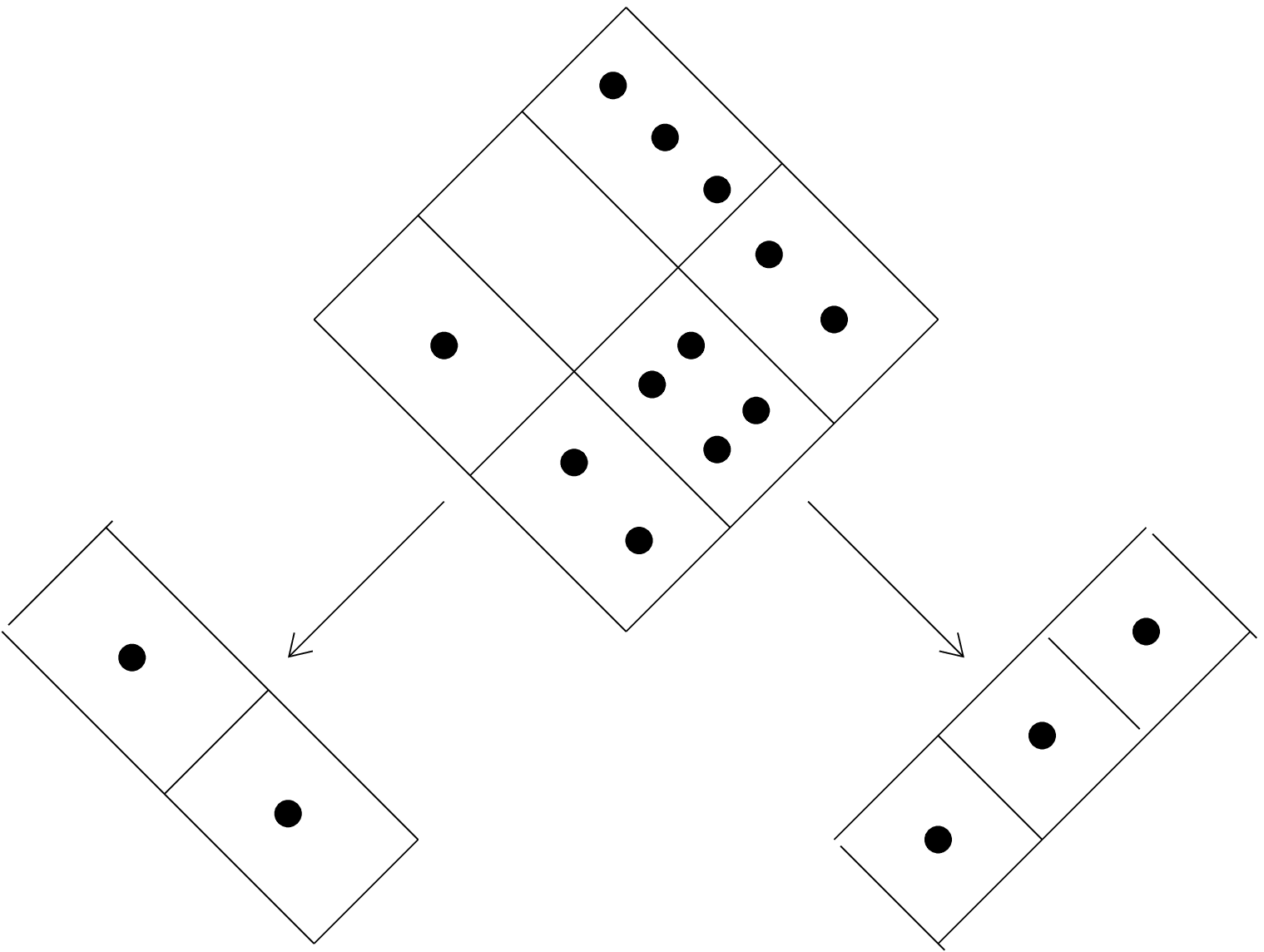}
  \put(-165,70){$q$}
  \put(-55,70){$p$}
  \put(-10,75){$X$}
  \put(-215,75){$Y$}
  \put(-90,160){$S$}
\end{picture}
\]
\noindent
If all the sets $S_{ji}$ are finite, we get a matrix of natural
numbers $|S_{ji}|$.

Of course, matrices of natural numbers only allow us to do a limited
portion of linear algebra.  We can go further if we consider, not spans
of sets, but {\it spans of groupoids}.  We can picture one of
these roughly as follows:

\[
\begin{picture}(230,180)
  \includegraphics[scale=0.55]{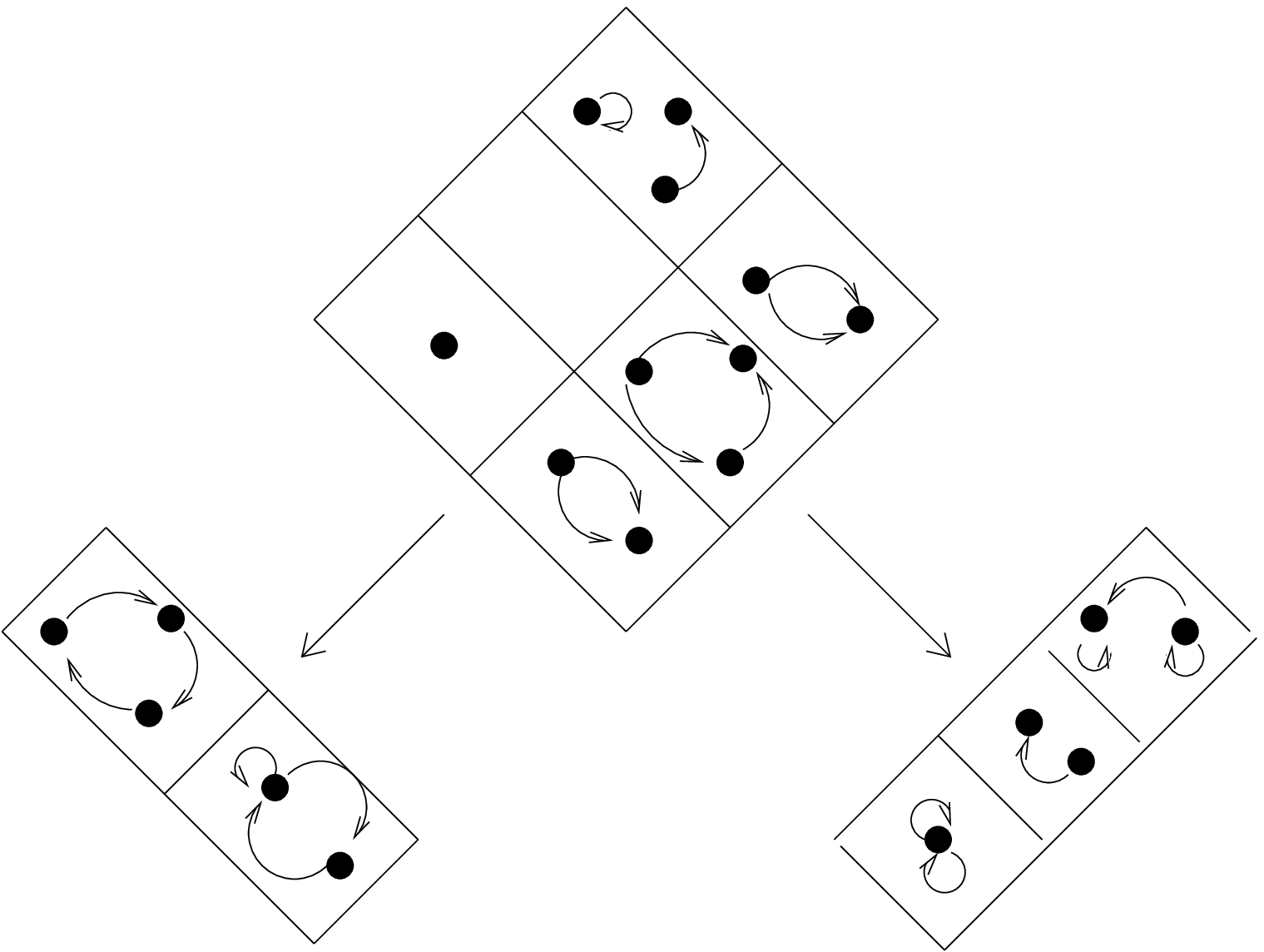}
  \put(-185,70){$q$}
  \put(-60,70){$p$}
  \put(-10,80){$X$}
  \put(-235,80){$Y$}
  \put(-100,175){$S$}
\end{picture}
\]

\noindent
If a span of groupoids is sufficiently nice --- our technical term
will be `tame' --- we can convert it into a linear operator from
$\R^{\underline X}$ to $\R^{\underline Y}$.  Viewed as a matrix, this
operator will have nonnegative real matrix entries.  So, we have not
succeeded in `groupoidifying' full-fledged quantum mechanics, where
the matrices can be complex.  Still, we have made some progress.

As a sign of this, it turns out that any groupoid $X$ gives not just a
vector space $\R^{\underline X}$, but a real Hilbert space $L^2(X)$.
If $X = E$, the complexification of this Hilbert space is the Hilbert
space of the quantum harmonic oscillator.  The quantum harmonic
oscillator is the simplest system where we can see the usual tools of
quantum field theory at work: for example, Feynman diagrams.  It turns
out that large portions of the theory of Feynman diagrams can be done
with spans of groupoids replacing operators \cite{BaezDolan:2001}.
The combinatorics of these diagrams then becomes vivid, stripped bare
of the trappings of analysis.  We sketch how this works in Section
\ref{feynman}.  A more detailed treatment can be found in the work of
Jeffrey Morton \cite{Morton:2006}.

To get complex numbers into the game, Morton generalizes groupoids to
`groupoids over $\U(1)$': that is, groupoids $X$ equipped with
functors $v \maps X \to \U(1)$, where $\U(1)$ is the
groupoid with unit complex numbers as objects and only identity 
morphisms.  The cardinality of a groupoid over $\U(1)$ can be complex.

Other generalizations of groupoid cardinality are also interesting.
For example, Leinster has generalized it to categories
\cite{Leinster:2008}.  The cardinality of a category can be negative!
More recently, Weinstein has generalized it to Lie groupoids
\cite{Weinstein:2008}.  Getting a useful generalization of groupoids
for which the cardinality is naturally complex, without putting in the
complex numbers `by hand', remains an elusive goal.  However, the work
of Fiore and Leinster suggests it is possible
\cite{FioreLeinster:2005}.

In the last few years James Dolan, Todd Trimble and the authors have
applied groupoidification to structures related to quantum groups,
most notably Hecke algebras and Hall algebras.  A beautiful story has
begun to emerge in which $q$-deformation arises naturally from
replacing the groupoid of finite sets by the groupoid of
finite-dimensional vector spaces over $\F_q$, where $q$ is a prime
power.  To some extent this work is a reinterpretation of known facts.
However, groupoidification gives a conceptual framework for what
before might have seemed a strange set of coincidences.

We hope to write up this material and develop it further in the years to
come.  For now, the reader can turn to the online videos and notes
available through U.\ C.\ Riverside \cite{Baez}.  The present
paper has a limited goal: we wish to explain the basic machinery of
groupoidification as simply as possible.

In Section \ref{degroupoidification}, we present the basic facts about
`degroupoidification': the process that turns groupoids into vector
spaces and tame spans into linear operators.  Section \ref{feynman}
describes how to groupoidify the theory of Feynman diagrams; Section
\ref{hecke} describes how to groupoidify the theory of Hecke algebras.
In Section \ref{processes} we prove that the process of
degroupoidifying a tame span gives a well-defined linear operator.  We
also give an explicit criterion for when a span of groupoids is tame,
and explicit formula for the operator coming from a tame span.
Section \ref{properties} proves many other results stated earlier in
the paper.  Appendix \ref{appendix} proves some basic definitions and
useful lemmas regarding groupoids and spans of groupoids.  The goal is
to make it easy for readers to try their own hand at
groupoidification.

\section{Degroupoidification}\label{degroupoidification}

In this section we describe a systematic process for turning groupoids
into vector spaces and tame spans into linear operators.  This
process, `degroupoidification', is in fact a kind of functor.
`Groupoidification' is the attempt to {\it undo} this functor.  To
`groupoidify' a piece of linear algebra means to take some structure
built from vector spaces and linear operators and try to find
interesting groupoids and spans that degroupoidify to give this
structure.  So, to understand groupoidification, we need to master
degroupoidification.

We begin by describing how to turn a groupoid into a vector space.  In
what follows, all our groupoids will be `essentially small'.  This
means that they have a {\it set} of isomorphism classes of objects,
not a proper class.  We also assume our groupoids have finite homsets.
In other words, given any pair of objects, the set of morphisms from
one object to another is finite.

\begin{definition}
Given a groupoid $X$, let $\underline{X}$ be the set of isomorphism
classes of objects of $X$.
\end{definition}
\begin{definition}
\label{vectorspace}
Given a groupoid $X$, let the {\bf degroupoidification} of $X$ be
the vector space
\[ \R^{\underline{X}} = \lbrace \Psi \maps \underline{X} \to \R \rbrace. \]
\end{definition}

A nice example is the groupoid of finite sets and bijections:
\begin{example}
\label{power_series}
\textup{
Let $E$ be the groupoid of finite sets and bijections. Then
$\underline{E}\iso \N$, so
\[ \R^{\underline{E}} \iso \lbrace \psi \maps \N \to \R \rbrace 
\cong \R[[z]] , \]
where the formal power series associated to a function $\psi
\maps \N \to \R$ is given by:
\[  \sum_{n\in\N}\psi(n)z^n. \]
}
\end{example}

A sufficiently nice groupoid over a groupoid $X$ will give a vector
in $\R^{\underline{X}}$.  To construct this, we use the concept of
groupoid cardinality:

\begin{definition}
The {\bf cardinality} of a groupoid $X$ is
\[ |X| = \sum_{[x]\in \underline{X}} \frac{1}{|\Aut(x)|} \]
where $|\Aut(x)|$ is the cardinality of the automorphism group of an object
$x$ in $X$.  If this sum diverges, we say $|X| = \infty$.
\end{definition}

The cardinality of a groupoid $X$ is a well-defined nonnegative rational
number whenever $\underline{X}$ and all the automorphism groups of
objects in $X$ are finite.  More generally, we say:

\begin{definition}
A groupoid $X$ is {\bf tame} if $|X| < \infty$.
\end{definition}
We show in Lemma \ref{EQUIVGRPD} that given equivalent groupoids
$X$ and $Y$, $|X| = |Y|$.  We describe a useful alternative method 
for computing groupoid cardinality in Lemma \ref{ALTCARD}.

The reason we use $\R$ rather than $\Q$ as our ground field is that
there are interesting groupoids whose cardinalities are irrational numbers.
The following example is fundamental:

\begin{example}
\textup{
The groupoid of finite sets $E$ has cardinality
\[ |E| ~=~ \sum_{n \in \N} \frac{1}{|S_n|} ~=~ 
\sum_{n \in \N} \frac{1}{n!} ~=~ e. \]
}
\end{example}

With the concept of groupoid cardinality in hand, we now describe
how to obtain a vector in $\R^{\underline{X}}$ 
from a sufficiently nice groupoid over $X$.

\begin{definition}
Given a groupoid $X$, a {\bf groupoid over $X$} is a groupoid $\Psi$ 
equipped with a functor $v \maps \Psi \to X$.
\end{definition}
\begin{definition}
Given a groupoid over $X$, say $v \maps \Psi \to X$, and an object $x \in X$,
we define the {\bf essential inverse image} of $x$,
denoted $v^{-1}(x)$, to be the groupoid where:
\begin{itemize}
\item
an object is an object $a \in \Psi$ such that $v(a) \cong x$;
\item
a morphism $f \maps a \to a'$ is any morphism in $\Psi$ from $a$ to
$a'$.
\end{itemize}
\end{definition}
\begin{definition}
A groupoid over $X$, say $v \maps \Psi \to X$, is {\bf tame} if the 
groupoid $v^{-1}(x)$ is tame for all $x\in X$. 
\end{definition}
\begin{definition}
\label{degroupoidification_of_vectors}
Given a tame groupoid over $X$, say $v \maps \Psi\to X$, 
there is a vector $\utilde{\Psi} \in \R^{\underline{X}}$ 
defined by:
\[ \utilde{\Psi}([x]) = |v^{-1}(x)|. \]
\end{definition}

As discussed in Section \ref{intro}, the theory of generating functions
gives many examples of this construction.  Here is one:

\begin{example}
\label{generating_function}
\textup{
Let $\Psi$ be the groupoid of 2-colored finite sets.
An object of $\Psi$ is a `2-colored finite set': that is
a finite set $S$ equipped with a function $c: S \to 2$, where
$2 = \{0,1\}$.  A morphism of $\Psi$ is a function between
2-colored finite sets preserving the 2-coloring: that is, a 
commutative diagram of this sort:
\[
\xymatrix{
& S \ar[dr]_{c} \ar[rr]^{f} && S' \ar[dl]^{c'}  \\
 & & \{0,1\} & & 
}
\]       
There is an forgetful functor $v \maps \Psi \to E$ sending any
2-colored finite set $c \maps S \to 2$ to its underlying set $S$.  It is a
fun exercise to check that for any $n$-element set, say $n$ for short,
the groupoid $v^{-1}(n)$ is equivalent to the weak quotient
$2^n/\!/S_n$, where $2^n$ is the set of functions $c: n \to 2$ and the
permutation group $S_n$ acts on $2^n$ in the obvious way.  It follows
that
\[  \utilde{\Psi}(n) = |v^{-1}(n)| = |2^n /\!/ S_n | = 2^n/n! \]
so the corresponding power series is
\[ \utilde{\Psi} = \sum_{n \in \N} \frac{2^n}{n!}z^n = 
e^{2z} \in \R[[z]]. \]
This is called the {\bf generating function} of $v \maps \Psi \to E$.
Note that the $n!$ in the denominator, often regarded as a convention,
arises naturally from the use of groupoid cardinality.
}
\end{example}

Both addition and scalar multiplication of vectors have 
groupoidified analogues.  We can add two groupoids $\Phi$,
$\Psi$ over $X$ by taking their coproduct, i.e., the disjoint union of
$\Phi$ and $\Psi$ with the obvious map to $X$:
\[
\xymatrix{
\Phi + \Psi \ar[d] \\
X
}
\]
We then have:
\begin{proposition*}
Given tame groupoids $\Phi$ and $\Psi$ over $X$,
\[\utilde{\Phi + \Psi} = \utilde{\Phi} + \utilde{\Psi}.\]
\end{proposition*}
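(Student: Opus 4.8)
The plan is to evaluate both sides of the claimed equality at an arbitrary isomorphism class $[x] \in \underline{X}$ and reduce everything to the additivity of groupoid cardinality under disjoint union. Write $v_\Phi \maps \Phi \to X$ and $v_\Psi \maps \Psi \to X$ for the two structure maps, and $w \maps \Phi + \Psi \to X$ for the induced functor out of the coproduct.

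First I would identify the essential inverse image of a coproduct. For every object $x \in X$ there is an isomorphism of groupoids
\[ w^{-1}(x) \;\iso\; v_\Phi^{-1}(x) + v_\Psi^{-1}(x). \]
Indeed, an object of $w^{-1}(x)$ is an object $a$ of $\Phi + \Psi$ with $w(a) \iso x$; since the objects of $\Phi + \Psi$ are the disjoint union of those of $\Phi$ and $\Psi$, and $w$ restricts to $v_\Phi$ on one part and $v_\Psi$ on the other, such an $a$ lies either in $\Phi$ with $v_\Phi(a) \iso x$ or in $\Psi$ with $v_\Psi(a) \iso x$. Moreover $\Phi + \Psi$ has no morphisms between the $\Phi$-part and the $\Psi$-part, so the homsets of $w^{-1}(x)$ agree with those of the disjoint union on the right. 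Hence the two groupoids are literally the same, once we unwind the definition of the coproduct.

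Next I would invoke (or, since it is immediate, establish) the additivity of groupoid cardinality: for any groupoids $A$ and $B$ one has $|A + B| = |A| + |B|$, because $\underline{A + B} \iso \underline{A} \sqcup \underline{B}$ and the automorphism group of an object is the same whether computed in $A$ (or $B$) or in $A + B$; compare Lemma \ref{EQUIVGRPD}. In particular the coproduct of two tame groupoids is tame. Applying this with $A = v_\Phi^{-1}(x)$ and $B = v_\Psi^{-1}(x)$, which are tame by hypothesis, shows $w^{-1}(x)$ is tame for every $x$, so $w \maps \Phi + \Psi \to X$ is a tame groupoid over $X$ and $\utilde{\Phi + \Psi}$ is well-defined. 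Then for each $[x] \in \underline{X}$,
\[ \utilde{\Phi + \Psi}([x]) = |w^{-1}(x)| = |v_\Phi^{-1}(x)| + |v_\Psi^{-1}(x)| = \utilde{\Phi}([x]) + \utilde{\Psi}([x]), \]
and since $[x]$ was arbitrary this is the asserted equality of vectors in $\R^{\underline{X}}$.

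There is no serious obstacle here; the only point needing care is the bookkeeping in the first step — checking that the \emph{essential} in "essential inverse image" (allowing $v(a) \iso x$ rather than $v(a) = x$) does not let the two summands interact. It does not, precisely because the coproduct groupoid carries no morphisms between the two parts and the coproduct functor sends each part into $X$ via its own structure map, so the isomorphism condition is tested separately on each side.
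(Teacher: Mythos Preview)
Your proposal is correct and follows essentially the same route as the paper's proof of Lemma \ref{addvectors}: show that the essential inverse image of $x$ under the coproduct map is the coproduct of the individual essential inverse images, then use additivity of groupoid cardinality under coproduct. The paper states both steps in a single sentence without the bookkeeping you supply, but the argument is the same.
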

\begin{proof}
This will appear later as part of Lemma \ref{addvectors},
which also considers infinite sums.
\end{proof}

We can also multiply a groupoid over $X$ by a `scalar' --- that is,
a fixed groupoid.  Given a groupoid over $X$, say $v \maps \Phi\to X$, 
and a groupoid 
$\Lambda$, the cartesian product $\Lambda\times \Psi$ becomes a 
groupoid over $X$ as follows:
\[\xymatrix{
\Lambda\times \Psi \ar[d]^{v\pi_2}\\ X\\ }\] 
where $\pi_2 \maps \Lambda\times \Psi\to \Psi$ is projection onto the
second factor.  We then have:

\begin{proposition*}
Given a groupoid $\Lambda$ and a groupoid $\Psi$ over $X$, the groupoid
$\Lambda \times \Psi$ over $X$ satisfies
\[\utilde{\Lambda \times \Psi} = |\Lambda|\utilde{\Psi}.\]
\end{proposition*}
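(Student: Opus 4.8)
The plan is to reduce the claim to two facts. The first is that, for each object $x \in X$, the essential inverse image of $x$ under the composite $v\pi_2 \maps \Lambda \times \Psi \to X$ is (literally, or at worst up to isomorphism) the product groupoid $\Lambda \times v^{-1}(x)$. The second is that groupoid cardinality is multiplicative with respect to products: $|\Lambda \times Y| = |\Lambda|\,|Y|$ for any groupoids $\Lambda$ and $Y$. Granting both, for every $[x] \in \underline{X}$ we compute $\utilde{\Lambda \times \Psi}([x]) = |(v\pi_2)^{-1}(x)| = |\Lambda \times v^{-1}(x)| = |\Lambda|\,|v^{-1}(x)| = |\Lambda|\,\utilde{\Psi}([x])$, which is exactly the asserted identity of vectors in $\R^{\underline{X}}$.

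For the first fact, I would just unwind the definitions. An object of $(v\pi_2)^{-1}(x)$ is an object $(\lambda, a)$ of $\Lambda \times \Psi$ with $v(\pi_2(\lambda, a)) = v(a) \cong x$; as $\lambda$ ranges over all objects of $\Lambda$ and $a$ over the objects of $\Psi$ with $v(a) \cong x$, this is precisely an object of $\Lambda \times v^{-1}(x)$. A morphism of $(v\pi_2)^{-1}(x)$ is any morphism $(g, f)$ of $\Lambda \times \Psi$ between two such objects, i.e.\ a pair consisting of a morphism of $\Lambda$ and a morphism of $\Psi$ --- again precisely a morphism of $\Lambda \times v^{-1}(x)$. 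So the two groupoids agree on the nose, and in any case have equal cardinality by Lemma \ref{EQUIVGRPD}.

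The real content is the multiplicativity of cardinality, and this is the only step where I expect any care to be required. We have $\underline{\Lambda \times Y} \cong \underline{\Lambda} \times \underline{Y}$, and for objects $\lambda \in \Lambda$, $y \in Y$ the automorphism group factors as $\Aut_{\Lambda \times Y}(\lambda, y) \cong \Aut_{\Lambda}(\lambda) \times \Aut_{Y}(y)$, so $|\Aut(\lambda, y)| = |\Aut(\lambda)|\,|\Aut(y)|$. Hence
\[
|\Lambda \times Y| = \sum_{([\lambda],[y])} \frac{1}{|\Aut(\lambda)|\,|\Aut(y)|} = \Bigl(\sum_{[\lambda]} \frac{1}{|\Aut(\lambda)|}\Bigr)\Bigl(\sum_{[y]} \frac{1}{|\Aut(y)|}\Bigr) = |\Lambda|\,|Y| .
\]
Since every summand is nonnegative, the factorization of the double sum is valid regardless of convergence, so this identity holds in $[0, \infty]$.

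Finally I would apply this with $Y = v^{-1}(x)$, which is tame for every $x$ because $\Psi$ is a tame groupoid over $X$, to finish the computation displayed in the first paragraph. The vector $\utilde{\Lambda \times \Psi}$ is well defined exactly when each $|\Lambda \times v^{-1}(x)| = |\Lambda|\,|v^{-1}(x)|$ is finite; in particular this forces $|\Lambda| < \infty$ as soon as some $v^{-1}(x)$ is nonempty, and in the degenerate case $|\Lambda| = \infty$ every $v^{-1}(x)$ is empty, so both sides of the asserted equation vanish (with the convention $\infty \cdot 0 = 0$). Thus the only genuine obstacle is the bookkeeping around possibly infinite cardinalities, which is dispatched by nonnegativity of all terms.
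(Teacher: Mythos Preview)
Your proof is correct and essentially the same as the paper's: the paper's Lemma~\ref{essinverseproduct} computes $|(c\times v)^{-1}(1,x)| = |\Lambda|\,|v^{-1}(x)|$ directly, by first observing that objects of the essential inverse image are pairs $(\lambda,y)$ with $y \in v^{-1}(x)$ and then factoring the cardinality sum using $|\Aut(\lambda,y)| = |\Aut(\lambda)|\,|\Aut(y)|$ --- exactly your two facts, just merged into a single computation. Your explicit separation into ``the essential inverse image is a product'' and ``cardinality is multiplicative'' is slightly cleaner, and your discussion of the degenerate $|\Lambda| = \infty$ case is more careful than the paper's, but the underlying argument is identical.
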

\begin{proof}
This is proved as Proposition \ref{scalarmult1}.
\end{proof}

We have seen how degroupoidification turns a groupoid $X$ into a vector space
$\R^{\underline{X}}$.  Degroupoidification also turns any sufficiently 
nice span of groupoids into a linear operator.

\begin{definition}
Given groupoids $X$ and $Y$, a {\bf span} from $X$ to $Y$ is a diagram
\[\xymatrix{
 & S\ar[dl]_q\ar[dr]^p & \\
 Y & & X \\
}\]
where $S$ is groupoid and $p\maps S\to X$ and $q \maps S\to Y$ are functors.
\end{definition}

To turn a span of groupoids into a linear operator, we need a
construction called the `weak pullback'.  This construction will let
us apply a span from $X$ to $Y$ to a groupoid over $X$ to obtain a
groupoid over $Y$.  Then, since a tame groupoid over $X$ gives a
vector in $\R^{\underline{X}}$, while a tame groupoid over $Y$ gives a
vector in $\R^{\underline{Y}}$, a sufficiently nice span from $X$ to
$Y$ will give a map from $\R^{\underline{X}}$ to $\R^{\underline{Y}}$.
Moreover, this map will be linear.

As a warmup for understanding weak pullbacks for groupoids, we recall
ordinary pullbacks for sets, also called `fibered products'.
The data for constructing such a pullback is a pair of sets equipped
with functions to the same set:
\[
\xymatrix{
& T \ar[dr]_{q} & & S \ar[dl]^{p} & \\
 & & X & & 
}
\]  
The pullback is the set
\[ P = \lbrace (s,t) \in S \times T \, | \; p(s) = q(t) \rbrace  \]
together with the obvious projections $\pi_S \maps P \to S$ and 
$\pi_T \maps P \to T$.  The pullback makes this diamond commute:
\[
\xymatrix{
& & P \ar[dl]_{\pi_T} \ar[dr]^{\pi_S} & &\\
& T \ar[dr]_{q} & & S \ar[dl]^{p} & \\
 & & X & & 
}
\]  
and indeed it is the `universal solution' to the problem of finding
such a commutative diamond \cite{Mac Lane}.

To generalize the pullback to groupoids, we need to weaken one
condition.  The data for constructing a weak pullback is a pair of
groupoids equipped with functors to the same groupoid:
\[
\xymatrix{
& T \ar[dr]_{q} & & S \ar[dl]^{p} & \\
 & & X & & 
}
\]  
But now we replace the {\it equation} in the definition of pullback
by a {\it specified isomorphism}.  So, we define the weak pullback 
$P$ to be the groupoid where an object is a triple $(s,t,\alpha)$ 
consisting of an object $s \in S$, an object $t \in T$, and an 
isomorphism $\alpha \maps p(s) \to q(t)$ in $X$.  A morphism
in $P$ from $(s,t,\alpha)$ to $(s',t',\alpha')$ consists of a morphism
$f \maps s \to s'$ in $S$ and a morphism $g \maps t \to t'$ in $T$
such that the following square commutes:
\[
\xymatrix{
p(s) \ar[d]_{p(f)} \ar[r]^{\alpha} & q(t) \ar[d]^{q(g)} \\
p(s') \ar[r]_{\alpha'} & q(t')
}
\]
Note that any set can be regarded as a {\bf discrete} groupoid:
one with only identity morphisms.  For discrete groupoids, the weak
pullback reduces to the ordinary pullback for sets.

Using the weak pullback, we can apply a span from $X$ to $Y$ to a groupoid
over $X$ and get a groupoid over $Y$.  Given a span of groupoids:
\[
\xymatrix{
& S\ar[dl]_{q} \ar[dr]^{p} & \\
Y & & X
}
\]
and a groupoid over $X$:
\[
\xymatrix{
   & \Phi \ar[dl]_{v} \\
   X &  
}
\]
we can take the weak pullback, which we call $S\Phi$:
\[
\xymatrix{
& & S\Phi \ar[dl]_{\pi_S}\ar[dr]^{\pi_{\Phi}} & \\
& S \ar[dl]_{q} \ar[dr]^{p} & & \Phi \ar[dl]_{v} \\
Y & & X &  
}
\]
and think of $S\Phi$ as a groupoid over $Y$:
\[
\xymatrix{
& S\Phi \ar[dl]_{q \pi_S}  \\
Y  
}
\]
This process will determine a linear operator from $\R^{\underline X}$ 
to $\R^{\underline Y}$ if the span $S$ is sufficiently nice:
\begin{definition}
A span
\[
\xymatrix{
& S\ar[dl]_{q} \ar[dr]^{p} & \\
Y & & X
}
\]
is {\bf tame} if $v \maps \Phi \to X$ being tame implies that 
$q\pi_{S} \maps S\Phi \to Y$ is tame.
\end{definition}

\begin{theorem*}
Given a tame span:
\[
\xymatrix{
& S\ar[dl]_{q} \ar[dr]^{p} & \\
Y & & X
}
\]
there exists a unique linear operator
\[ \utilde{S} \maps \R^{\underline{X}} \to \R^{\underline{Y}} \]
such that
\[ \utilde{S}\utilde{\Phi} = \utilde{S\Phi} \]
whenever $\Phi$ is a tame groupoid over $X$.
\end{theorem*}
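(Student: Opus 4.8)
The plan is to write down the matrix of the operator $\utilde{S}$ explicitly, check that this matrix really defines a linear map on \emph{all} of $\R^{\underline{X}}$ (this is the point where tameness of the span enters), verify the equation $\utilde{S}\,\utilde{\Phi} = \utilde{S\Phi}$ by a direct groupoid‑cardinality computation, and derive uniqueness from the fact that the vectors $\utilde{\Phi}$ span $\R^{\underline{X}}$. To guess the matrix: for $x \in X$ let $\mathbf{x}$ be the one‑object groupoid with automorphism group $\Aut(x)$, equipped with the functor $\mathbf{x} \to X$ picking out the object $x$; then $\mathbf{x}$ is tame and a short unwinding of the essential inverse image gives $\utilde{\mathbf{x}} = \frac{1}{|\Aut(x)|}\,\delta_{[x]}$, where $\delta_{[x]} \maps \underline{X} \to \R$ is the standard basis function. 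Hence if $\utilde{S}$ exists and is linear it is forced to be the operator with matrix entries $\utilde{S}_{[y][x]} = |\Aut(x)|\cdot|S_{yx}|$, where $S_{yx}$ is the full subgroupoid of $S$ on those objects $s$ with $p(s)\iso x$ and $q(s)\iso y$ (for a span of sets this is just the number of $s$ with $p(s)=x$, $q(s)=y$, recovering the matrix‑of‑sets picture of the introduction). The job is then: (i) show that for every $\psi \in \R^{\underline{X}}$ the sum $\sum_{[x]}\utilde{S}_{[y][x]}\,\psi([x])$ is absolutely convergent, so that $\utilde{S}$ is a well‑defined, visibly linear, operator; and (ii) show $\utilde{S\Phi}([y]) = \sum_{[x]}\utilde{S}_{[y][x]}\,\utilde{\Phi}([x])$ for every tame groupoid $\Phi$ over $X$.

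Step (ii) is the heart of the argument. One first observes that the essential inverse image $(q\pi_S)^{-1}(y)$, whose cardinality is $\utilde{S\Phi}([y])$ by Definition \ref{degroupoidification_of_vectors}, coincides with the weak pullback of the restriction $p\maps q^{-1}(y)\to X$ along $v\maps\Phi\to X$; this is immediate from the definition of $S\Phi$. So it suffices to prove a general lemma on cardinalities of weak pullbacks: if $P$ is the weak pullback of functors $f\maps A\to X$ and $g\maps B\to X$, then
\[ |P| \;=\; \sum_{[x]\in\underline{X}}\; |\Aut(x)|\cdot|f^{-1}(x)|\cdot|g^{-1}(x)| , \]
both sides read in $[0,\infty]$. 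Applying this with $A=q^{-1}(y)$, $B=\Phi$ gives $f^{-1}(x)=S_{yx}$ and $g^{-1}(x)=v^{-1}(x)$, whence $\utilde{S\Phi}([y]) = \sum_{[x]}|\Aut(x)|\,|S_{yx}|\,|v^{-1}(x)| = \sum_{[x]}\utilde{S}_{[y][x]}\,\utilde{\Phi}([x])$, which is (ii). I would prove the weak‑pullback lemma by grouping the objects $(a,b,\alpha)$ of $P$ according to the isomorphism class $[x]$ of $f(a)\iso g(b)$, passing to skeleta, and using invariance of cardinality under equivalence (Lemma \ref{EQUIVGRPD}) together with the alternative cardinality formula (Lemma \ref{ALTCARD}); the only real content is that the automorphism group of $(a,b,\alpha)$ sits in a short exact sequence built from $\Aut(a)$, $\Aut(b)$ and $\Aut(x)$ in such a way that the extra factor of $|\Aut(x)|$ appears.

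Step (i) rests on a realizability lemma: every function $\psi\maps\underline{X}\to[0,\infty)$ is of the form $\utilde{\Phi}$ for some tame groupoid $\Phi$ over $X$. Indeed every nonnegative real is the cardinality of a tame groupoid --- using a factorial‑base expansion $r=\sum_k a_k/k!$ and a disjoint union of $a_k$ copies of the one‑object groupoid with automorphism group $S_k$ --- so one picks, for each $[x]$, a tame groupoid of cardinality $\psi([x])$ and takes the disjoint union over $\underline{X}$ with the evident functor to $X$. Now let $\psi\in\R^{\underline{X}}$ be arbitrary, write $|\psi|$ for its pointwise absolute value, and pick a tame $\Phi$ over $X$ with $\utilde{\Phi}=|\psi|$. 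Since the span is tame, $S\Phi$ is tame over $Y$, so $|(q\pi_S)^{-1}(y)|<\infty$ for all $[y]$; by the computation of step (ii) this finite number equals $\sum_{[x]}\utilde{S}_{[y][x]}\,|\psi([x])|$. Hence $\sum_{[x]}\utilde{S}_{[y][x]}\,\psi([x])$ converges absolutely for every $\psi$, $\utilde{S}$ is a well‑defined linear operator $\R^{\underline{X}}\to\R^{\underline{Y}}$, and it satisfies the intertwining equation by step (ii). In passing this shows --- given the realizability lemma --- that tameness of the span is \emph{equivalent} to the matrix $\utilde{S}$ acting on all of $\R^{\underline{X}}$, which is the sort of explicit criterion promised for Section \ref{processes}.

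For uniqueness, suppose $\utilde{S}$ and $\utilde{S}'$ are linear operators with $\utilde{S}\,\utilde{\Phi} = \utilde{S\Phi} = \utilde{S}'\,\utilde{\Phi}$ for all tame $\Phi$ over $X$. Given any $\psi\in\R^{\underline{X}}$, write $\psi=\psi_+-\psi_-$ with $\psi_\pm\geq 0$, choose tame $\Phi_\pm$ over $X$ with $\utilde{\Phi_\pm}=\psi_\pm$ (by the realizability lemma), and use linearity of both operators to get $\utilde{S}\psi = \utilde{S}\utilde{\Phi_+} - \utilde{S}\utilde{\Phi_-} = \utilde{S}'\utilde{\Phi_+} - \utilde{S}'\utilde{\Phi_-} = \utilde{S}'\psi$; so $\utilde{S}=\utilde{S}'$. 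The main obstacle in this program is the weak‑pullback cardinality lemma of step (ii) --- the careful bookkeeping of automorphism groups in a weak pullback, and the identification of its essential inverse images with the $S_{yx}$ and $v^{-1}(x)$ --- together with the easier but genuinely needed realizability lemma; everything else reduces to routine manipulation of absolutely convergent series and elementary linear algebra.
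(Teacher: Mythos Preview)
Your argument is correct, and its organization differs from the paper's in a useful way. The paper first argues uniqueness (via the same realizability lemma you use), and then proves well-definedness by showing that any linear relation $\sum_i \alpha_i\,\utilde{\Psi_i}=0$ among tame groupoids over $X$ is carried by $S$ to the relation $\sum_i \alpha_i\,\utilde{S\Psi_i}=0$; to do this it reduces to skeletal, one-object $\Psi_i$ and performs a direct computation (equations \ref{formula1}--\ref{formula2}) that is, in effect, the special case of your weak-pullback cardinality lemma where one leg is a one-object groupoid. The explicit matrix formula is then extracted afterwards, in Theorem \ref{matrix}.

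You instead front-load the matrix: isolate the identity $|P|=\sum_{[x]}|\Aut(x)|\,|f^{-1}(x)|\,|g^{-1}(x)|$ as a lemma, read off the matrix entries $\utilde{S}_{[y][x]}=|\Aut(x)|\,|S_{yx}|$, use realizability plus tameness of the span to show the matrix acts on all of $\R^{\underline X}$, and then verify the intertwining equation as an instance of the lemma. The computational core---counting morphisms out of an object of the weak pullback via Lemma \ref{ALTCARD}---is identical in both proofs. What your packaging buys is that the explicit formula of Theorem \ref{matrix} and the tameness criterion fall out of the existence proof rather than requiring a second pass, and the well-definedness question never arises because you define $\utilde{S}$ by a formula rather than by its effect on $\utilde{\Phi}$'s. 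What the paper's packaging buys is that it makes transparent exactly which linear relations must be preserved, which is the conceptual content of ``$\utilde{S}$ is well defined on vectors, not just on groupoids over $X$''.
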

\begin{proof} This is Theorem \ref{PROCESS1}. \end{proof}

Theorem \ref{matrix} provides an explicit criterion for when a span is
tame.  This theorem also gives an explicit formula for the the
operator corresponding to a tame span $S$ from $X$ to $Y$.  If
$\underline{X}$ and $\underline{Y}$ are finite, then
$\R^{\underline{X}}$ has a basis given by the isomorphism classes
$[x]$ in $X$, and similarly for $\R^{\underline{Y}}$.  With respect to
these bases, the matrix entries of $\utilde{S}$ are given as follows:
\[ 
\utilde{S}_{[y][x]} = 
\sum_{[s]\in\u{p^{-1}(x)}\bigcap \u{q^{-1}(y)} }\frac{|\Aut(x)|}{|\Aut(s)|} \]
where $|\Aut(x)|$ is the set cardinality of the automorphism group of
$x \in X$, and similarly for $|\Aut(s)|$.  Even when $\underline{X}$
and $\underline{Y}$ are not finite, we have the following formula for
$\utilde S$ applied to $\psi \in \R^{\underline X}$:
\[   
(\utilde{S} \psi)([y]) = 
\sum_{[x] \in \u{X}} \;\,
\sum_{[s]\in\u{p^{-1}(x)}\bigcap \u{q^{-1}(y)}}
\frac{|\Aut(x)|}{|\Aut(s)|} \,\, \psi([x]) \,. 
\]

As with vectors, there are groupoidified analogues of addition and scalar
multiplication for operators.  Given two spans from $X$ to $Y$:
\[
\xymatrix{
& S\ar[dl]_{q_S} \ar[dr]^{p_S} & & & T\ar[dl]_{q_T} \ar[dr]^{p_T} & \\
Y & & X & Y & & X
}
\]
we can add them as follows.  By the universal property of the
coproduct we obtain from the right legs of the above 
spans a functor from the disjoint union $S + T$ to $X$.  Similarly, from
the left legs of the above spans, we obtain a functor from $S + T$
to $Y$.  Thus, we obtain a span
\[
\xymatrix{
& S + T\ar[dl] \ar[dr] & \\
Y & & X
}
\]
This addition of spans is compatible with degroupoidification:
\begin{proposition*}
If $S$ and $T$ are tame spans from $X$ to $Y$, then so is $S + T$, and
\[    \utilde{S+T} = \utilde{S} + \utilde{T} .\]
\end{proposition*}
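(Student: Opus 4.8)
The plan is to reduce the proposition to one structural fact about weak pullbacks --- that they convert coproducts of spans into coproducts of groupoids over $Y$ --- together with the uniqueness clause of the span theorem (Theorem \ref{PROCESS1}) and the already-established behaviour of degroupoidification on sums of vectors (a case of Lemma \ref{addvectors}).

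First I would unwind $(S+T)\Phi$ for an arbitrary tame groupoid $v \maps \Phi \to X$. Let $p_{S+T} \maps S+T \to X$ and $q_{S+T} \maps S+T \to Y$ be the functors the coproduct's universal property extracts from the legs of $S$ and $T$; by construction the restriction of $p_{S+T}$ to the summand $S \hookrightarrow S+T$ is exactly $p_S$, and likewise for $T$, and the same for $q_{S+T}$. An object of the weak pullback $(S+T)\Phi$ is a triple $(u,a,\alpha)$ with $u$ an object of $S+T$ --- hence an object of $S$ or of $T$ --- and $\alpha \maps p_{S+T}(u) \to v(a)$; a morphism requires a morphism $u \to u'$ in $S+T$, and there are none between an $S$-object and a $T$-object. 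So $(S+T)\Phi$ is, on the nose, the coproduct $S\Phi + T\Phi$, and since the leg down to $Y$ is $q_{S+T}\pi_{S+T}$, which restricts to $q_S\pi_S$ on $S\Phi$ and to $q_T\pi_T$ on $T\Phi$, this identification is an isomorphism of groupoids over $Y$:
\[ (S+T)\Phi \;\iso\; S\Phi + T\Phi \qquad\text{over } Y. \]

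Tameness of $S+T$ then follows. If $\Phi$ is tame over $X$, then because $S$ and $T$ are tame spans, $S\Phi$ and $T\Phi$ are tame over $Y$; that is, the groupoids $(q_S\pi_S)^{-1}(y)$ and $(q_T\pi_T)^{-1}(y)$ are tame for every $y \in Y$. Under the isomorphism above the essential inverse image of $y$ in $(S+T)\Phi$ is the disjoint union of these two, and groupoid cardinality is additive on disjoint unions, so $(q_{S+T}\pi_{S+T})^{-1}(y)$ is tame for every $y$. Hence $S+T$ is a tame span, and $\utilde{S+T}$ is defined.

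For the operator identity, combine the isomorphism above with the proposition on sums of vectors to get $\utilde{(S+T)\Phi} = \utilde{S\Phi} + \utilde{T\Phi}$ for every tame $\Phi$ over $X$ (here $S\Phi$ and $T\Phi$ are tame over $Y$ by the previous paragraph). Then
\[ \utilde{S+T}\,\utilde{\Phi} \;=\; \utilde{(S+T)\Phi} \;=\; \utilde{S\Phi} + \utilde{T\Phi} \;=\; \utilde{S}\,\utilde{\Phi} + \utilde{T}\,\utilde{\Phi} \;=\; (\utilde{S} + \utilde{T})\,\utilde{\Phi}, \]
so the linear operator $\utilde{S} + \utilde{T}$ satisfies the property that characterizes $\utilde{S+T}$ in Theorem \ref{PROCESS1}; by the uniqueness asserted there, $\utilde{S+T} = \utilde{S} + \utilde{T}$. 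The only step requiring any care is the first one --- verifying that the weak pullback literally distributes over the coproduct and that the resulting identification respects the leg to $Y$ --- but since the weak pullback is built objectwise from $\Ob(S) \sqcup \Ob(T)$ and $S+T$ has no morphisms mixing the summands, this is a routine check rather than a genuine obstacle; everything else is bookkeeping plus the uniqueness already supplied by Theorem \ref{PROCESS1}.
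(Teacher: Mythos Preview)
Your argument is correct, but it takes a different route from the paper's own proof. The paper dispatches this result in one line by invoking the explicit matrix formula of Theorem~\ref{matrix}: since the objects of $p_{S+T}^{-1}(x)\cap q_{S+T}^{-1}(y)$ are just the disjoint union of those for $S$ and those for $T$, both the tameness criterion and the sum defining the matrix entries split additively. Your approach instead stays at the level of the defining property in Theorem~\ref{PROCESS1}: you show $(S+T)\Phi \cong S\Phi + T\Phi$ over $Y$ directly from the construction of the weak pullback, deduce tameness from additivity of groupoid cardinality on the fibres, and then appeal to Lemma~\ref{addvectors} and the uniqueness clause of Theorem~\ref{PROCESS1} to conclude. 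The paper's route is shorter once Theorem~\ref{matrix} is in hand; yours is more structural and does not require ever writing down the explicit formula, which is a cleaner way to see why the result is really a formal consequence of how the operator was characterized in the first place.
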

\begin{proof} This is proved as Proposition \ref{add_spans}.
\end{proof}

We can also multiply a span by a `scalar': that is, a fixed groupoid.
Given a groupoid $\Lambda$ and a span
\[
\xymatrix{
& S\ar[dl]_q \ar[dr]^p & \\
Y & & X
}
\]
we can multiply them to obtain a span
\[
\xymatrix{
& \Lambda \times S \ar[dl]_{q\pi_2} \ar[dr]^{p\pi_2} & \\
Y & & X
}
\]
Again, we have compatibility with degroupoidification:

\begin{proposition*}
Given a tame groupoid $\Lambda$ and a tame span
\[
\xymatrix{
& S\ar[dl] \ar[dr] & \\
Y & & X
}
\]
then $\Lambda \times S$ is tame and
\[\utilde{\Lambda \times S} = |\Lambda| \, \utilde{S}.\]
\end{proposition*}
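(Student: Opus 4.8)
The plan is to reduce everything to the already-established statement for vectors, Proposition~\ref{scalarmult1}, via a single structural observation: for any groupoid $v \maps \Phi \to X$, the weak pullback $(\Lambda \times S)\Phi$ is naturally isomorphic, as a groupoid over $Y$, to $\Lambda \times (S\Phi)$. Once this is in hand, both the tameness of the span $\Lambda \times S$ and the formula $\utilde{\Lambda \times S} = |\Lambda|\,\utilde{S}$ fall out by combining it with Theorem~\ref{PROCESS1} and Proposition~\ref{scalarmult1}.

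First I would establish this isomorphism. Unwinding the definition of the weak pullback, and using that the right leg of $\Lambda \times S$ is $p\pi_2$ (so it ignores the $\Lambda$-coordinate), an object of $(\Lambda \times S)\Phi$ is a quadruple $(\lambda, s, a, \alpha)$ with $\lambda \in \Lambda$, $s \in S$, $a \in \Phi$, and $\alpha \maps p(s) \to v(a)$ an isomorphism in $X$; a morphism is a triple of morphisms $\ell \maps \lambda \to \lambda'$, $f \maps s \to s'$, $g \maps a \to a'$ making the evident square involving $f,g,\alpha,\alpha'$ commute. This is precisely the description of $\Lambda \times (S\Phi)$, and under the resulting bijections of objects and morphisms the two functors down to $Y$ — namely $q\pi_S$ composed with the appropriate projection on each side — agree. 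So we obtain an isomorphism of groupoids over $Y$. (One could settle for an equivalence here, since $\utilde{-}$ is invariant under equivalence over a fixed base by Lemma~\ref{EQUIVGRPD} together with the surrounding discussion, but in this case the identification is on the nose.)

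Next I would check tameness. Suppose $v \maps \Phi \to X$ is tame. Since $S$ is a tame span, $q\pi_S \maps S\Phi \to Y$ is tame, i.e.\ $(S\Phi)^{-1}(y)$ is tame for every $y \in Y$. By the isomorphism above, the essential inverse image of $y$ under the map $(\Lambda \times S)\Phi \to Y$ is $\Lambda \times (S\Phi)^{-1}(y)$, whose cardinality is $|\Lambda| \cdot |(S\Phi)^{-1}(y)|$; this is finite because $\Lambda$ is tame. Hence $(\Lambda \times S)\Phi$ is tame over $Y$, so $\Lambda \times S$ is a tame span and Theorem~\ref{PROCESS1} supplies the operator $\utilde{\Lambda \times S}$.

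Finally, the formula. For every tame groupoid $\Phi$ over $X$,
\[
\utilde{\Lambda \times S}\,\utilde{\Phi}
= \utilde{(\Lambda \times S)\Phi}
= \utilde{\Lambda \times (S\Phi)}
= |\Lambda|\,\utilde{S\Phi}
= |\Lambda|\,\utilde{S}\,\utilde{\Phi},
\]
using in turn the defining property of $\utilde{\Lambda \times S}$ from Theorem~\ref{PROCESS1}, the isomorphism of groupoids over $Y$, Proposition~\ref{scalarmult1}, and the defining property of $\utilde{S}$. Since $|\Lambda|\,\utilde{S}$ is a linear operator from $\R^{\underline{X}}$ to $\R^{\underline{Y}}$ that agrees with $\utilde{\Lambda \times S}$ on every vector of the form $\utilde{\Phi}$, the uniqueness clause of Theorem~\ref{PROCESS1} forces $\utilde{\Lambda \times S} = |\Lambda|\,\utilde{S}$. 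The only step requiring genuine care is the first one — verifying that the weak pullback really does factor as a product on the nose, with matching projections to $Y$ — which is a routine but slightly fiddly diagram chase; everything after that is bookkeeping on top of results already proved.
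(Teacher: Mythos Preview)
Your proof is correct. The paper's own proof is a one-line ``This follows from Lemma~\ref{essinverseproduct}'', leaving the reader to unpack how that lemma (about essential inverse images of a product functor $c \times p$) yields the operator identity; presumably one plugs into the explicit matrix formula of Theorem~\ref{matrix} and sees the factor $|\Lambda|$ emerge from the automorphism counts. You instead take a slightly more structural route: you first establish the isomorphism $(\Lambda \times S)\Phi \cong \Lambda \times (S\Phi)$ of groupoids over $Y$, and then invoke Proposition~\ref{scalarmult1} (the vector case, itself proved via Lemma~\ref{essinverseproduct}) together with the defining property and uniqueness clause of Theorem~\ref{PROCESS1}. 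The underlying computational content is identical---both arguments bottom out in the fact that cardinalities multiply over products---but your organization has the virtue of cleanly reducing the span case to the already-proved vector case rather than redoing the cardinality calculation. Either approach is fine; yours is a bit more self-contained given what has been established up to this point.
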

\begin{proof} This is proved as Proposition \ref{scalarmult2}.
\end{proof}

Next we turn to the all-important process of {\it composing} spans.
This is the groupoidified analogue of matrix multiplication.  Suppose
we have a span from $X$ to $Y$ and a span from $Y$ to $Z$:
\[
\xymatrix{
& T\ar[dl]_{q_T} \ar[dr]^{p_T} & & S \ar[dl]_{q_S} \ar[dr]^{p_S}& \\
Z & & Y & & X
}
\]  
Then we say these spans are {\bf composable}.
In this case we can form a weak pullback in the middle:
\[
\xymatrix{
& & TS \ar[dl]_{\pi_T} \ar[dr]^{\pi_S} & &\\
& T\ar[dl]_{q_T} \ar[dr]^{p_T} & & S \ar[dl]_{q_S} \ar[dr]^{p_S}& \\
Z & & Y & & X
}
\]  
which gives a span from $X$ to $Z$:
\[\xymatrix{
 & TS \ar[dl]_{q_T \pi_T} \ar[dr]^{p_S\pi_S} & \\
 Z & & X \\
}\]
called the {\bf composite} $TS$.

When all the groupoids involved are discrete, the spans $S$ and $T$
are just matrices of sets, as explained in Section \ref{intro}.  We urge
the reader to check that in this case, the process of composing spans
is really just matrix multiplication, with cartesian product of sets
taking the place of multiplication of numbers, and disjoint union of
sets taking the place of addition:
\[    (TS)_{ki} = \coprod_{j \in Y} T_{kj} \times S_{ji} . \]
So, composing spans of groupoids is a generalization of matrix
multiplication.

Indeed, degroupoidification takes composition of tame 
spans to composition of linear operators:

\begin{proposition*} If $S$ and $T$ are composable tame spans:
\[
\xymatrix{
& T\ar[dl]_{q_T} \ar[dr]^{p_T} & & S \ar[dl]_{q_S} \ar[dr]^{p_S}& \\
Z & & Y & & X
}
\]  
then the composite span
\[\xymatrix{
 & TS \ar[dl]_{q_T \pi_T} \ar[dr]^{p_S\pi_S} & \\
 Z & & X \\
}\]
is also tame, and
\[       \utilde{TS} = \utilde{T} \utilde{S}  .\]
\end{proposition*}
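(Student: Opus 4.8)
The plan is to reduce everything to one coherence property of the weak pullback: that applying the composite span $TS$ to a groupoid over $X$ gives, up to equivalence of groupoids over $Z$, the same result as applying $S$ and then $T$. Precisely, I would first show that for every groupoid $v \maps \Phi \to X$ there is an equivalence $(TS)\Phi \iso T(S\Phi)$ of groupoids over $Z$. To produce it, I unwind the definition of the weak pullback twice on each side. An object of $(TS)\Phi$ consists of an object $(t,s,\gamma)$ of the weak pullback $TS$ --- so $t \in T$, $s \in S$, and $\gamma$ an isomorphism $p_T(t) \to q_S(s)$ in $Y$ --- together with an object $\phi \in \Phi$ and an isomorphism $\delta \maps p_S(s) \to v(\phi)$ in $X$; a morphism is a triple of morphisms, one each in $T$, $S$, $\Phi$, making the two evident squares commute. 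An object of $T(S\Phi)$ consists of $t \in T$, an object $(s,\phi,\alpha)$ of $S\Phi$, and an isomorphism $\beta \maps p_T(t) \to q_S(s)$ in $Y$; a morphism is again a compatible triple of morphisms in $T$, $S$, $\Phi$. Reading off these descriptions, the assignment $((t,s,\gamma),\phi,\delta) \mapsto (t,(s,\phi,\delta),\gamma)$ is an isomorphism on objects that extends to an isomorphism of groupoids, and it commutes strictly with the two structure maps down to $Z$, each of which sends the object in question to $q_T(t)$. This ``associativity of the weak pullback'' is really the statement that the weak pullback is a $2$-limit; I would record it once and for all as a lemma in Appendix \ref{appendix}, where weak pullbacks are developed, and then simply cite it here.

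Granting this equivalence, the rest is short. For tameness: if $\Phi$ is a tame groupoid over $X$, then $S\Phi$ is a tame groupoid over $Y$ because $S$ is a tame span, and then $T(S\Phi)$ is a tame groupoid over $Z$ because $T$ is a tame span. Since $(TS)\Phi \iso T(S\Phi)$ over $Z$, the essential inverse image of each object $z \in Z$ is equivalent on the two sides, hence has the same cardinality by Lemma \ref{EQUIVGRPD}; so $(TS)\Phi$ is a tame groupoid over $Z$, and moreover $\utilde{(TS)\Phi} = \utilde{T(S\Phi)}$ in $\R^{\underline{Z}}$. As $\Phi$ ranges over all tame groupoids over $X$, this says exactly that the span $TS$ is tame, so by Theorem \ref{PROCESS1} the operator $\utilde{TS} \maps \R^{\underline{X}} \to \R^{\underline{Z}}$ exists.

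For the identity $\utilde{TS} = \utilde{T}\,\utilde{S}$, I would then compute, for every tame groupoid $\Phi$ over $X$,
\[ \utilde{TS}\,\utilde{\Phi} \;=\; \utilde{(TS)\Phi} \;=\; \utilde{T(S\Phi)} \;=\; \utilde{T}\,\utilde{S\Phi} \;=\; \utilde{T}\,\utilde{S}\,\utilde{\Phi}, \]
using in turn the defining property of $\utilde{TS}$, the equality of vectors from the previous step, the defining property of $\utilde{T}$ applied to the tame groupoid $S\Phi$ over $Y$, and the defining property of $\utilde{S}$. Thus $\utilde{T}\,\utilde{S}$ --- a linear operator, being a composite of linear operators --- satisfies the property that characterizes $\utilde{TS}$ in Theorem \ref{PROCESS1}, so by the uniqueness clause of that theorem, $\utilde{TS} = \utilde{T}\,\utilde{S}$.

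I expect the one genuinely delicate point to be the equivalence $(TS)\Phi \iso T(S\Phi)$ \emph{over} $Z$. The comparison functor is easy to write down, as above; the real work is the bookkeeping of the isomorphism data carried by the nested weak pullbacks (the various $\alpha$, $\beta$, $\gamma$, $\delta$, and in particular the isomorphism in $Y$ that one carries along when forming $TS$) together with the verification that the comparison respects the projections to $Z$ and to $X$, so that it is an equivalence of spans from $X$ to $Z$ and not merely of bare groupoids. This should go through exactly as the analogous statement for iterated ordinary pullbacks of sets, and I would isolate it as a self-contained lemma in the appendix and invoke it here, keeping the body of this proof to the short computation above plus the tameness bookkeeping. (When $\underline{X}$, $\underline{Y}$, $\underline{Z}$ happen to be finite one could instead check $\utilde{TS} = \utilde{T}\,\utilde{S}$ by comparing matrix entries via the explicit formula of Theorem \ref{matrix}, but the weak-pullback argument avoids that finiteness hypothesis.)
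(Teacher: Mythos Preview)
Your approach is essentially the paper's. The paper isolates the associativity of weak pullbacks as a separate Proposition \ref{associativity} about triple span composition $T(SR)\simeq(TS)R$, then in Lemma \ref{composition} specializes it by viewing a groupoid $\Psi$ over $X$ as a span from the terminal groupoid $1$ to $X$; this yields $(TS)\Psi\simeq T(S\Psi)$ as spans from $1$ to $Z$, i.e., as groupoids over $Z$, exactly the equivalence you write down directly. Your write-up is in fact a bit more careful than the paper's: you explicitly derive tameness of $TS$ from this equivalence (the paper's Lemma \ref{composition} silently assumes it), and you close by invoking the uniqueness clause of Theorem \ref{PROCESS1}, which makes the final step cleaner than the paper's one-line ``Thus, composition is preserved.''
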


\begin{proof} This is proved as Lemma \ref{composition}.
\end{proof}

Besides addition and scalar multiplication, there 
is an extra operation for groupoids over a groupoid $X$, which
is the reason groupoidification is connected to quantum
mechanics.  Namely, we can take their inner product:

\begin{definition}
Given groupoids $\Phi$ and $\Psi$ over $X$, we define
the {\bf inner product} $\ip{\Phi}{\Psi}$ to be this weak pullback:
\[
\xymatrix{
& \ip{\Phi}{\Psi} \ar[dl] \ar[dr] & \\
\Phi \ar[dr] & & \Psi \ar[dl] \\
& X &
}
\]
\end{definition}
\begin{definition}
\label{L2}
A groupoid $\Psi$ over $X$ is called {\bf square-integrable} if 
$\ip{\Psi}{\Psi}$ is tame.  We define $L^2(X)$ to be the 
subspace of $\R^{\underline{X}}$ consisting of finite
real linear combinations of vectors $\utilde{\Psi}$ where 
$\Psi$ is square-integrable.
\end{definition}

Note that $L^2(X)$ is all of $\R^{\underline{X}}$ when
$\underline{X}$ is finite.  The inner product of groupoids over $X$ 
makes $L^2(X)$ into a real Hilbert space:
\begin{theorem*}
Given a groupoid $X$, there is a unique inner product $\ip{\cdot}{\cdot}$ 
on the vector space $L^2(X)$ such that
\[ \ip{\utilde{\Phi}}{\utilde{\Psi}} = |\ip{\Phi}{\Psi}| \]
whenever $\Phi$ and $\Psi$ are square-integrable groupoids over $X$.
With this inner product $L^2(X)$ is a real Hilbert space.
\end{theorem*}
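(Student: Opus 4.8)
The plan is to identify $L^2(X)$ explicitly with a weighted $\ell^2$-space and read everything off from that. For a function $\psi\maps\underline{X}\to\R$ put $\|\psi\|^2:=\sum_{[x]\in\underline{X}}|\Aut(x)|\,\psi([x])^2$ (a sum with positive weights, since every $\Aut(x)$ is finite), and let $H$ be the space of those $\psi$ with $\|\psi\|<\infty$, with the bilinear form $\langle\psi,\phi\rangle:=\sum_{[x]}|\Aut(x)|\,\psi([x])\,\phi([x])$. This $H$ is a real Hilbert space: it is isometric to ordinary $\ell^2(\underline{X})$ via $\psi\mapsto\bigl(\sqrt{|\Aut(x)|}\,\psi([x])\bigr)_{[x]}$, and the form is manifestly symmetric, bilinear, finite on $H$ (Cauchy--Schwarz), and positive definite. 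I claim that $L^2(X)=H$ and that $\langle\cdot,\cdot\rangle$ above is the asserted inner product.

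First I would prove the basic computation: for tame groupoids $v\maps\Phi\to X$ and $w\maps\Psi\to X$,
\[
\bigl|\langle\Phi,\Psi\rangle\bigr|\;=\;\sum_{[x]\in\underline{X}}|\Aut(x)|\,\utilde{\Phi}([x])\,\utilde{\Psi}([x])
\]
as an identity in $[0,\infty]$. The isomorphism class of $v$ applied to the first component of an object $(a,b,\alpha)$ of $\langle\Phi,\Psi\rangle$ is locally constant, so $\langle\Phi,\Psi\rangle$ is the coproduct over $[x]\in\underline{X}$ of its full subgroupoid $\langle\Phi,\Psi\rangle_x$ on the objects with $v(a)\cong x$; by additivity of groupoid cardinality under coproducts it is enough to evaluate each term. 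The functor $\langle\Phi,\Psi\rangle_x\to v^{-1}(x)\times w^{-1}(x)$, $(a,b,\alpha)\mapsto(a,b)$, is a discrete opfibration whose fibre over every object is the set of isomorphisms $v(a)\to w(b)$ in $X$, a set of cardinality $|\Aut(x)|$; and the cardinality of a discrete opfibration over a groupoid is the weighted sum of fibre sizes over isomorphism classes of the base (each component over $[b]$ is an action groupoid $(\text{fibre})/\!/\Aut(b)$). Hence $|\langle\Phi,\Psi\rangle_x|=|\Aut(x)|\cdot|v^{-1}(x)\times w^{-1}(x)|=|\Aut(x)|\,\utilde{\Phi}([x])\,\utilde{\Psi}([x])$, using $|A\times B|=|A|\,|B|$ and Definition \ref{degroupoidification_of_vectors}. (Both the coproduct additivity and the opfibration count belong to the elementary lemmas of Appendix \ref{appendix}.)

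Taking $\Phi=\Psi$ in this formula gives $\|\utilde{\Psi}\|^2=|\langle\Psi,\Psi\rangle|$, so $\Psi$ being square-integrable means precisely that $\utilde{\Psi}\in H$ (in particular each $v^{-1}(x)$ is then tame, so $\utilde{\Psi}$ is defined). As $H$ is a linear subspace of $\R^{\underline{X}}$, every finite real combination of such vectors lies in $H$, so $L^2(X)\subseteq H$. For the reverse inclusion, given $\psi\in H$ write $\psi=\psi^+-\psi^-$ with $0\le\psi^\pm\le|\psi|$, so $\psi^\pm\in H$; thus it suffices to realize an arbitrary nonnegative $\psi\in H$ as $\utilde{\Psi}$ for a \emph{single} square-integrable $\Psi$. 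For each $[x]$ pick positive integers $n^{[x]}_1,n^{[x]}_2,\dots$ with $\sum_i 1/n^{[x]}_i=\psi([x])$ (every nonnegative real is such a sum: take $\lfloor\psi([x])\rfloor$ copies of $1$ and a greedy Egyptian-fraction expansion of the remainder) and set $\Psi:=\coprod_{[x]}\coprod_i B(\Z/n^{[x]}_i)$, with $v$ sending each summand $B(\Z/n^{[x]}_i)$ to $x$ by the trivial homomorphism. There are no morphisms between distinct summands, so $v^{-1}(x)=\coprod_i B(\Z/n^{[x]}_i)$ has cardinality $\psi([x])$, giving $\utilde{\Psi}=\psi$; and $|\langle\Psi,\Psi\rangle|=\|\psi\|^2<\infty$, so $\Psi$ is square-integrable. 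Hence $H\subseteq L^2(X)$, so $L^2(X)=H$ is a real Hilbert space. Finally, the Step-one formula is exactly $\langle\utilde{\Phi},\utilde{\Psi}\rangle=|\langle\Phi,\Psi\rangle|$ for square-integrable $\Phi,\Psi$ (both sides finite), which gives existence; uniqueness is automatic, since the vectors $\utilde{\Psi}$ span $L^2(X)$ and a bilinear form is determined by its values on a spanning set.

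The substantive step is the weak-pullback cardinality formula, and specifically pinning down the weight $|\Aut(x)|$: it records the gap between the essential fibre $v^{-1}(x)$ (whose cardinality is $\utilde{\Psi}([x])$) and the homotopy fibre of $v$ over $x$, and this is where a careless computation would go wrong. The only other mildly unexpected point is that $L^2(X)$, although defined via \emph{finite} linear combinations, is already complete; this works because square-integrable groupoids over $X$ are flexible enough --- essential fibres may have any nonnegative real cardinality --- to hit every element of $H$ in a single stroke. Everything else is formal.
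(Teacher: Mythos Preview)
Your argument is correct and considerably more explicit than the paper's. The paper's proof of Theorem~\ref{innerprod_theorem} is a sketch: it argues well-definedness of the pairing by asserting that the verification ``closely resembles the proof of existence in Theorem~\ref{PROCESS1}'', and it leaves completeness of $L^2(X)$ as an exercise. By contrast, you compute the closed formula
\[
|\langle\Phi,\Psi\rangle| \;=\; \sum_{[x]\in\underline{X}} |\Aut(x)|\,\utilde{\Phi}([x])\,\utilde{\Psi}([x])
\]
via the discrete-opfibration structure of $(a,b,\alpha)\mapsto(a,b)$, and then identify $L^2(X)$ with the weighted space $\ell^2(\underline{X},|\Aut(\cdot)|)$. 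This buys you well-definedness, symmetry, positive-definiteness, and completeness in one stroke, and it makes transparent the $|\Aut(x)|$ weight that the paper only exhibits in the special case $X=E$ (the Fock inner product $\sum n!\,\overline{a}_n b_n$). Your proof that $L^2(X)$ is \emph{all} of the weighted $\ell^2$ --- realizing an arbitrary nonnegative $\psi$ with finite weighted norm by a single square-integrable groupoid via unit-fraction expansions of the values $\psi([x])$ --- is a genuine addition; the paper never addresses this, and without it completeness of a space defined through \emph{finite} linear combinations would be unclear.

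One small correction: your parenthetical that ``the opfibration count belong[s] to the elementary lemmas of Appendix~\ref{appendix}'' is not accurate --- no such lemma appears there. Your inline justification (each component over $[(a,b)]$ is an action groupoid $F/\!/\Aut(a,b)$, with $|F/\!/G|=|F|/|G|$ from Section~\ref{intro}) is sufficient on its own, so simply drop the appendix reference.
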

\begin{proof} This is proven later as Theorem \ref{innerprod_theorem}. 
\end{proof}
We can always complexify $L^2(X)$ and obtain a complex
Hilbert space.  We work with real coefficients simply to admit that
groupoidification as described here does not make essential use of the
complex numbers.  Morton's generalization involving groupoids over
$\U(1)$ is one way to address this issue \cite{Morton:2006}.

The inner product of groupoids over $X$ has the properties one 
would expect:
\begin{proposition*}
Given a groupoid $\Lambda$ and square-integrable
groupoids $\Phi$, $\Psi$, and $\Psi'$ over $X$, we have the
following equivalences of groupoids:
\begin{enumerate}
\item
\[\ip{\Phi}{\Psi} \simeq \ip{\Psi}{\Phi}.\] 
\item
\[\ip{\Phi}{\Psi + \Psi'} \simeq \ip{\Phi}{\Psi} + \ip{\Phi}{\Psi'}.\] 
\item
\[\ip{\Phi}{\Lambda \times \Psi} \simeq \Lambda \times \ip{\Phi}{\Psi}.\]
\end{enumerate}
\end{proposition*}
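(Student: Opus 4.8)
The plan is to prove something slightly stronger than stated: in each of the three cases I would write down an explicit \emph{isomorphism} of groupoids, which is in particular an equivalence. Fix notation by writing $v_\Phi \maps \Phi \to X$, $v_\Psi \maps \Psi \to X$, and so on for the structure functors. Unwinding the definition of the weak pullback, an object of $\ip{\Phi}{\Psi}$ is a triple $(a,b,\alpha)$ with $a$ an object of $\Phi$, $b$ an object of $\Psi$, and $\alpha \maps v_\Phi(a) \to v_\Psi(b)$ an isomorphism in $X$; a morphism $(a,b,\alpha) \to (a',b',\alpha')$ is a pair $(f,g)$ with $f \maps a \to a'$ in $\Phi$ and $g \maps b \to b'$ in $\Psi$, subject to the commuting square $\alpha' \circ v_\Phi(f) = v_\Psi(g) \circ \alpha$. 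In each case below the isomorphism I produce will also be compatible with the projections down to $\Phi$, $\Psi$, and $X$, although the statement as given asks only for an equivalence of bare groupoids.

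For part~(1), I would define $F \maps \ip{\Phi}{\Psi} \to \ip{\Psi}{\Phi}$ by $F(a,b,\alpha) = (b,a,\alpha^{-1})$ on objects and $F(f,g) = (g,f)$ on morphisms. The one thing to check is that $F$ is well defined on morphisms: composing the square $\alpha' \circ v_\Phi(f) = v_\Psi(g) \circ \alpha$ with $(\alpha')^{-1}$ on the left and $\alpha^{-1}$ on the right yields $v_\Phi(f) \circ \alpha^{-1} = (\alpha')^{-1} \circ v_\Psi(g)$, which is exactly the square required for $(g,f)$ to be a morphism of $\ip{\Psi}{\Phi}$. Functoriality is immediate, and the same recipe in the opposite direction is a strict two-sided inverse, so $F$ is an isomorphism of groupoids; hence $\ip{\Phi}{\Psi} \simeq \ip{\Psi}{\Phi}$.

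Parts~(2) and~(3) are even more direct. For~(2), $\Psi + \Psi'$ is the disjoint union equipped with the functor restricting to $v_\Psi$ on $\Psi$ and to $v_{\Psi'}$ on $\Psi'$, with no morphisms between the two summands; so in any object $(a,b,\alpha)$ of $\ip{\Phi}{\Psi + \Psi'}$ the object $b$ lies in exactly one summand, and every morphism stays within one summand. Thus $\ip{\Phi}{\Psi + \Psi'}$ is literally the disjoint union of its full subgroupoid on triples with $b$ in $\Psi$, which is $\ip{\Phi}{\Psi}$, and the one with $b$ in $\Psi'$, which is $\ip{\Phi}{\Psi'}$; this is the claimed isomorphism $\ip{\Phi}{\Psi + \Psi'} \iso \ip{\Phi}{\Psi} + \ip{\Phi}{\Psi'}$. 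For~(3), $\Lambda \times \Psi$ is a groupoid over $X$ via $v_\Psi \pi_2$, so an object of $\ip{\Phi}{\Lambda \times \Psi}$ is a triple $(a,(\ell,b),\alpha)$ with $\alpha \maps v_\Phi(a) \to v_\Psi(b)$ --- precisely the data of an object $(\ell,(a,b,\alpha))$ of $\Lambda \times \ip{\Phi}{\Psi}$ --- and the identical reshuffling matches the morphisms on the two sides. So $(a,(\ell,b),\alpha) \mapsto (\ell,(a,b,\alpha))$ is an isomorphism $\ip{\Phi}{\Lambda \times \Psi} \iso \Lambda \times \ip{\Phi}{\Psi}$.

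I do not expect a real obstacle here: each part collapses to unpacking the weak pullback and performing a one-line diagram chase. The only step that needs a moment of care is the verification in part~(1) that inverting $\alpha$ converts the coherence square of $\ip{\Phi}{\Psi}$ into the one demanded by $\ip{\Psi}{\Phi}$; everything else is bookkeeping. If one also wants these equivalences to be natural in the groupoids $\Phi$, $\Psi$, $\Psi'$, and $\Lambda$ (useful when invoking them in the proof of the Hilbert space theorem), that follows by the same inspection, since the formulas above are manifestly functorial in all arguments.
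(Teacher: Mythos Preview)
Your proposal is correct and follows essentially the same approach as the paper: unwind the definition of the weak pullback and exhibit the obvious bijection in each case. The paper's own proof is terser --- for part~(1) it simply notes that $\alpha$ is invertible and calls the equivalence ``evident,'' for part~(2) it says the weak pullback ``splits'' over disjoint union, and for part~(3) it invokes associativity of the cartesian product --- so your version is a more explicit rendering of the same argument, including the coherence-square check in part~(1) that the paper leaves to the reader.
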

\begin{proof} Here equivalence of groupoids is defined in the usual
way --- see Definition \ref{equivalence_of_groupoids}.  This result is
proved below as Proposition
\ref{innerprodandadjointprops}. \end{proof}

Finally, just as we can define the adjoint of an operator
between Hilbert spaces, we can define the adjoint of a span
of groupoids:
\begin{definition}
Given a span of groupoids from $X$ to $Y$:
\[
\xymatrix{
& S\ar[dl]_{q} \ar[dr]^{p} & \\
Y & & X
}
\]
its {\bf adjoint} $S^{\dagger}$ is the following span of
groupoids from $Y$ to $X$:
\[
\xymatrix{
& S\ar[dl]_{p} \ar[dr]^{q} & \\
X & & Y
}
\]
\end{definition}

We warn the reader that the adjoint of a tame span may not be
tame, due to an asymmetry in the criterion for tameness,
Theorem \ref{matrix}.  However, we have:

\begin{proposition*}
Given a span
\[
\xymatrix{
& S \ar[dl]_{q} \ar[dr]^{p} & \\
Y & & X
}
\]
and a pair $v \maps \Psi \to X$, $w \maps \Phi \to Y$ of groupoids
over $X$ and $Y$, respectively, there is an equivalence of groupoids
\[ \langle \Phi,S\Psi \rangle 
\simeq \langle S^\dagger\Phi,\Psi \rangle. \]
\end{proposition*}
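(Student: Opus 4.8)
The plan is to prove the (slightly stronger) claim that the two groupoids are \emph{isomorphic}, by simply unwinding the definitions of the weak pullback and of the inner product on each side; the point will be that both sides are described by literally the same data, recorded in a different order. Since an isomorphism of groupoids is in particular an equivalence, this suffices.

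First I would unpack the left-hand side. By definition $S\Psi$ is the weak pullback of $p \maps S \to X$ and $v \maps \Psi \to X$, viewed as a groupoid over $Y$ via $q\pi_S$; so an object of $S\Psi$ is a triple $(s,a,\alpha)$ with $s \in S$, $a \in \Psi$, and $\alpha$ an isomorphism in $X$ between $p(s)$ and $v(a)$, sitting over $q(s)\in Y$. Taking the inner product with $w \maps \Phi \to Y$ --- i.e.\ the weak pullback over $Y$ of $w$ and $q\pi_S$ --- an object of $\langle \Phi, S\Psi\rangle$ is therefore a quintuple $(\phi,s,a,\alpha,\beta)$ with $\phi \in \Phi$, $(s,a,\alpha)$ as above, and $\beta$ an isomorphism in $Y$ between $q(s)$ and $w(\phi)$. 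Tracing the two weak-pullback definitions through, a morphism $(\phi,s,a,\alpha,\beta)\to(\phi',s',a',\alpha',\beta')$ is a triple $h\maps\phi\to\phi'$, $f\maps s\to s'$, $g\maps a\to a'$ for which the square relating $\alpha,\alpha'$ through $p(f),v(g)$ commutes in $X$ and the square relating $\beta,\beta'$ through $q(f),w(h)$ commutes in $Y$.

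Next I would do the same for $\langle S^\dagger\Phi, \Psi\rangle$. Since $S^\dagger$ is the span with the legs $p$ and $q$ interchanged, $S^\dagger\Phi$ is the weak pullback over $Y$ of $q\maps S\to Y$ and $w\maps\Phi\to Y$, viewed as a groupoid over $X$ via $p\pi_S$; its objects are triples $(s,\phi,\gamma)$ with $\gamma$ an isomorphism in $Y$ between $w(\phi)$ and $q(s)$. Taking the inner product with $v\maps\Psi\to X$ over $X$, an object of $\langle S^\dagger\Phi,\Psi\rangle$ is a quintuple $(s,\phi,\gamma,a,\alpha)$ with $\alpha$ an isomorphism in $X$ between $v(a)$ and $p(s)$, and a morphism is again a triple $(f,h,g)$ making a square in $Y$ (for $\gamma,\gamma'$ through $q(f),w(h)$) and a square in $X$ (for $\alpha,\alpha'$ through $p(f),v(g)$) commute. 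Comparing, the two lists of data agree after reordering the components and replacing the connecting isomorphism living in $Y$ by its inverse ($\gamma = \beta^{-1}$); under this dictionary the two commuting-square conditions on each side turn into each other (the $X$-square is identical; the $Y$-square goes over to itself upon inverting $\beta$ and $\beta'$).

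So I would define the functor sending $(\phi,s,a,\alpha,\beta)\mapsto(s,\phi,\beta^{-1},a,\alpha)$ on objects and $(h,f,g)\mapsto(f,h,g)$ on morphisms, check (trivially) that it respects identities and composition --- composition in both weak pullbacks is computed componentwise --- and observe that it is bijective on objects and morphisms, hence an isomorphism of groupoids, hence an equivalence. I do not expect a genuine obstacle: the whole content is this bookkeeping, together with the orientation subtlety just noted, which is harmless precisely because we are working in groupoids. (A more conceptual phrasing, if one prefers: both $\langle\Phi,S\Psi\rangle$ and $\langle S^\dagger\Phi,\Psi\rangle$ are, up to canonical equivalence, the iterated weak pullback of the diagram $\Phi \xrightarrow{w} Y \xleftarrow{q} S \xrightarrow{p} X \xleftarrow{v} \Psi$, so the statement is an instance of the associativity and commutativity of the weak pullback, which may be taken from the appendix.)
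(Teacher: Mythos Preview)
Your argument is correct. The direct unwinding you give does produce an isomorphism of groupoids, and the orientation issue you flag (inverting the $Y$-component isomorphism) is the only point requiring care.

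The paper takes a different route: it regards $\Psi$ and $\Phi$ as spans with one leg to the terminal groupoid $1$, so that $\langle \Phi, S\Psi\rangle$ and $\langle S^\dagger\Phi, \Psi\rangle$ become the two bracketings $\Phi(S\Psi)$ and $(\Phi S)\Psi$ of a triple composite of spans, and then invokes the already-proved associativity of span composition. This is precisely the ``more conceptual phrasing'' you offer parenthetically at the end --- though note that the relevant associativity statement lives in the body of the paper (as the proposition on associativity of weak-pullback composition), not in the appendix. Your direct approach is self-contained and yields the marginally stronger conclusion of an isomorphism; the paper's approach recycles existing machinery and makes the structural reason (associativity) explicit rather than leaving it as a remark.
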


\begin{proof} 
This is proven as Proposition \ref{innerprod_adjoint}.
\end{proof}

We say what it means for spans to be `equivalent' in Definition
\ref{EQUIVALENCE}.  Equivalent tame spans give the same linear
operator: $S \simeq T$ implies $\utilde{S} = \utilde{T}$.  Spans of
groupoids obey many of the basic laws of linear algebra --- up to
equivalence.  For example, we have these familiar properties of
adjoints:

\begin{proposition*}
Given spans
\[
\xymatrix{
& T \ar[dl]_{q_T} \ar[dr]^{p_T} & & & S \ar[dl]_{q_S} \ar[dr]^{p_S} & \\
Z & & Y & Y & & X
}
\]
and a groupoid $\Lambda$, we have the following equivalences of 
spans:
\begin{enumerate}
\item $(TS)^\dagger \simeq S^\dagger T^\dagger$
\item $(S+T)^\dagger \simeq S^\dagger + T^\dagger$
\item $(\Lambda S)^\dagger \simeq \Lambda S^\dagger$
\end{enumerate}
\end{proposition*}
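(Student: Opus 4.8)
The plan is to unwind, in each of the three cases, the definitions of the dagger (which merely interchanges the two legs of a span), of the composite, of the coproduct of spans, and of the product of a span with a groupoid. In every case this produces not just an equivalence but an honest isomorphism of spans; since an isomorphism of spans is \emph{a fortiori} an equivalence in the sense of Definition \ref{EQUIVALENCE}, that will suffice.

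Claims (2) and (3) are pure bookkeeping and need no tameness hypotheses. For (2) I would read $S$ and $T$ as parallel spans from $X$ to $Y$. The span $S+T$ has underlying groupoid $S+T$, with the functor to $X$ obtained from $p_S$ and $p_T$ by the universal property of the coproduct and the functor to $Y$ obtained likewise from $q_S$ and $q_T$; hence $(S+T)^\dagger$ is the span on $S+T$ with leg $[p_S,p_T]$ to $X$ and leg $[q_S,q_T]$ to $Y$. On the other hand $S^\dagger+T^\dagger$ is the coproduct of $S^\dagger$ (leg $p_S$ to $X$, leg $q_S$ to $Y$) and $T^\dagger$ (leg $p_T$ to $X$, leg $q_T$ to $Y$), which by the same universal property is exactly that same span. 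Similarly, both $(\Lambda S)^\dagger$ and $\Lambda S^\dagger$ turn out to be the span with underlying groupoid $\Lambda\times S$, leg $p_S\pi_2$ to $X$ and leg $q_S\pi_2$ to $Y$, so they agree on the nose.

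Claim (1) is the one with real content, and I would handle it by comparing the two weak pullbacks directly. Unwinding the definition of the composite, $(TS)^\dagger$ is the span from $Z$ to $X$ whose underlying groupoid has objects the triples $(s,t,\alpha)$ with $s\in S$, $t\in T$, and $\alpha\maps q_S(s)\to p_T(t)$ an isomorphism in $Y$, whose morphisms $(s,t,\alpha)\to(s',t',\alpha')$ are pairs $(f\maps s\to s',\,g\maps t\to t')$ satisfying $\alpha'\,q_S(f)=p_T(g)\,\alpha$, and whose legs are $p_S$ composed with the projection to $S$ (to $X$) and $q_T$ composed with the projection to $T$ (to $Z$). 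Unwinding the same definitions, $S^\dagger T^\dagger$ --- the composite of $T^\dagger\maps Z\to Y$ followed by $S^\dagger\maps Y\to X$ --- is the weak pullback of the very same two functors $p_T\maps T\to Y$ and $q_S\maps S\to Y$, but taken with the opposite orientation: its objects are triples $(t,s,\beta)$ with $\beta\maps p_T(t)\to q_S(s)$, its morphisms are pairs $(g,f)$ with $\beta'\,p_T(g)=q_S(f)\,\beta$, and its legs are again $p_S$ (to $X$) and $q_T$ (to $Z$) composed with the appropriate projections. I would then exhibit the functor
\[ F\maps (TS)^\dagger \to S^\dagger T^\dagger, \qquad (s,t,\alpha)\mapsto(t,s,\alpha^{-1}), \qquad (f,g)\mapsto(g,f). \]
A short computation shows that inverting the two horizontal isomorphisms turns the commuting-square condition defining morphisms of $(TS)^\dagger$ into the one defining morphisms of $S^\dagger T^\dagger$, so $F$ is a well-defined functor; it is visibly invertible, with inverse $(t,s,\beta)\mapsto(s,t,\beta^{-1})$, hence an isomorphism of groupoids; and it leaves the $S$- and $T$-components of each object untouched, so it intertwines the two legs. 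Thus $F$ is an isomorphism of spans from $Z$ to $X$.

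The main --- indeed, the only --- thing to watch in writing this out is the orientation bookkeeping in (1): keeping straight which leg of each span points to which groupoid once the daggers are taken, and observing that the two composites in question are literally the same weak pullback read with its central isomorphism reversed, which is precisely what the assignment $\alpha\mapsto\alpha^{-1}$ repairs. Once that is set up, all three verifications are routine.
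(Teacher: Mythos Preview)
Your proposal is correct and follows the same approach as the paper, which simply remarks that these equivalences ``follow easily after we show addition and composition of spans and scalar multiplication are well defined'' and later records parts (1) and (2) separately with one-line proofs (``clear by the definition of composition'' and ``clear since the addition of spans is given by coproduct of groupoids''). Your write-up is in fact considerably more explicit than the paper's: you actually exhibit the isomorphism $(s,t,\alpha)\mapsto(t,s,\alpha^{-1})$ for part (1) and track the legs carefully, whereas the paper leaves all of this to the reader; you also correctly flag and resolve the typing issue in part (2), where $S$ and $T$ must be read as parallel spans for the sum to make sense.
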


\begin{proof}
These will follow easily after we show addition and composition 
of spans and scalar multiplication are well defined.
\end{proof}

\noindent
In fact, degroupoidification is a functor 
\[         \utilde{\;\;} \,\maps \Span \to \Vect   \]
where $\Vect$ is the category of real vector spaces and linear
operators, and $\Span$ is a category with 
\begin{itemize}
\item
groupoids as objects,
\item
equivalence classes of tame spans as morphisms,
\end{itemize}
where composition comes from the method of composing spans we
have just described.  We prove this fact in Theorem \ref{functor}.  
A deeper approach, which we shall explain 
elsewhere, is to think of $\Span$ as a bicategory with
\begin{itemize}
\item
groupoids as objects,
\item
tame spans as morphisms,
\item
isomorphism classes of maps of spans as 2-morphisms
\end{itemize}
Then degroupoidification becomes a map between bicategories:
\[         \utilde{\;\;} \, \maps \Span \to \Vect   \]
where $\Vect$ is viewed as a bicategory with only identity 2-morphisms.
We can go even further and think of of $\Span$ as a tricategory with
\begin{itemize}
\item
groupoids as objects,
\item
tame spans as morphisms,
\item
maps of spans as 2-morphisms,
\item
maps of maps of spans as 3-morphisms.
\end{itemize}
However, we have not yet found a use for this further structure.

In short, groupoidification is not merely a way of replacing
linear algebraic structures involving the real numbers with purely
combinatorial structures.  It is also a form of `categorification'
\cite{BaezDolan:1998}, where we take structures defined in the category
$\Vect$ and find analogues that live in the bicategory $\Span$.

\section{Groupoidification}
\label{applications}

Degroupoidification is a systematic process; groupoidification is the
attempt to undo this process.  The previous section explains
degroupoidification --- but not why groupoidification is interesting.  
The interest lies in its applications to concrete examples.  So, 
let us sketch two: Feynman diagrams and Hecke algebras.

\subsection{Feynman Diagrams}
\label{feynman}

One of the first steps in developing quantum theory was Planck's new
treatment of electromagnetic radiation.  Classically, electromagnetic
radiation in a box can be described as a collection of harmonic
oscillators, one for each vibrational mode of the field in the box.
Planck `quantized' the electromagnetic field by assuming that the energy
of each oscillator could only take discrete, evenly spaced values: if
by fiat we say the lowest possible energy is $0$, the allowed energies
take the form $n \hbar \omega$, where $n$ is any natural number,
$\omega$ is the frequency of the oscillator in question, and $\hbar$
is Planck's constant.

Planck did not know what to make of the number $n$, but Einstein and
others later interpreted it as the number of `quanta' occupying the
vibrational mode in question.  However, far from being particles in
the traditional sense of tiny billiard balls, quanta are curiously
abstract entities --- for example, all the quanta occupying a given
mode are indistinguishable from each other.

In a modern treatment, states of a quantized harmonic oscillator are
described as vectors in a Hilbert space called `Fock space'.  This
Hilbert space consists of formal power series.  For a full treatment 
of the electromagnetic field we would need power series in many 
variables, one for each vibrational mode.  But to keep things simple, 
let us consider power series in one variable.  In this case, the
vector $z^n/n!$ describes a state in which $n$ quanta are present.  
A general vector in Fock space is a convergent linear combination of
these special vectors.   More precisely, the {\bf Fock space} 
consists of $\psi \in \C[[z]]$ with $\langle \psi, \psi \rangle < 
\infty$, where the inner product is given by
\begin{equation}
\label{fock_inner_product} 
         \left\langle 
         \sum a_n z^n \, , \,
         \sum b_n z^n 
         \right\rangle 
\; = \;
\sum n! \,\,  \overline{a}_n b_n \, .
\end{equation}

But what is the meaning of this inner product?  It is precisely the
inner product in $L^2(E)$, where $E$ is the groupoid of finite sets!
This is no coincidence.  In fact, there is a deep relationship between
the mathematics of the quantum harmonic oscillator and the
combinatorics of finite sets.  This relation suggests a program of
{\it groupoidifying} mathematical tools from quantum theory, such as
annihilation and creation operators, field operators and their
normal-ordered products, Feynman diagrams, and so on.  This program
was initiated by Dolan and one of the current authors
\cite{BaezDolan:2001}.  Later, it was developed much further by Morton
\cite{Morton:2006}.  Guta and Maassen \cite{GM:2002} and Aguiar and
Maharam \cite{AM:2008} have also done relevant work.  Here we just
sketch some of the basic ideas.

First, let us see why the inner product on Fock space matches the
inner product on $L^2(E)$ as described in Theorem \ref{innerprod_theorem}.
We can compute the latter inner product using a convenient basis.  Let
$\Psi_n$ be the groupoid with $n$-element sets as objects and
bijections as morphisms.  Since all $n$-element sets are isomorphic
and each one has the permutation group $S_n$ as automorphisms, we have
an equivalence of groupoids
\[               \Psi_n \simeq 1 /\!/S_n . \]
Furthermore, $\Psi_n$ is a groupoid over $E$ in an obvious way:
\[                   v \maps \Psi_n \to  E . \]
We thus obtain a vector $\utilde{\Psi}_n \in \R^{\underline{E}}$
following the rule described in Definition
\ref{degroupoidification_of_vectors}.  We can describe this vector as
a formal power series using the isomorphism
\[          \R^{\underline{E}} \cong \R[[z]]  
\]
described in Example \ref{power_series}.  To do this, note that
\[            v^{-1} (m) \simeq 
\begin{cases}
 1 /\!/S_n  &  m = n  \\
  0         &  m \ne n 
\end{cases}
\]
where $0$ stands for the empty groupoid.  It follows that
\[            |v^{-1} (m)| =
\begin{cases}
  1/n!  &  m = n  \\
  0         &  m \ne n 
\end{cases}
\]
and thus
\[     \utilde{\Psi}_n = \sum_{m \in \N} |v^{-1}(m)| \, z^m = 
\frac{z^n}{n!}  .\]

Next let us compute the inner product in $L^2(E)$.  Since finite
linear combinations of vectors of the form $\utilde{\Psi}_n$ are dense
in $L^2(E)$ it suffices to compute the inner product of two vectors of
this form.  We can use the recipe in Theorem \ref{innerprod_theorem}.  
So, we start by taking the weak pullback of the corresponding groupoids 
over $E$:
\[
\xymatrix{
& \ip{\Psi_m}{\Psi_n} \ar[dl] \ar[dr] & \\
\Psi_m \ar[dr] & & \Psi_n \ar[dl] \\
& E &
}
\]
An object of this weak pullback consists of an $m$-element set $S$, an
$n$-element set $T$, and a bijection $\alpha \maps S \to T$.  A
morphism in this weak pullback consists of a commutative square of
bijections:
\[
\xymatrix{
S \ar[d]_{f} \ar[r]^{\alpha} & T \ar[d]^{g} \\
S' \ar[r]_{\alpha'} & T'
}
\]
So, there are no objects in $\ip{\Psi_m}{\Psi_n}$ when $n \ne m$.
When $n = m$, all objects in this groupoid are isomorphic, and each
one has $n!$ automorphisms.  It follows that
\[       \langle \utilde{\Psi}_m, \utilde{\Psi}_n \rangle = 
| \langle \Psi_m, \Psi_n \rangle | = 
\begin{cases}
 1/n! &  m = n  \\
  0           & m \ne n 
\end{cases}
\]
Using the fact that $\utilde{\Psi}_n = z^n/n!$, we see that
this is precisely the inner product in Eq.\ \ref{fock_inner_product}.
So, as a complex Hilbert space, Fock space is the complexification of 
$L^2(E)$.

It is worth reflecting on the meaning of the computation we just did.
The vector $\utilde{\Psi}_n = z^n/n!$ describes a state of the quantum
harmonic oscillator in which $n$ quanta are present.  Now we see that
this vector arises from the groupoid $\Psi_n$ over $E$.  In Section
\ref{intro} we called a groupoid over $E$ a {\bf stuff type}, since it
describes a way of equipping finite sets with extra stuff.  The stuff
type $\Psi_n$ is a very simple special case, where the stuff is simply
`being an $n$-element set'.  So, groupoidification reveals the
mysterious `quanta' to be simply elements of finite sets.  Moreover,
the formula for the inner product on Fock space arises from the fact
that there are $n!$ ways to identify two $n$-element sets.

The most important operators on Fock space are the annihilation and
creation operators.  If we think of vectors in Fock space as formal
power series, the {\bf annihilation operator} is given by
\[        (a \psi)(z) = \frac{d}{dz} \psi(z)   \]
while the {\bf creation operator} is given by
\[        (a^* \psi)(z) = z \psi(z)  .\]
As operators on Fock space, these are only densely defined: for example,
they map the dense subspace $\C[z]$ to itself.  However, we can also think 
of them as operators from $\C[[z]]$ to itself.  In physics these 
operators decrease or increase the number of quanta in a state, since
\[        az^n = n z^{n-1}  , \qquad a^* z^n = z^{n+1}.\]
Creating a quantum and then annihilating one is not the same as 
annhilating and then creating one, since
\[        a a^* = a^* a + 1  .\]
This is one of the basic examples of noncommutativity in quantum theory.

The annihilation and creation operators arise from spans by 
degroupoidification, using the recipe described in Theorem 
\ref{PROCESS1}.  The annihilation operator comes from this span:
\[
\xymatrix{
& E\ar[dl]_{1} \ar[dr]^{S \mapsto S+1} & \\
E & & E
}
\]
where the left leg is the identity functor and the right
leg is the functor `disjoint union with a 1-element set'.
Since it is ambiguous to refer to this span by the name of
the groupoid on top, as we have been doing, we instead
call it $A$.  Similarly, we call its adjoint $A^*$:
\[
\xymatrix{
& E\ar[dl]_{S \mapsto S+1} \ar[dr]^{1} & \\
E & & E
}
\]
A calculation \cite{Morton:2006} shows that indeed:
\[         \utilde{A} = a, \qquad \utilde{A}^* = a^*  .\]
Moreover, we have an equivalence of spans:
\[           AA^* \simeq A^*A + 1 . \]
Here we are using composition of spans, addition of spans and the
identity span as defined in Section \ref{degroupoidification}.  If we
unravel the meaning of this equivalence, it turns out to be very
simple \cite{BaezDolan:2001}.  If you have an urn with $n$ balls in
it, there is one more way to put in a ball and then take one out than
to take one out and then put one in.  Why?  Because in the first
scenario there are $n+1$ balls to choose from when you take one out,
while in the second scenario there are only $n$.  So, the
noncommutativity of annihilation and creation operators is not a
mysterious thing: it has a simple, purely combinatorial explanation.

We can go further and define a span
\[   \Phi = A + A^*   \]
which degroupoidifies to give the well-known {\bf field operator}
\[   \phi = \utilde{\Phi} = a + a^*  \]
Our normalization here differs from the usual one in physics because
we wish to avoid dividing by $\sqrt{2}$, but all the usual physics
formulas can be adapted to this new normalization.  

The powers of the span $\Phi$ have a nice combinatorial
interpretation.  If we write its $n$th power as follows:
\[
\xymatrix{
& \Phi^n \ar[dl]_{q} \ar[dr]^{p} & \\
E & & E
}
\]
then we can reinterpret this span as a groupoid over $E \times E$:
\[
\xymatrix{
\Phi^n \ar[d]_{q \times p} \\
E \times E
}
\]
Just as a groupoid over $E$ describes a way of equipping a finite
set with extra stuff, a groupoid over $E \times E$ describes a way
of equipping a {\it pair} of finite sets with extra stuff.  
And in this example, the extra stuff in question is a very simple 
sort of diagram!  

More precisely, we can draw an object of $\Phi^n$ as a $i$-element set
$S$, a $j$-element set $T$, a graph with $i+j$ univalent vertices and
a single $n$-valent vertex, together with a bijection between the
$i+j$ univalent vertices and the elements of $S + T$.  It is against
the rules for vertices labelled by elements of $S$ to be connected by
an edge, and similarly for vertices labelled by elements of $T$.  The
functor $p \times q \maps \Phi^n \to E \times E$ sends such an object
of $\Phi^n$ to the pair of sets $(S,T) \in E \times E$.

An object of $\Phi^n$ sounds like a complicated thing, but it can be
depicted quite simply as a {\bf Feynman diagram}.  Physicists
traditionally read Feynman diagrams from bottom to top.  So, we draw
the above graph so that the univalent vertices labelled by elements of
$S$ are at the bottom of the picture, and those labelled by elements
of $T$ are at the top.  For example, here is an object of $\Phi^3$,
where $S = \{1,2,3\}$ and $T = \{4,5,6,7\}$:

\begin{center}
\begin{picture}(110,120)
  \includegraphics[scale=0.7]{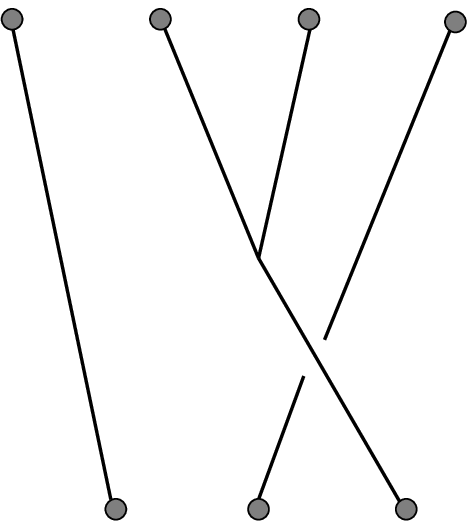} 
  \put(-96,110){$5$}
  \put(-66,110){$4$}
  \put(-36,110){$7$}
  \put(-6,110){$6$}
  \put(-76,-13){$1$}
  \put(-46,-13){$3$}
  \put(-16,-13){$2$}
\end{picture}
\end{center}

\vskip 0.5em
\noindent
In physics, we think of this as a process where 3 particles come
in and 4 go out.  

Feynman diagrams of this sort are allowed to have {\bf self-loops}:
edges with both ends at the same vertex.  So, for example, this is
a perfectly fine object of $\Phi^5$ with $S = \{1,2,3\}$ and 
$T = \{4,5,6,7\}$:

\begin{center}
\begin{picture}(110,120)
  \includegraphics[scale=0.7]{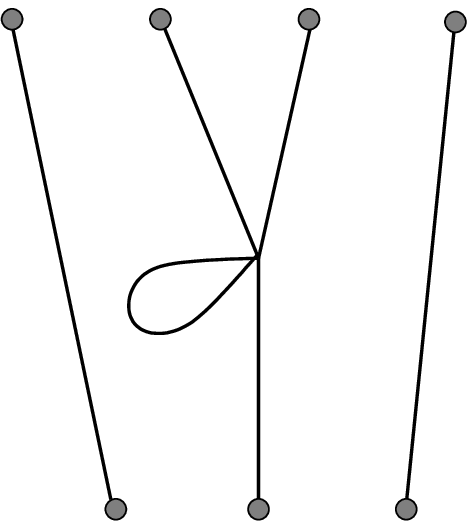}
  \put(-96,110){$5$}
  \put(-66,110){$4$}
  \put(-36,110){$6$}
  \put(-6,110){$7$}
  \put(-76,-13){$2$}
  \put(-46,-13){$3$}
  \put(-16,-13){$1$}
\end{picture}
\end{center}

\vskip 0.5em
\noindent
To eliminate self-loops, we can work with the {\bf normal-ordered
powers} or `Wick powers' of $\Phi$, denoted $\w\Phi^n\,\w\,$.  These 
are the spans obtained by taking $\Phi^n$, expanding it in terms
of the annihilation and creation operators, and moving all the 
annihilation operators to the right of all the creation operators 
`by hand', ignoring the fact that they do not commute.  For example: 
\begin{eqnarray*}     
          \w\Phi^0\,\w &=& 1    \\
          \w\Phi^1\,\w &=& A + A^*  \\
          \w\Phi^2\,\w &=& A^2 + 2A^* A + {A^*}^2 \\
          \w\Phi^3\,\w &=& A^3 + 3A^* A^2 + 3{A^*}^2 A + {A^*}^3 
\end{eqnarray*}
and so on.  Objects of $\w \Phi^n \w$ can be drawn as Feynman
diagrams just as we did for objects of $\Phi^n$.  There is just
one extra rule: self-loops are not allowed.

In quantum field theory one does many calculations involving 
products of normal-ordered powers of field operators.   Feynman 
diagrams make these calculations easy.  In the groupoidified context,
a product of normal-ordered powers is a span
\[
\xymatrix{
&       \w\Phi^{n_1}\,\w \; \cdots\; \w\Phi^{n_k}\,\w  
\ar[dl]_>>>>>>>{q} \ar[dr]^>>>>>>>{p} & \\
E & & E \, .
}
\]
As before, we can draw an object of the groupoid $\w\Phi^{n_1}\,\w \;
\cdots\; \w\Phi^{n_k}\,\w$ as a Feynman diagram.  But now these
diagrams are more complicated, and closer to those seen in physics
textbooks.  For example, here is a typical object of $\w \Phi^3 \w \,
\w \Phi^3 \w \, \w \Phi^4 \w$, drawn as a Feynman diagram:

\begin{center}
\begin{picture}(110,130)
  \includegraphics[scale=0.7]{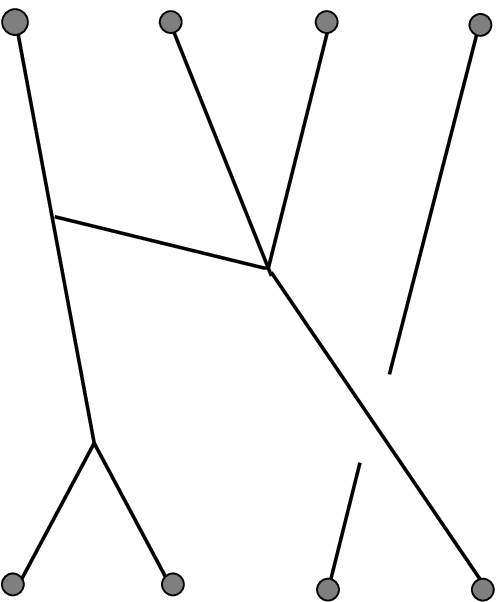}
 \put(-100,125){$5$}
  \put(-68,125){$8$}
  \put(-36,125){$7$}
  \put(-6,125){$6$}
  \put(-101,-11){$1$}
  \put(-68,-11){$4$}
  \put(-36,-11){$2$}
  \put(-5,-11){$3$} 
\end{picture}
\end{center}

\vskip 0.5em
\noindent
In general, a {\bf Feynman diagram} for an object of $\w\Phi^{n_1}\,\w
\; \cdots\; \w\Phi^{n_k}\,\w$ consists of an $i$-element set $S$, a
$j$-element set $T$, a graph with $n$ vertices of valence $n_1, \dots,
n_k$ together with $i+j$ univalent vertices, and a bijection between
these univalent vertices and the elements of $S+T$.  Self-loops are
forbidden; it is against the rules for two vertices labelled by
elements of $S$ to be connected by an edge, and similarly for two
vertices labelled by elements of $T$.  As before, the forgetful
functor $p \times q$ sends any such object to the pair of sets $(S,T)
\in E \times E$.

The groupoid $\w\Phi^{n_1}\,\w \; \cdots\; \w\Phi^{n_k}\,\w$ also contains
interesting automorphisms.  These come from {\it symmetries} of Feynman
diagrams: that is, graph automorphisms fixing the univalent
vertices labelled by elements of $S$ and $T$.  These symmetries play
an important role in computing the operator corresponding to this span:
\[
\xymatrix{
&       \w\Phi^{n_1}\,\w \; \cdots\; \w\Phi^{n_k}\,\w  
\ar[dl]_>>>>>>>{q} \ar[dr]^>>>>>>>{p} & \\
E & & E \, .
}
\]
As is evident from Theorem \ref{matrix}, when a Feynman diagram has
symmetries, we need to divide by the number of symmetries when
determining its contribution to the operator coming from the above
span.  This rule is well-known in quantum field theory; here we see it
arising as a natural consequence of groupoid cardinality.

\subsection{Hecke Algebras}
\label{hecke}

Hecke algebras are $q$-deformations of finite reflection groups, also
known as Coxeter groups \cite{Humphreys}.  Any Dynkin diagram gives
rise to a simple Lie group, and the Weyl group of this simple Lie
algebra is a Coxeter group.  Here we sketch how to groupoidify a Hecke
algebra when the parameter $q$ is a power of a prime number and the
finite reflection group comes from a Dynkin diagram in this way.  More
details will appear in future work \cite{Baez}.

Let $D$ be a Dynkin diagram.  We write $d \in D$ to mean that $d$ is a
dot in this diagram.  Associated to each unordered pair of dots $d,d'
\in D$ is a number $m_{dd'} \in \{2,3,4,6\}$.  In the usual Dynkin
diagram conventions:
\begin{itemize}
\item
$m_{dd'} = 2$ is drawn as no edge at all, 
\item
$m_{dd'} = 3$ is drawn as a single edge, 
\item 
$m_{dd'} = 4$ is drawn as a double edge, 
\item
$m_{dd'} = 6$ is drawn as a triple edge.  
\end{itemize}

For any nonzero number $q$, our Dynkin diagram gives a Hecke algebra.
Since we are using real vector spaces in this paper, we work with the
Hecke algebra over $\R$:
\begin{definition}
Let $D$ be a Dynkin diagram and $q$ a nonzero real number.  The
{\bf Hecke algebra} $H(D,q)$ corresponding to this data is the associative
algebra over $\R$ with one generator $\sigma_d$ for each $d \in D$,
and relations:
\[\sigma_d^2 = (q-1)\sigma_d + q \]
for all $d \in D$, and
\[\sigma_d\sigma_{d'}\sigma_d\ldots 
= \sigma_{d'} \sigma_d \sigma_{d'}\ldots \]
for all $d,d'\in D$, where each side has $m_{dd'}$ factors.
\end{definition}
\noindent
When $q = 1$, this Hecke algebra is simply the group algebra of
the {\bf Coxeter group} associated to $D$: that is, the group
with one generator $s_d$ for each dot $d \in D$, and relations
\[ s_d^2 = 1,  \qquad (s_d s_{d'})^{m_{dd'}} = 1 .\]
So, the Hecke algebra can be thought of as a $q$-deformation of this
Coxeter group.

If $q$ is a power of a prime number, the Dynkin diagram $D$ determines
a simple algebraic group $G$ over the field with $q$ elements, $\F_q$.
We choose a Borel subgroup $B \subseteq G$, i.e., a maximal solvable
subgroup.  This in turn determines a transitive $G$-set $X = G/B$.
This set is a smooth algebraic variety called the {\bf flag variety}
of $G$, but we only need the fact that it is a finite set equipped
with a transitive action of the finite group $G$.  Starting from just
this $G$-set $X$, we can groupoidify the Hecke algebra $H(D,q)$.

Recalling the concept of `action groupoid' from Section \ref{intro},
we define the {\bf groupoidified Hecke algebra} to be
\[ (X \times X)\Over G. \]
This groupoid has one isomorphism class of objects for each $G$-orbit
in $X \times X$:
\[ \underline{(X \times X)\Over G} \cong (X \times X)/G. \]
The well-known `Bruhat decomposition' of 
$X/G$ shows there is one such orbit for each element of the Coxeter group 
associated to $D$.  Using this, one can check that $(X \times X)\Over G$ 
degroupoidifies to give the underlying vector space of the Hecke algebra.  
In other words, there is a canonical isomorphism of vector spaces
\[   \R^{(X \times X)/G} \cong H(D,q) .\]

Even better, we can groupoidify the {\it multiplication} in the Hecke
algebra.  In other words, we can find a span that degroupoidifies to
give the linear operator
\[  
\begin{array}{ccc} 
     H(D,q) \otimes H(D,q) & \to & H(D,q)   \\
         a \otimes b       & \mapsto & ab 
\end{array}
\]
This span is very simple:
\begin{equation}
\label{multiplication}
  \def\objectstyle{\scriptstyle}
  \def\labelstyle{\scriptstyle}
   \xy
   (0,30)*+{(X \times X \times X)\Over G}="1";
   (-25,10)*+{(X\times X)\Over G \;\,\times\;\, (X\times X)\Over G}="2";
   (25,10)*+{(X\times X)\Over G}="3";
        {\ar_{(p_1,p_2)\times (p_2,p_3)} "1";"2"};
        {\ar^{(p_1,p_3)} "1";"3"};
\endxy
\end{equation}
where $p_i$ is projection onto the $i$th factor.

One can check through explicit computation that this span does the
job.  The key is that for each dot $d \in D$ there is a special
isomorphism class in $(X \times X)\Over G$, and the function 
\[   \psi_d \maps (X \times X)/G \to \R \]
that equals 1 on this isomorphism class and 0 on the rest 
corresponds to the generator $\sigma_d \in H(D,q)$.  

To illustrate these ideas, let us consider the simplest nontrivial
example, the Dynkin diagram $A_2$:
\[\xymatrix{ \bullet\ar@{-}[r] & \bullet}\]
The Hecke algebra associated to $A_2$ has two generators, which we
call $P$ and $L$, for reasons soon to be revealed:
\[        P = \sigma_1 , \qquad L = \sigma_2 . \]
The relations are
\[     P^2 = (q-1) P + q, \qquad L^2= (q-1)P + q,  \qquad PLP=LPL  .\]
It follows that this Hecke algebra is a quotient of the group algebra
of the 3-strand braid group, which has two generators $P$ and $L$ and
one relation $PLP = LPL$, called the {\bf Yang--Baxter equation} or
{\bf third Reidemeister move}.  This is why Jones could use traces on
the $A_n$ Hecke algebras to construct invariants of knots
\cite{Jones}.  This connection to knot theory makes it especially
interesting to groupoidify Hecke algebras.

So, let us see what the groupoidified Hecke algebra looks like, and
where the Yang--Baxter equation comes from.  The algebraic group
corresponding to the $A_2$ Dynkin diagram and the prime power $q$ is
$G = \SL(3,\F_q)$, and we can choose the Borel subgroup $B$ to consist
of upper triangular matrices in $\SL(3,\F_q)$.  Recall that a {\bf
complete flag} in the vector space $\F_q^3$ is a pair of subspaces
\[           0 \subset V_1 \subset V_2 \subset \F_q^3  . \]
The subspace $V_1$ must have dimension 1, while $V_2$ must have
dimension 2.  Since $G$ acts transitively on the set of complete
flags, while $B$ is the subgroup stabilizing a chosen flag, the flag
variety $X = G/B$ in this example is just the set of complete flags in
$\F_q^3$ --- hence its name.

We can think of $V_1 \subset \F_q^3$ as a point in the projective
plane $\F_q{\mathrm P}^2$, and $V_2 \subset \F_q^3$ as a line in this
projective plane.  From this viewpoint, a complete flag is a chosen
point lying on a chosen line in $\F_q {\mathrm P}^2$.  This viewpoint
is natural in the theory of `buildings', where each Dynkin diagram
corresponds to a type of geometry \cite{Brown,Garrett}.  Each dot in
the Dynkin diagram then stands for a `type of geometrical figure',
while each edge stands for an `incidence relation'.  The $A_2$ Dynkin
diagram corresponds to projective plane geometry.  The dots in this
diagram stand for the figures `point' and `line':
\[\xymatrix{ {\rm point} \; \bullet \ar@{-}[r] & 
\bullet \; {\rm line} }\] 
The edge in this diagram stands for the incidence
relation `the point $p$ lies on the line $\ell$'.  

We can think of $P$ and $L$ as special elements of the $A_2$ Hecke
algebra, as already described.  But when we groupoidify the Hecke
algebra, $P$ and $L$ correspond to {\it objects} of $(X \times X)\Over
G$.  Let us describe these objects and explain how the Hecke algebra
relations arise in this groupoidified setting.

As we have seen, an isomorphism class of objects in $(X \times X)\Over
G$ is just a $G$-orbit in $X \times X$.  These orbits in turn
correspond to spans of $G$-sets from $X$ to $X$ that are {\bf
irreducible}: that is, not a coproduct of other spans of $G$-sets.
So, the objects $P$ and $L$ can be defined by giving irreducible spans
of $G$-sets:
\[
\xymatrix{
    & P \ar[dl]\ar[dr] &    &&     & L\ar[dl]\ar[dr] & \\
X &                  &X && X &                 &X
}
\]

In general, any span of $G$-sets
\[
\xymatrix{
& S \ar[dl]_{q} \ar[dr]^{p} & \\
X & & X}
\]
such that $q \times p \maps S \to X \times X$ is injective can be
thought of as $G$-invariant binary relation between elements of $X$.
Irreducible $G$-invariant spans are always injective in this sense.
So, such spans can also be thought of as $G$-invariant relations between
flags.  In these terms, we define $P$ to be the relation that says 
two flags have the same line, but different points:
\[ P = \{((p,\ell),(p',\ell)) \in X \times X \mid p\neq p'\}
\]
Similarly, we think of $L$ as a relation saying two flags
have different lines, but the same point:
\[ L = 
\{((p,\ell),(p,\ell')) \in X \times X \mid \ell\neq \ell'\}. \]
Given this, we can check that
\[ P^2 \cong (q-1) \times P + q \times 1, \qquad L^2 \cong (q-1) \times L + 
q \times 1, \qquad PLP \cong LPL . \] 
Here both sides refer to spans of $G$-sets, and we denote a span by
its apex.  Addition of spans is defined using coproduct, while $1$
denotes the identity span from $X$ to $X$.  We use `$q$' to stand for
a fixed $q$-element set, and similarly for `$q-1$'.  We compose spans
of $G$-sets using the ordinary pullback.  It takes a bit of thought to
check that this way of composing spans of $G$-sets matches the product
described by Eq.\ \ref{multiplication}, but it is indeed the case.

To check the existence of the first two isomorphisms above, we just
need to count.  In $\mathbb{F}_q\mathrm{P}^2$, the are $q+1$ points on any
line.  So, given a flag we can change the point in $q$ different ways.
To change it again, we have a choice: we can either send it back to
the original point, or change it to one of the $q-1$ other points.
So, $P^2 \cong (q-1) \times P + q \times 1$.  Since there are also
$q+1$ lines through any point, similar reasoning shows that $L^2 \cong
(q-1) \times L + q \times 1$.

The Yang--Baxter isomorphism
\[          PLP \cong LPL  \]
is more interesting.  We construct it as follows.  First consider the
left-hand side, $PLP$.  So, start with a complete flag called
$(p_1,\ell_1)$:
\hfill \break
\begin{center}
\begin{picture}(50,25)
  \includegraphics[scale=0.35]{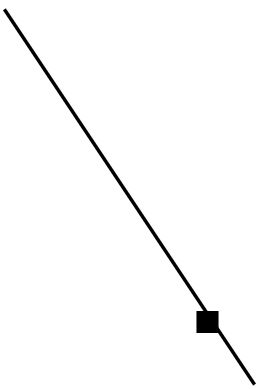}
  \put(-20,2){$p_1$}
  \put(0,-10){$\ell_1$}
\end{picture}
\end{center}
\medskip
Then, change the point to obtain a flag $(p_2,\ell_1)$.
Next, change the line to obtain a flag $(p_2,\ell_2)$.
Finally, change the point once more, which gives us the flag $(p_3,\ell_2)$:   
\begin{center}
  \begin{picture}(250,45)
  \includegraphics[scale=0.35]{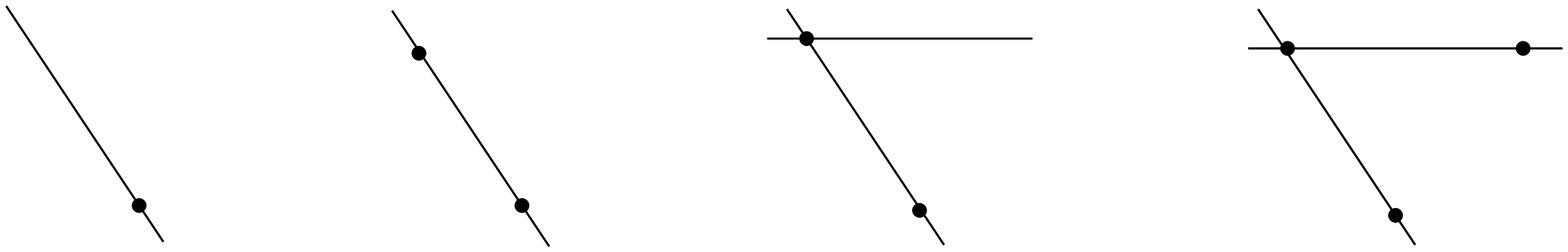}
  \put(-245,4){$p_1$}
  \put(-225,-10){$\ell_1$}
  \put(-185,4){$p_1$}
  \put(-165,-10){$\ell_1$}
  \put(-200,25){$p_2$}
  \put(-120,4){$p_1$}
  \put(-100,-10){$\ell_1$}
  \put(-130,23){$p_2$}
  \put(-80,35){$\ell_2$}
  \put(-40,4){$p_1$}
  \put(-20,-10){$\ell_1$}
  \put(-55,23){$p_2$}
  \put(5,35){$\ell_2$}
  \put(-15,40){$p_3$}
  \end{picture}
\end{center}  
\medskip
\noindent 
The figure on the far right is a typical object of $PLP$.

On the other hand, consider $LPL$.  So, start with the same flag as
before, but now change the line, obtaining $(p_1,\ell'_2)$.  Next
change the point, obtaining the flag $(p'_2,\ell'_2)$.  Finally,
change the line once more, obtaining the flag $(p'_2,\ell'_3)$:
\hfill \break
\medskip
\begin{center}
\begin{picture}(240,40)
  \includegraphics[scale=0.35]{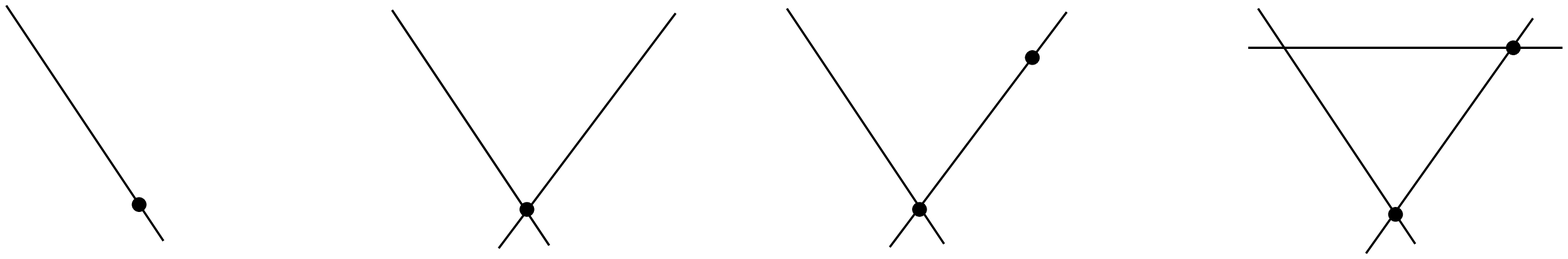}
  \put(-245,3){$p_1$}
  \put(-225,-10){$\ell_1$}
  \put(-185,3){$p_1$}
  \put(-170,-10){$\ell_1$}
  \put(-150,45){$\ell_2'$} 
  \put(-120,3){$p_1$}
  \put(-100,-10){$\ell_1$}
  \put(-85,45){$\ell_2'$} 
  \put(-85,20){$p_2'$}    
  \put(-45,4){$p_1$}
  \put(-25,-10){$\ell_1$}
  \put(-10,43){$\ell_2'$} 
  \put(-10,20){$p_2'$}  
  \put(-60,25){$\ell_3'$}   
\end{picture}
\end{center}
\medskip
\noindent
The figure on the far right is a typical object of $LPL$.

Now, the axioms of projective plane geometry say that any two distinct
points lie on a unique line, and any two distinct lines intersect in a
unique point.  So, any figure of the sort shown on the left below
determines a unique figure of the sort shown on the right, and vice
versa: 
\hfill\break
\begin{center}
\begin{picture}(150,50)
\includegraphics[scale=0.50]{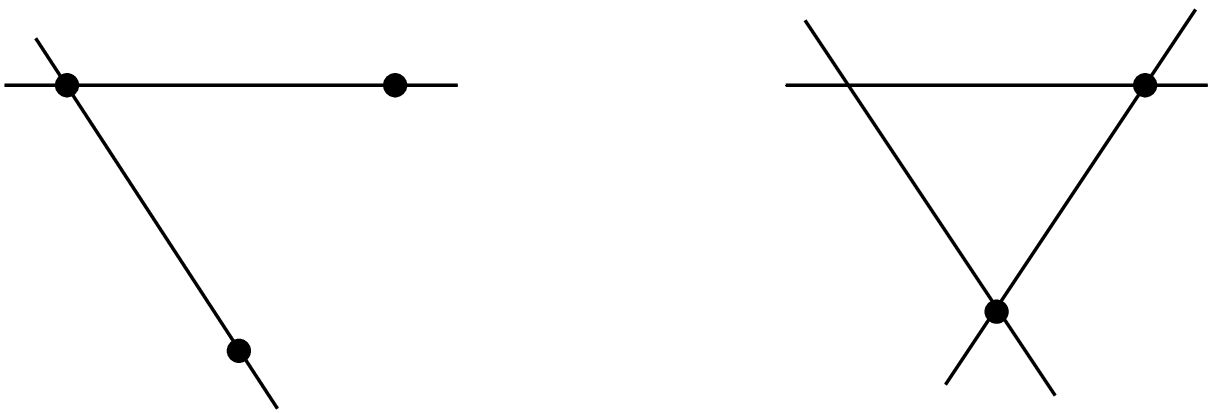}
\end{picture}
\end{center}
\medskip
\noindent
Comparing this with the pictures above, we see this bijection induces
an isomorphism of spans $PLP \cong LPL$.  So, we have derived the
Yang--Baxter isomorphism from the axioms of projective plane geometry!

\section{Degroupoidifying a Tame Span}
\label{processes}

In Section \ref{degroupoidification} we described a process for
turning a tame span of groupoids into a linear operator.  In this
section we show this process is well-defined.  The calculations
in the proof yield an explicit criterion for when a span is tame.
They also give an explicit formula for the the operator coming from a
tame span.  As part of our work, we also show that equivalent spans
give the same operator.

\subsection{Tame Spans Give Operators}

To prove that a tame span gives a well-defined operator, we begin with
three lemmas that are of some interest in themselves.  We postpone to
Appendix \ref{appendix} some well-known facts about groupoids that do
not involve the concept of degroupoidification.  This Appendix also
recalls the familiar concept of `equivalence' of groupoids, which
serves as a basis for this:

\begin{definition}
Two groupoids over a fixed groupoid $X$, say $v \maps \Psi \to X$
and $w \maps \Phi \to X$, are {\bf equivalent} as groupoids 
over $X$ if there is an equivalence $F \maps \Psi \to \Phi$ 
such that this diagram
\[
\xymatrix{
\Psi \ar[rr]^{F} \ar[dr]_{p} && \Phi \ar[dl]^{q} \\
&X&
}
\]
commutes up to natural isomorphism.
\end{definition}

\begin{lemma}\label{vectorswelldefined}
Let $v \maps \Psi \to X$ and $w \maps \Phi \to X$ be equivalent
groupoids over $X$.  If either one is tame, then both are tame,
and $\utilde{\Psi} = \utilde{\Phi}$.
\end{lemma}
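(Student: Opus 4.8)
The plan is to unpack the definition of the vector $\utilde{\Psi}$ and show that it depends only on the equivalence class of $v \maps \Psi \to X$ as a groupoid over $X$. By Definition \ref{degroupoidification_of_vectors}, $\utilde{\Psi}([x]) = |v^{-1}(x)|$, so it suffices to prove that for each object $x \in X$, the essential inverse images $v^{-1}(x)$ and $w^{-1}(x)$ are equivalent groupoids, and then invoke Lemma \ref{EQUIVGRPD} (that equivalent groupoids have equal cardinality) to conclude both sides are finite together and are equal. The tameness claim then follows immediately from the tameness definition for groupoids over $X$, which asks that $v^{-1}(x)$ be tame for all $x$.

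First I would fix the equivalence $F \maps \Psi \to \Phi$ together with a natural isomorphism $\eta \maps w F \To v$ witnessing that the triangle commutes up to natural isomorphism. Since $F$ is an equivalence, choose a weak inverse $G \maps \Phi \to \Psi$ and natural isomorphisms $GF \iso \id_\Psi$, $FG \iso \id_\Phi$. Next I would define a functor $\widehat{F} \maps v^{-1}(x) \to w^{-1}(x)$: on an object $a \in \Psi$ with a chosen isomorphism $v(a) \iso x$, send it to $F(a)$, noting that $w(F(a)) \cong v(a) \cong x$ via $\eta_a$, so $F(a)$ genuinely lies in $w^{-1}(x)$; on morphisms, $\widehat{F}$ acts as $F$ does, which is legitimate because a morphism in the essential inverse image is just a morphism in the ambient groupoid. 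Symmetrically $G$ restricts to $\widehat{G} \maps w^{-1}(x) \to v^{-1}(x)$. The natural isomorphisms $GF \iso \id_\Psi$ and $FG \iso \id_\Phi$ restrict to natural isomorphisms $\widehat{G}\widehat{F} \iso \id$ and $\widehat{F}\widehat{G} \iso \id$ — again because the morphism-sets of the essential inverse images are full in the ambient groupoids, so the components of these natural isomorphisms are automatically morphisms in the respective essential inverse images. Hence $v^{-1}(x) \simeq w^{-1}(x)$.

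With that equivalence established, Lemma \ref{EQUIVGRPD} gives $|v^{-1}(x)| = |w^{-1}(x)|$, with the convention that this holds in $[0,\infty]$ so that one side is finite precisely when the other is. Therefore, if $v \maps \Psi \to X$ is tame — meaning $|v^{-1}(x)| < \infty$ for all $x$ — then $|w^{-1}(x)| < \infty$ for all $x$ as well, so $w \maps \Phi \to X$ is tame, and by symmetry the converse holds. When both are tame, $\utilde{\Psi}([x]) = |v^{-1}(x)| = |w^{-1}(x)| = \utilde{\Phi}([x])$ for every isomorphism class $[x]$, so $\utilde{\Psi} = \utilde{\Phi}$ as elements of $\R^{\underline{X}}$.

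The only real subtlety — the step I would be most careful about — is verifying that $\widehat{F}$ and $\widehat{G}$ are genuinely well-defined functors into the essential inverse images and that the restricted natural isomorphisms have components lying in the right subgroupoids. This is where the precise form of the definition of essential inverse image matters: because a morphism in $v^{-1}(x)$ is declared to be \emph{any} morphism of $\Psi$ between the relevant objects (not just one compatible with the chosen isomorphisms to $x$), all the compatibility conditions one might worry about are vacuous, and the argument goes through cleanly. It would be worth stating this observation explicitly rather than leaving it implicit.
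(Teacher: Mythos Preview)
Your proposal is correct and follows essentially the same route as the paper: show that the essential inverse images $v^{-1}(x)$ and $w^{-1}(x)$ are equivalent groupoids, then invoke Lemma~\ref{EQUIVGRPD}. The paper simply packages your restriction argument (that an equivalence $F$ together with the natural isomorphism $\eta$ restricts to an equivalence $v^{-1}(x) \simeq w^{-1}(x)$, using that essential inverse images are full subgroupoids) as the separate Lemma~\ref{ESSENTIALPULLBACK}, proved there via the full/faithful/essentially surjective characterization rather than by explicitly restricting a weak inverse, but the content is the same.
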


\begin{proof}
This follows directly from Lemmas \ref{EQUIVGRPD} and 
\ref{ESSENTIALPULLBACK} in Appendix \ref{appendix}.
\end{proof}

\begin{lemma}
\label{addvectors}
Given tame groupoids $\Phi$ and $\Psi$ over $X$,
\[\utilde{\Phi + \Psi} = \utilde{\Phi} + \utilde{\Psi}.\]
More generally, given any collection of tame groupoids $\Psi_i$ over $X$,
the coproduct $\sum_i \Psi_i$ is naturally a 
groupoid over $X$, and if it is tame, then
\[\utilde{\sum_i \Psi_i} = \sum_i \utilde{\Psi}_i \]
where the sum on the right hand side converges pointwise as
a function on $\u{X}$.
\end{lemma}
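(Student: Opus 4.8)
The plan is to reduce everything to the definition of groupoid cardinality and the behavior of essential inverse images under coproducts. First I would recall that for $v \maps \Psi \to X$ and $w \maps \Phi \to X$, the coproduct $\Psi + \Phi$ carries the obvious functor to $X$ induced by the universal property of the coproduct; call it $u$. The key observation is that taking essential inverse images commutes with coproducts: for any object $x \in X$, there is a canonical equivalence of groupoids
\[
u^{-1}(x) \;\simeq\; v^{-1}(x) + w^{-1}(x),
\]
since an object of $u^{-1}(x)$ is an object $a$ of $\Psi + \Phi$ with $u(a) \cong x$, and such an $a$ lies in exactly one of the two summands, with all morphisms staying within that summand (there are no morphisms in a coproduct between objects of different summands). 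This is the main structural point; everything else is bookkeeping.

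Next I would invoke additivity of groupoid cardinality under coproducts: $|Y + Z| = |Y| + |Z|$ for any groupoids $Y, Z$ (this is immediate from Definition of cardinality, since $\u{Y+Z} \cong \u{Y} + \u{Z}$ and automorphism groups are computed within a summand). Combined with Lemma~\ref{EQUIVGRPD} (equivalent groupoids have equal cardinality) applied to the equivalence above, I get
\[
|u^{-1}(x)| = |v^{-1}(x) + w^{-1}(x)| = |v^{-1}(x)| + |w^{-1}(x)|.
\]
In particular, since $\Phi$ and $\Psi$ are tame, each term on the right is finite, so the left side is finite for every $x$, which shows $\Phi + \Psi$ is a tame groupoid over $X$. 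Evaluating via Definition~\ref{degroupoidification_of_vectors}, $\utilde{\Phi+\Psi}([x]) = |u^{-1}(x)| = |v^{-1}(x)| + |w^{-1}(x)| = \utilde{\Phi}([x]) + \utilde{\Psi}([x])$, which is the claimed equality of vectors in $\R^{\u{X}}$.

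For the general statement about an arbitrary family $\{\Psi_i\}$, the same argument goes through with a countable (or arbitrary) coproduct in place of the binary one: an object of $(\sum_i \Psi_i)$ lying over $x$ lies in exactly one summand, so the essential inverse image of $x$ in $\sum_i \Psi_i$ is equivalent to $\sum_i v_i^{-1}(x)$, and hence $|(\sum_i \Psi_i)^{-1}(x)| = \sum_i |v_i^{-1}(x)| = \sum_i \utilde{\Psi}_i([x])$, a sum of nonnegative terms. When $\sum_i \Psi_i$ is assumed tame this sum is finite for each $x$, and the stated equation $\utilde{\sum_i \Psi_i} = \sum_i \utilde{\Psi}_i$ holds with pointwise convergence on $\u{X}$, as claimed. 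The only mild subtlety — and the place I expect to spend a sentence of care — is justifying that cardinality is countably additive over coproducts, i.e.\ that rearranging the defining double sum (over summands, then over isomorphism classes within each summand) is legitimate; this is fine because every term $1/|\Aut(a)|$ is nonnegative, so the unordered sum is well-defined and equals any iterated sum regardless of grouping. With that in hand, no genuine obstacle remains.
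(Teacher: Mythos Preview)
Your proposal is correct and follows essentially the same approach as the paper: the paper's proof simply notes that the essential inverse image of $x$ in the coproduct is the coproduct of the essential inverse images in each $\Psi_i$, and that groupoid cardinality is additive under coproduct. Your version just spells out these two points with more care (including the nonnegativity justification for rearranging the sum), which is fine.
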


\begin{proof}
The essential inverse image of any object $x \in X$ in the coproduct
$\sum_i \Psi_i$ is the coproduct of its essential inverse images in
each groupoid $\Psi_i$.  Since groupoid cardinality is additive under
coproduct, the result follows.
\end{proof}

\begin{lemma}
\label{linearity}
Given a span of groupoids 
\[
\xymatrix{
& S\ar[dl]_{q} \ar[dr]^{p} & \\
Y & & X
}
\]
we have
\begin{enumerate}
\item $S(\sum_i \Psi_i)\simeq \sum_i S\Psi_i$
\item $S(\Lambda\times \Psi)\simeq \Lambda\times S\Psi$
\end{enumerate}
whenever $v_i \maps \Psi_i \to X$ are groupoids over $X$,
$v \maps \Psi \to X$ is a groupoid over $X$, and 
$\Lambda$ is a groupoid.
\end{lemma}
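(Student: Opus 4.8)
The plan is to argue directly from the explicit description of the weak pullback. Recall that for a span with legs $q \maps S \to Y$, $p \maps S \to X$ and a groupoid over $X$, say $v \maps \Phi \to X$, the groupoid $S\Phi$ has as objects the triples $(s,a,\alpha)$ where $s$ is an object of $S$, $a$ is an object of $\Phi$, and $\alpha \maps p(s) \to v(a)$ is an isomorphism in $X$; a morphism $(s,a,\alpha) \to (s',a',\alpha')$ is a pair $(f,g)$ with $f \maps s \to s'$ in $S$ and $g \maps a \to a'$ in $\Phi$ making the square built from $\alpha$, $\alpha'$, $p(f)$, $v(g)$ commute, and the functor $S\Phi \to Y$ is $q\pi_S \maps (s,a,\alpha) \mapsto q(s)$. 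In each part we produce a strict isomorphism of groupoids by rearranging this tuple data; a strict isomorphism is in particular an equivalence, and ours will visibly commute with the forgetful functors to $Y$, since those read off only the $S$-component $s$.

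\emph{Part (1).} Since $\sum_i \Psi_i$ is the disjoint union, every object lies in exactly one summand $\Psi_i$ and there are no morphisms between objects of distinct summands. Hence specifying an object $(s,a,\alpha)$ of $S(\sum_i \Psi_i)$ is the same as specifying the index $i$ with $a \in \Psi_i$ together with an object of $S\Psi_i$, and a morphism of $S(\sum_i \Psi_i)$ must keep $a,a'$ in a common summand $\Psi_i$ and is then exactly a morphism of $S\Psi_i$. This correspondence is functorial, bijective on objects and morphisms, and preserves the $S$-component, so it is an isomorphism $S(\sum_i \Psi_i) \cong \sum_i S\Psi_i$ of groupoids over $Y$.

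\emph{Part (2).} An object of $S(\Lambda \times \Psi)$ is a triple $(s,(\lambda,a),\alpha)$ with $\alpha \maps p(s) \to v\pi_2(\lambda,a) = v(a)$; this is precisely an object $\lambda$ of $\Lambda$ together with an object $(s,a,\alpha)$ of $S\Psi$, that is, after reassociating, an object $(\lambda,(s,a,\alpha))$ of $\Lambda \times S\Psi$. A morphism of $\Lambda \times \Psi$ is a pair $(h,g)$ with $h$ a morphism in $\Lambda$ and $g \maps a \to a'$ in $\Psi$, and the commuting-square condition defining a morphism of $S(\Lambda \times \Psi)$ involves only the image under $v\pi_2$, hence only $g$ together with the $S$-side map $f$, not $h$. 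So a morphism of $S(\Lambda \times \Psi)$ is exactly a morphism $h$ of $\Lambda$ together with a morphism of $S\Psi$ --- that is, a morphism of $\Lambda \times S\Psi$. Again this is a strict isomorphism over $Y$.

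There is essentially no obstacle here: both claims are the concrete manifestation of the fact that the weak pullback, as a functor from groupoids over $X$ to groupoids over $Y$, preserves coproducts and commutes with multiplication by a fixed groupoid. The only care needed is in bookkeeping the morphism conditions --- in particular noting that in $S(\Lambda \times \Psi)$ the coherence isomorphism $\alpha$ is untouched by the $\Lambda$-factor --- and in checking compatibility with the projections to $Y$, both of which we have indicated above.
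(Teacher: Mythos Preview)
Your proof is correct and follows essentially the same approach as the paper: unpack the explicit description of objects and morphisms in the weak pullback and observe that the data on both sides matches. If anything, you are more careful than the paper, which merely asserts that the evident functor ``extends to an equivalence'': you spell out the morphism-level correspondence, note that the resulting functor is a strict isomorphism (hence an equivalence), and explicitly check compatibility with the projection to $Y$ so that the equivalence is one of groupoids over $Y$.
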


\begin{proof}
To prove 1, we need to describe a functor
\[ 
F\maps \sum_i S \Psi_i \to S(\sum_i \Psi_i)
\]
that will provide our equivalence.  For this, we simply need to
describe for each $i$ a functor $F_i \maps S \Psi_i \to S(\sum_i
\Psi_i)$.  An object in $S \Psi_i$ is a triple $(s,z,\alpha)$ where $s
\in S$, $z \in \Psi_i$ and $\alpha \maps p(s) \to v_i(z)$.  $F_i$
simply sends this triple to the same triple regarded as an object of
$S(\sum_i \Psi_i)$.  One can check that $F$ extends to a functor and
that this functor extends to an equivalence of groupoids over $S$.

To prove 2, we need to describe a functor $F \maps S(\Lambda\times
\Phi) \to \Lambda\times S\Phi$.  This functor simply re-orders the
entries in the quadruples which define the objects in each groupoid.
One can check that this functor extends to an equivalence of groupoids
over $X$.
\end{proof}

\noindent
Finally we need the following lemma, which simplifies the computation
of groupoid cardinality:

\begin{lemma}\label{ALTCARD}
We have
\[ |X| 
= \sum_{x \in X} \frac{1}{|\Mor(x,-)|} \]
where $\Mor(x,-) = \bigcup_{y\in X}\hom(x,y)$ is
the set of morphisms whose source is the object $x \in X$.  
\end{lemma}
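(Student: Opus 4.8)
The plan is to expand the right-hand side by breaking the sum over objects $x \in X$ into a sum over isomorphism classes, and then to observe that $|\Mor(x,-)|$ depends only on the isomorphism class of $x$, and moreover is closely tied to $|\Aut(x)|$. More precisely, first I would recall that $\Mor(x,-) = \bigcup_{y \in X} \hom(x,y)$, where the union is disjoint. Now $X$ is a groupoid, so every morphism out of $x$ is invertible; hence $\hom(x,y) = \emptyset$ unless $y \cong x$, and when $y \cong x$ we have $|\hom(x,y)| = |\Aut(x)|$ by composing with a fixed isomorphism $x \to y$ (left translation is a bijection $\Aut(x) \to \hom(x,y)$). Therefore $|\Mor(x,-)| = |\Aut(x)| \cdot n_{[x]}$, where $n_{[x]}$ is the number of objects $y \in X$ with $y \cong x$, i.e., the cardinality of the isomorphism class $[x]$ (here we are implicitly assuming $X$ has finitely many objects in each isomorphism class; in the infinite case the right-hand sum is interpreted as $\sum_{x} 1/|\Mor(x,-)|$ with each term zero when the class is infinite, matching the convention that such a groupoid has cardinality handled via tameness — but the clean statement is for the locally finite case, and I would note this).

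Given this, I would compute
\[
\sum_{x \in X} \frac{1}{|\Mor(x,-)|}
= \sum_{[x] \in \u{X}} \; \sum_{\substack{x' \in X \\ x' \cong x}} \frac{1}{|\Mor(x',-)|}
= \sum_{[x] \in \u{X}} \; \sum_{\substack{x' \in X \\ x' \cong x}} \frac{1}{n_{[x]} \, |\Aut(x)|},
\]
using that $|\Aut(x')| = |\Aut(x)|$ and $n_{[x']} = n_{[x]}$ whenever $x' \cong x$. The inner sum has exactly $n_{[x]}$ terms, each equal to $\frac{1}{n_{[x]}|\Aut(x)|}$, so it collapses to $\frac{1}{|\Aut(x)|}$. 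Hence the whole expression equals $\sum_{[x] \in \u{X}} \frac{1}{|\Aut(x)|} = |X|$ by the definition of groupoid cardinality. This proves the lemma.

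There is no serious obstacle here; the one point requiring care is the bookkeeping of the double-counting: each isomorphism class with $n$ objects contributes $n$ copies of $1/(n|\Aut(x)|)$, and these must cancel to give the single summand $1/|\Aut(x)|$ that appears in the definition of $|X|$. The reason $|\hom(x,y)| = |\Aut(x)|$ rather than something asymmetric is exactly that $X$ is a groupoid — this is where invertibility of morphisms is used — so I would make sure to state that explicitly. I would also remark that the identity should be read with the same divergence convention as the original definition of $|X|$: if either side is $\infty$, so is the other, since all terms are nonnegative and the rearrangement of a sum of nonnegative terms is valid.
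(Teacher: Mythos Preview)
Your proof is correct and follows essentially the same approach as the paper: both establish that $|\Mor(x,-)| = |[x]| \cdot |\Aut(x)|$ via a bijection (you phrase it as left translation giving $|\hom(x,y)| = |\Aut(x)|$ for each $y \cong x$; the paper packages this as a single bijection $[x] \times \Aut(x) \to \Mor(x,-)$), and then regroup the sum over objects by isomorphism class. Your caveat about needing finitely many objects per isomorphism class is apt --- the paper's proof tacitly assumes the same, and in fact the lemma is only applied to skeletal groupoids, where $|[x]| = 1$.
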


\begin{proof}
We check the following equalities:
\[  \sum_{[x] \in \underline{X}} \frac{1}{|\Aut(x)|} 
= \sum_{[x] \in \underline{X}} \frac{|[x]|}{|\Mor(x,-)|} 
= \sum_{x \in X} \frac{1}{|\Mor(x,-)|} .\]
Here $[x]$ is the set of objects isomorphic to $x$, and $|[x]|$ is the
ordinary cardinality of this set.  To check the above equations, we first
choose an isomorphism $\gamma_y \maps x \to y$ for each object $y$ isomorphic
to $x$.  This gives a bijection from $[x] \times 
\Aut(x)$ to $\Mor(x,-)$ that takes $(y, f \maps x \to x)$ to
$\gamma_y f \maps x \to y$.  Thus
\[         |[x]| \, |\Aut(x)| = |\Mor(x,-)| , \]
and the first equality follows.  We also get a bijection between
$\Mor(y,-)$ and $\Mor(x,-)$ that takes $f \maps y \to z$ to
$f\gamma_y \maps x \to z$.  Thus, $|\Mor(y,-)| = |\Mor(x,-)|$ whenever
$y$ is isomorphic to $x$.  The second equation follows from this.
\end{proof}

Now we are ready to prove the main theorem of this section:

\begin{thm}\label{PROCESS1}
Given a tame span of groupoids
\[
\xymatrix{
& S \ar[dl]_{q} \ar[dr]^{p} & \\
Y && X
}
\]
there exists a unique linear operator 
$\utilde{S} \maps \R^{\underline{X}} \to \R^{\underline{Y}}$
such that $\utilde{S}\utilde{\Psi} = \utilde{S\Psi}$ for 
any vector $\utilde{\Psi}$ obtained from a tame groupoid $\Psi$
over $X$.
\end{thm}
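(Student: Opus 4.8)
The plan is threefold: (i) compute the cardinality of the weak pullback $S\Phi$ explicitly, so as to obtain a fixed nonnegative matrix $\utilde{S}_{[y][x]}$ with $\utilde{S\Phi}([y]) = \sum_{[x]}\utilde{S}_{[y][x]}\,\utilde{\Phi}([x])$ for \emph{every} tame groupoid $\Phi$ over $X$; (ii) show, using the span's tameness together with a construction that realizes an arbitrary nonnegative function on $\u X$ as some $\utilde{\Phi}$, that this matrix defines a genuine linear operator $\R^{\u X}\to\R^{\u Y}$ with the required property; and (iii) deduce uniqueness from the fact that the vectors $\utilde{\Psi}$ span $\R^{\u X}$.

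For step (i) I would fix a tame groupoid $v\maps\Phi\to X$ and an object $y\in Y$ and evaluate $\utilde{S\Phi}([y]) = |(q\pi_S)^{-1}(y)|$ using Lemma \ref{ALTCARD}. An object of $(q\pi_S)^{-1}(y)$ is a triple $(s,a,\alpha)$ with $q(s)\cong y$ and $\alpha$ an isomorphism $p(s)\to v(a)$ in $X$; a morphism out of it is given freely by a pair $(f,g)\in\Mor_S(s,-)\times\Mor_\Phi(a,-)$, the target connecting isomorphism being forced to equal $v(g)\,\alpha\, p(f)^{-1}$. Hence $|\Mor((s,a,\alpha),-)| = |\Mor_S(s,-)|\,|\Mor_\Phi(a,-)|$, and since for each admissible $a$ there are $|\Aut_X(p(s))|$ choices of $\alpha$, summing reciprocals and recognizing the inner sum $\sum_{v(a)\cong p(s)}1/|\Mor_\Phi(a,-)| = |v^{-1}(p(s))| = \utilde{\Phi}([p(s)])$ (Lemma \ref{ALTCARD} again, which needs $\Phi$ tame over $X$) gives
\[
\utilde{S\Phi}([y]) = \sum_{s\in S,\ q(s)\cong y}\frac{|\Aut_X(p(s))|}{|\Mor_S(s,-)|}\,\utilde{\Phi}([p(s)]).
\]
Regrouping the sum by the isomorphism class $[x]$ of $p(s)$ and then by the isomorphism class $[s]$ in $S$, and using $|\Mor_S(s,-)| = |[s]|\,|\Aut_S(s)|$ from the proof of Lemma \ref{ALTCARD}, this becomes
\[
\utilde{S\Phi}([y]) = \sum_{[x]\in\u X}\utilde{S}_{[y][x]}\,\utilde{\Phi}([x]),
\qquad
\utilde{S}_{[y][x]} := \sum_{[s]\in\u{p^{-1}(x)}\cap\u{q^{-1}(y)}}\frac{|\Aut(x)|}{|\Aut(s)|}\ \in[0,\infty].
\]

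For step (ii) I would define $(\utilde{S}\psi)([y]) := \sum_{[x]\in\u X}\utilde{S}_{[y][x]}\,\psi([x])$, which is visibly linear once convergence is established. Convergence uses the following realization observation: for any $\varphi\maps\u X\to[0,\infty)$ there is a tame groupoid $\Phi_\varphi$ over $X$ with $\utilde{\Phi_\varphi} = \varphi$ --- namely $\coprod_{[x]}C_x$, where each $C_x$ is a disjoint union of one-object groupoids with finite automorphism groups chosen so that $|C_x| = \varphi([x])$, mapped into $X$ by the constant functor at $x$. Applying the formula of (i) to $\Phi_\varphi$ with $\varphi = |\psi|$ and invoking tameness of the span (so $S\Phi_\varphi$ is tame over $Y$) gives $\sum_{[x]}\utilde{S}_{[y][x]}|\psi([x])| = \utilde{S\Phi_\varphi}([y]) < \infty$ for every $[y]$; hence $\utilde{S}\psi\in\R^{\u Y}$, $\utilde{S}$ is a linear operator, and by (i) it satisfies $\utilde{S}\utilde{\Psi} = \utilde{S\Psi}$ for every tame groupoid $\Psi$ over $X$. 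For uniqueness (step (iii)), if $T\maps\R^{\u X}\to\R^{\u Y}$ is linear with $T\utilde{\Psi} = \utilde{S\Psi}$ for all tame $\Psi$ over $X$, then $T$ and $\utilde{S}$ agree on every vector $\utilde{\Psi}$; writing an arbitrary $\psi\in\R^{\u X}$ as $\utilde{\Phi_{\psi^+}} - \utilde{\Phi_{\psi^-}}$ with $\psi^\pm\maps\u X\to[0,\infty)$ (realization observation again) and using linearity forces $T = \utilde{S}$.

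I expect the weak-pullback cardinality computation of step (i) to be the main obstacle. The delicate points there are that a morphism of $S\Phi$ with prescribed source is parametrized \emph{freely} by the pair $(f,g)$ --- the connecting isomorphism being determined, not free --- and that one must keep clean track of the $|\Aut_X(p(s))|$-fold count of the datum $\alpha$ and of the passage between sums over objects and sums over isomorphism classes. One should also check, appealing to the basic groupoid facts collected in Appendix \ref{appendix}, that $S\Phi$ is again essentially small with finite homsets, so that all cardinalities in play are defined; granting that, steps (ii) and (iii) are routine.
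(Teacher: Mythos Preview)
Your argument is correct, and the core computation --- evaluating $|(q\pi_S)^{-1}(y)|$ via Lemma~\ref{ALTCARD} by observing that outgoing morphisms in the weak pullback are parametrized freely by a pair $(f,g)$ --- is exactly the computation the paper carries out.  The organization, however, is genuinely different.  The paper first reduces to skeletal $S,X,Y,\Psi_i$ and one-object $\Psi_i$ (via Lemmas~\ref{vectorswelldefined}, \ref{addvectors}, \ref{linearity}, \ref{skeletal}) and then verifies well-definedness by showing that a linear relation $\sum_i\alpha_i\utilde{\Psi_i}=0$ forces $\sum_i\alpha_i\utilde{S\Psi_i}=0$; the explicit matrix formula is extracted afterwards as the separate Theorem~\ref{matrix}.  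You instead compute $\utilde{S\Phi}([y])=\sum_{[x]}\utilde{S}_{[y][x]}\,\utilde{\Phi}([x])$ for an \emph{arbitrary} tame $\Phi$ in one shot, which makes well-definedness automatic (the operator is the matrix) and yields Theorem~\ref{matrix} as an immediate byproduct.  Your route is more direct; the paper's route isolates the reduction lemmas, which are reused elsewhere.

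One small caution: your application of Lemma~\ref{ALTCARD} sums over \emph{objects} of $(q\pi_S)^{-1}(y)$ and uses $|\Mor_S(s,-)|=|[s]|\,|\Aut_S(s)|$, which tacitly assumes the groupoids in play are small, not merely essentially small.  The paper sidesteps this by passing to skeletal representatives before invoking Lemma~\ref{ALTCARD}; you should either do the same at the outset (Lemmas~\ref{vectorswelldefined} and~\ref{skeletal} make this painless and leave your computation unchanged) or remark that one may replace everything by equivalent small groupoids.  With that adjustment, your steps (ii) and (iii) --- realizing an arbitrary nonnegative $\varphi$ as some $\utilde{\Phi_\varphi}$ and splitting $\psi=\psi^+-\psi^-$ --- coincide with the paper's treatment of convergence and uniqueness.
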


\begin{proof}
It is easy to see that these conditions uniquely determine
$\utilde{S}$.  Suppose $\psi \maps \underline{X} \to \R$ is
any nonnegative function.  Then we can find a groupoid $\Psi$
over $X$ such that $\utilde{\Psi} = \psi$.  So, $\utilde{S}$
is determined on nonnegative functions by the condition that
$\utilde{S}\utilde{\Psi} = \utilde{S\Psi}$.  Since every
function is a difference of two nonnegative functions and 
$\utilde{S}$ is linear, this uniquely determines $\utilde{S}$.

The real work is proving that $\utilde{S}$ is well-defined.  
For this, assume we have a collection $\lbrace v_i \maps \Psi_i \to X
\rbrace_{i\in I}$ of groupoids over $X$ and real numbers $\lbrace
\alpha_i \in \R \rbrace_{i\in I}$ such that
\begin{equation}
\label{assumption1}
 \sum_i \alpha_i \, \utilde{\Psi_i} = 0. 
\end{equation}
We need to show that
\begin{equation}
\label{fact0}
 \sum_i\alpha_i \, \utilde{S\Psi_i} = 0.
\end{equation}

We can simplify our task as follows.  First, recall that a {\bf
skeletal} groupoid is one where isomorphic objects are equal.  Every
groupoid is equivalent to a skeletal one.  Thanks to Lemmas
\ref{vectorswelldefined} and \ref{skeletal}, we may therefore assume
without loss of generality that $S$, $X$, $Y$ and all the groupoids
$\Psi_i$ are skeletal.

Second, recall that a skeletal groupoid is a coproduct of groupoids
with one object.  By Lemma \ref{addvectors}, degroupoidification
converts coproducts of groupoids over $X$ into sums of vectors.  Also,
by Lemma \ref{linearity}, the operation of taking weak pullback
distributes over coproduct.  As a result, we may assume without loss
of generality that each groupoid $\Psi_i$ has one object.  Write
$\ast_i$ for the one object of $\Psi_i$.

With these simplifying assumptions, Eq.\ \ref{assumption1} says
that for any $x \in X$,
\begin{equation}
\label{assumption2}
 0  =  \displaystyle{\sum_{i \in I} \alpha_i \, \utilde{\Psi_i}([x])} 
    =  \displaystyle{\sum_{i \in I} \alpha_i \, |v_i^{-1} (x)|}   
    = \displaystyle{\sum_{i \in J} \frac{\alpha_i}{|\Aut(\ast_i)|}}
\end{equation}
where $J$ is the collection of $i \in I$ such that $v_i(\ast_i)$ is
isomorphic to $x$.  Since all groupoids in sight
are now skeletal, this condition implies $v_i(\ast_i) = x$.

Now, to prove Eq. \ref{fact0}, we need to show that 
\[ \sum_{i \in I} \alpha_i \, \utilde{S\Psi_i}([y]) = 0\]
for any $y \in Y$.  But since the set $I$ is partitioned into sets
$J$, one for each $x \in X$, it suffices to show
\begin{equation}
\label{fact1}
\sum_{i \in J} \alpha_i \, \utilde{S\Psi_i}([y]) = 0 .
\end{equation}
for any fixed $x \in X$ and $y \in Y$.

To compute $\utilde{S\Psi_i}$, we need
to take this weak pullback:
\[
\xymatrix{
& & S\Psi_i \ar[dl]_{\pi_S}\ar[dr]^{\pi_{\Psi_i}} & \\
& S \ar[dl]_{q} \ar[dr]^{p} & & \Psi_i \ar[dl]_{v_i} \\
Y & & X &  
}
\]
We then have
\begin{equation}
\label{formula1}
    \utilde{S \Psi_i}([y]) = |(q \pi_S)^{-1}(y)| , 
\end{equation}
so to prove Eq.\ \ref{fact1} it suffices to show
\begin{equation}
\label{fact2}
\sum_{i \in J} \alpha_i \, |(q \pi_S)^{-1}(y)| = 0 .
\end{equation}

Using the definition of `weak pullback', and taking advantage of the
fact that $\Psi_i$ has just one object, which maps down to $x$, we can
see that an object of $S \Psi_i$ consists of an object $s \in S$ with
$p(s) = x$ together with an isomorphism $\alpha \maps x \to x$.  This
object of $S\Psi_i$ lies in $(q \pi_S)^{-1}(y)$ precisely when we also
have $q(s) = y$.

So, we may briefly say that an object of $(q\pi_S)^{-1}(y)$ is a pair
$(s, \alpha)$, where $s \in S$ has $p(s) = x$, $q(s) = y$, and
$\alpha$ is an element of $\Aut(x)$.  Since $S$ is skeletal, there is
a morphism between two such pairs only if they have the same first
entry.  A morphism from $(s,\alpha)$ to $(s,\alpha')$ then consists of
a morphism $f \in \Aut(s)$ and a morphism $g \in \Aut(\ast_i)$ such
that
\[
\xymatrix{
x \ar[r]^{\alpha}\ar[d]_{p(f)} & x\ar[d]^{v_i(g)} \\
x \ar[r]_{\alpha'} & x
}
\]
commutes.  

A morphism out of $(s,\alpha)$ thus consists of an arbitrary pair $f
\in \Aut(s)$, $g \in \Aut(\ast_i)$, since these determine the target
$(s,\alpha')$.  This fact and Lemma \ref{ALTCARD} allow us to compute:
\begin{equation}
\label{formula2}
\begin{array}{ccl}
\nonumber |(q\pi_S)^{-1}(y)| & = 
\displaystyle{ 
\sum_{(s,\alpha)\in (q\pi_S)^{-1}(y)}
\frac{1}{|\Mor((s,\alpha),-)|} 
}
\\
\\
& = 
\displaystyle{
\sum_{s \in p^{-1}(y) \cap q^{-1}(y)}
\frac{|\Aut(x)|}{|\Aut(s)||\Aut(\ast_i)|} \,.
} 
\end{array} 
\end{equation}
So, to prove Eq.\ \ref{fact2}, it suffices to show
\begin{equation}
\label{fact3}
\sum_{i \in J} \;\, \sum_{s \in p^{-1}(x) \cap q^{-1}(y) }
\frac{\alpha_i|\Aut(x)|}{|\Aut(s)||\Aut(\ast_i)|} = 0 \,.
\end{equation}
But this easily follows from Eq.\
\ref{assumption2}.  So, the operator $\utilde{S}$ is well defined.
\end{proof}

In Definition \ref{EQUIVALENCE} we recall the natural concept of
`equivalence' for spans of groupoids.  The next theorem says that our
process of turning spans of groupoids into linear operators sends
equivalent spans to the same operator:

\begin{thm}\label{linops_equivspans1}
Given equivalent spans
\[
\xymatrix{
& S\ar[dl]_{q_S} \ar[dr]^{p_S} &  & & T\ar[dl]_{q_T} \ar[dr]^{p_T} &\\
Y & & X & Y & & X
}
\]
the linear operators $\utilde{S}$ and $\utilde{T}$ are equal.
\end{thm}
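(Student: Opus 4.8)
The plan is to reduce the statement about equivalent spans to the already-proved Theorem \ref{PROCESS1}, which says that $\utilde{S}$ is the unique linear operator with $\utilde{S}\utilde{\Psi} = \utilde{S\Psi}$ for all tame groupoids $\Psi$ over $X$. By uniqueness, it suffices to show that $\utilde{T}$ also satisfies this defining property with $S$ replaced by $T$ fails to help directly; instead I would show $\utilde{T}\utilde{\Psi} = \utilde{S\Psi}$ for every tame $\Psi$ over $X$, and then invoke uniqueness to conclude $\utilde{S} = \utilde{T}$. Concretely: given a tame groupoid $v \maps \Psi \to X$, I want to produce an equivalence of groupoids over $Y$ between $S\Psi$ and $T\Psi$, so that Lemma \ref{vectorswelldefined} gives $\utilde{S\Psi} = \utilde{T\Psi}$, hence $\utilde{S}\utilde{\Psi} = \utilde{S\Psi} = \utilde{T\Psi} = \utilde{T}\utilde{\Psi}$.

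\textbf{Key steps, in order.} First, unpack what it means for the spans $S$ and $T$ to be equivalent (Definition \ref{EQUIVALENCE}): there is an equivalence of groupoids $F \maps S \to T$ together with natural isomorphisms filling the two triangles, i.e.\ $q_T F \cong q_S$ and $p_T F \cong p_S$. Second, use this data to build a functor $\tilde{F} \maps S\Psi \to T\Psi$ on weak pullbacks. An object of $S\Psi$ is a triple $(s, z, \alpha)$ with $s \in S$, $z \in \Psi$, and $\alpha \maps p_S(s) \to v(z)$ in $X$; I send it to $(F(s), z, \alpha')$, where $\alpha' \maps p_T(F(s)) \to v(z)$ is the composite of $\alpha$ with the component of the natural isomorphism $p_S \cong p_T F$ at $s$. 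Third, check that $\tilde F$ is a well-defined functor and is itself an equivalence of groupoids — this should follow formally from $F$ being an equivalence, since weak pullback is a bicategorical limit and hence preserves equivalences, but I would spell out an essential-inverse functor and the unit/counit natural isomorphisms to keep things self-contained (alternatively, cite the relevant lemma from Appendix \ref{appendix} about weak pullbacks respecting equivalences, e.g.\ Lemma \ref{ESSENTIALPULLBACK}). Fourth, verify that $\tilde F$ is an equivalence \emph{over $Y$}: the relevant triangle $q_Y^{S\Psi} \cong q_Y^{T\Psi} \circ \tilde F$ commutes up to natural isomorphism, which reduces to the natural isomorphism $q_S \cong q_T F$ from the equivalence-of-spans data. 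Fifth, apply Lemma \ref{vectorswelldefined} to conclude $\utilde{S\Psi} = \utilde{T\Psi}$ as vectors in $\R^{\underline Y}$; in particular tameness of $S$ transfers the needed finiteness to the $T$ side so that $\utilde{T\Psi}$ is defined (and indeed one should first note that $T$ is tame because $S$ is — again via the equivalence $S\Psi \simeq T\Psi$ over $Y$ and Lemma \ref{vectorswelldefined}). Finally, since $\utilde{S}\utilde{\Psi} = \utilde{T}\utilde{\Psi}$ for all tame $\Psi$, and the vectors $\utilde\Psi$ span $\R^{\underline X}$ (indeed every nonnegative function is some $\utilde\Psi$, as in the proof of Theorem \ref{PROCESS1}), linearity forces $\utilde{S} = \utilde{T}$.

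\textbf{Main obstacle.} The routine bookkeeping is in Step 3 — confirming that the induced functor on weak pullbacks is genuinely an equivalence, and that all the naturality squares commute. The subtle point is that $F$ being an equivalence is not the same as $F$ being an isomorphism, so I cannot just transport triples along a bijection; I need an essential inverse $G \maps T \to S$ and the natural isomorphisms $GF \cong \id_S$, $FG \cong \id_T$, and then check these assemble into natural isomorphisms $\tilde G \tilde F \cong \id_{S\Psi}$ and $\tilde F \tilde G \cong \id_{T\Psi}$ compatible with the legs over $Y$. This is the kind of diagram-chase that is conceptually clear but fiddly. If Appendix \ref{appendix} already contains the statement that weak pullback of groupoids is invariant under equivalence of the input diagrams (which Lemma \ref{ESSENTIALPULLBACK} appears to be, given its use in Lemma \ref{vectorswelldefined}), then the cleanest route is to observe that the two cospans $S \to X \leftarrow \Psi$ and $T \to X \leftarrow \Psi$ are equivalent as diagrams — the equivalence $F$ plus the identity on $\Psi$ plus the natural isomorphism $p_S \cong p_T F$ — and invoke that lemma directly, reducing the whole argument to two or three lines. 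I would structure the written proof to take that shortcut if the lemma permits, and only fall back to the explicit construction otherwise.
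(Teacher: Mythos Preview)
Your approach is essentially the paper's: show that for any tame $\Psi$ over $X$ the weak pullbacks $S\Psi$ and $T\Psi$ are equivalent as groupoids over $Y$, then deduce $\utilde{S\Psi}=\utilde{T\Psi}$ and hence $\utilde{S}=\utilde{T}$. The only correction is a labeling issue: the Appendix lemma you want for ``weak pullback is invariant under equivalence of cospans'' is Lemma~\ref{skeletal}, not Lemma~\ref{ESSENTIALPULLBACK}; the paper invokes \ref{skeletal} to get $S\Psi\simeq T\Psi$, then uses \ref{ESSENTIALPULLBACK} and \ref{EQUIVGRPD} (packaged, as you note, in Lemma~\ref{vectorswelldefined}) to finish.
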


\begin{proof}
Since the spans are equivalent, there is a functor providing an
equivalence of groupoids $F \maps S \to T$ along with a pair of
natural isomorphisms $\alpha \maps p_S \Rightarrow p_TF$ and 
$\beta \maps q_S \Rightarrow q_TF$.  Thus, the diagrams
\[
\xymatrix{
S \ar[dr] && \Phi \ar[dl] & & T \ar[dr] && \Phi \ar[dl] \\
& X & && & X &
}
\]
are equivalent pointwise.  It follows from Lemma \ref{skeletal} that
the weak pullbacks $S\Psi$ and $T\Psi$ are equivalent groupoids with
the equivalence given by a functor $\tilde{F} \maps S\Psi \to T\Psi$.
From the universal property of weak pullbacks, along with $F$, we
obtain a natural transformation $\gamma \maps F\pi_S \Rightarrow
\pi_T\tilde{F}$.  We then have a triangle
\[ \def\objectstyle{\scriptstyle}
  \def\labelstyle{\scriptstyle}
   \xy
   (20,20)*+{S\Psi}="1";
   (-20,20)*+{T\Psi}="2";
   (10,0)*+{S}="3";
   (-10,0)*+{T}="4";
   (0,-20)*+{Y}="5";
        {\ar_{\tilde{F}} "1";"2"};
        {\ar^{\pi_S} "1";"3"};
        {\ar_{\pi_T} "2";"4"};
        {\ar^{q_S} "3";"5"};
        {\ar_{q_T} "4";"5"};
        {\ar_{F} "3";"4"};
        {\ar@{=>}_<<{\scriptstyle \gamma} (2,11); (-2,9)};
        {\ar@{=>}_<<{\scriptstyle \beta} (2,-9); (-2,-11)};

\endxy
\]
where the composite of $\gamma$ and $\beta$ is $(q_T \cdot
\gamma)^{-1}\beta \maps q_S \pi_S \Rightarrow q_T\pi_T\tilde{F}$.
Here $\cdot$ stands for whiskering: see Definition \ref{whiskering}.

We can now apply Lemma \ref{ESSENTIALPULLBACK}.  Thus, for every $y \in
Y$, the essential inverse images $(q_S\pi_S)^{-1}(y)$ and
$(q_T\pi_T)^{-1}(y)$ are equivalent.  It follows from Lemma
\ref{EQUIVGRPD} that for each $y \in Y$, the groupoid cardinalities
$|(q_S\pi_S)^{-1}(y)|$ and $|(q_T\pi_T)^{-1}(y)|$ are equal.  Thus,
the linear operators $\utilde{S}$ and $\utilde{T}$ are the same.
\end{proof}

\subsection{An Explicit Formula}
\label{explicit}

Our calculations in the proof of Theorem \ref{PROCESS1} yield an explicit
formula for the operator coming from a tame span, and a criterion for
when a span is tame:

\begin{thm}
\label{matrix}
A span of groupoids
\[
\xymatrix{
& S\ar[dl]_{q} \ar[dr]^{p} & \\
Y & & X
}
\]
is tame if and only if:
\begin{enumerate}
\item
For any object $y\in Y$, the groupoid $p^{-1}(x)\cap q^{-1}(y)$ is
nonempty for objects $x$ in only a finite number of isomorphism classes
of $X$.
\item For every $x \in X$ and $y \in Y$, the groupoid
$p^{-1}(x)\cap q^{-1}(y)$ is tame. 
\end{enumerate}
Here $p^{-1}(x)\cap q^{-1}(y)$ is the subgroupoid of $S$ whose objects
lie in both $p^{-1}(x)$ and $q^{-1}(y)$, and whose morphisms lie in
both $p^{-1}(x)$ and $q^{-1}(y)$.

If $S$ is tame, then for any $\psi \in \R^{\underline X}$ we
have
\[   
(\utilde{S} \psi)([y]) = 
\sum_{[x] \in \u{X}} \;\,
\sum_{[s]\in\u{p^{-1}(x)}\bigcap \u{q^{-1}(y)}}
\frac{|\Aut(x)|}{|\Aut(s)|} \,\, \psi([x]) \,. 
\]
\end{thm}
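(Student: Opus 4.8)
The plan is to extract everything from the computations already carried out in the proof of Theorem \ref{PROCESS1}, since that proof implicitly establishes both the tameness criterion and the explicit formula; here we simply need to organize and state what was shown. First I would recall that $\utilde{S}$ is, by construction, the unique linear operator with $\utilde{S}\utilde{\Psi} = \utilde{S\Psi}$ for tame $\Psi$ over $X$. To read off the matrix entries, I would apply this to the generating groupoid of a basis vector: for a fixed isomorphism class $[x_0] \in \u{X}$, let $\Psi$ be the one-object groupoid $1/\!/\Aut(x_0)$ mapping to $x_0$, so that $\utilde{\Psi} = \frac{1}{|\Aut(x_0)|}\,\delta_{[x_0]}$ (the function equal to $1/|\Aut(x_0)|$ at $[x_0]$ and $0$ elsewhere). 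Then, exactly as in Eq.\ \ref{formula1} and Eq.\ \ref{formula2} of the proof of Theorem \ref{PROCESS1}, the weak pullback $S\Psi$ has $(q\pi_S)^{-1}(y)$ with objects pairs $(s,\alpha)$ where $s \in p^{-1}(x_0)\cap q^{-1}(y)$ and $\alpha \in \Aut(x_0)$, and Lemma \ref{ALTCARD} gives
\[
\utilde{S\Psi}([y]) = |(q\pi_S)^{-1}(y)| = \sum_{[s] \in \u{p^{-1}(x_0)}\cap\u{q^{-1}(y)}} \frac{|\Aut(x_0)|}{|\Aut(s)|}\cdot\frac{1}{|\Aut(x_0)|} = \sum_{[s]} \frac{1}{|\Aut(s)|}\,,
\]
where here I have to be slightly careful to pass from the sum over objects $s$ to the sum over isomorphism classes $[s]$, using the same orbit-counting argument (the bijection $[s]\times\Aut(s) \leftrightarrow \Mor(s,-)$) as in Lemma \ref{ALTCARD}. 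Comparing with $\utilde{S}\utilde{\Psi}([y]) = \frac{1}{|\Aut(x_0)|}(\utilde{S})_{[y][x_0]}$ yields the claimed matrix-entry formula, and then linearity and the expansion $\psi = \sum_{[x]} \psi([x])\,\delta_{[x]}$ give the general formula for $(\utilde{S}\psi)([y])$.

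For the tameness criterion, I would argue as follows. Recall $S$ is tame iff $q\pi_S\maps S\Phi \to Y$ is tame whenever $v\maps\Phi\to X$ is tame. For the "if" direction, suppose conditions (1) and (2) hold and $\Phi$ is tame over $X$; I would compute $|(q\pi_S)^{-1}(y)|$ by the same weak-pullback analysis as above but now with general $\Phi$: an object of $(q\pi_S)^{-1}(y)$ is a triple $(s,b,\alpha)$ with $s\in q^{-1}(y)$, $b\in\Phi$, and $\alpha\maps p(s)\to v(b)$, and grouping by the isomorphism class $[x] = [p(s)] = [v(b)]$ in $X$, a short calculation (again via Lemma \ref{ALTCARD}) gives
\[
|(q\pi_S)^{-1}(y)| = \sum_{[x]\in\u{X}} |p^{-1}(x)\cap q^{-1}(y)| \cdot |v^{-1}(x)|\,.
\]
By condition (1) only finitely many $[x]$ contribute, by condition (2) each factor $|p^{-1}(x)\cap q^{-1}(y)|$ is finite, and by tameness of $\Phi$ each $|v^{-1}(x)|$ is finite; hence the sum is finite, so $q\pi_S$ is tame and $S$ is tame. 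For the "only if" direction I would prove the contrapositive: if (2) fails, pick $x,y$ with $p^{-1}(x)\cap q^{-1}(y)$ not tame and take $\Phi$ to be $1/\!/\Aut(x)$ over $x$; the displayed formula shows $|(q\pi_S)^{-1}(y)| = \infty$, so $S$ is not tame. If (1) fails, pick $y$ with infinitely many classes $[x]$ having $p^{-1}(x)\cap q^{-1}(y)$ nonempty, and take $\Phi = \sum_{[x]} 1/\!/\Aut(x)$ summed over those classes (this is tame, with $|v^{-1}(x)|$ a positive number for each such $x$); then the formula again forces $|(q\pi_S)^{-1}(y)| = \infty$.

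The main obstacle I anticipate is not conceptual but bookkeeping: getting the automorphism-group factors exactly right when passing between the "sum over objects" and "sum over isomorphism classes" pictures in the weak pullback, particularly tracking how $|\Aut(x)|$, $|\Aut(s)|$, and the automorphisms coming from $\Phi$ interact in a morphism of the weak pullback (the commuting-square condition couples $\Aut(s)$ and $\Aut(b)$ through $p$ and $v$). This is exactly the computation done in Eq.\ \ref{formula2}, so I would lean on that and on Lemma \ref{ALTCARD} and Lemma \ref{vectorswelldefined} (to reduce to skeletal groupoids, making "$v(b)\cong x$" into "$v(b) = x$") rather than redoing it from scratch. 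A secondary subtlety is justifying the interchange of the (possibly infinite) sum over $[x]$ with the groupoid-cardinality sum defining $|(q\pi_S)^{-1}(y)|$; since all terms are nonnegative, this is just Tonelli for sums and needs only a sentence.
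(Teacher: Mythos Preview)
Your overall strategy matches the paper's: both extract the formula from Eq.\ \ref{formula1} and \ref{formula2} in the proof of Theorem \ref{PROCESS1}, and both reduce to skeletal groupoids. Your treatment of the ``if'' direction of the tameness criterion is more explicit than the paper's (which leaves sufficiency as an exercise), and that part is fine.

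There is, however, a real gap in your ``only if'' argument for condition (1). Your witness $\Phi = \sum_{[x]} 1/\!/\Aut(x)$ over the infinitely many offending classes is tame, but the resulting sum
\[
|(q\pi_S)^{-1}(y)| \;=\; \sum_{[x]} |\Aut(x)|\cdot|p^{-1}(x)\cap q^{-1}(y)|\cdot|v^{-1}(x)|
\;=\; \sum_{[x]} |p^{-1}(x)\cap q^{-1}(y)|
\]
is an infinite sum of \emph{positive} terms, and nothing prevents it from converging (the cardinalities $|p^{-1}(x)\cap q^{-1}(y)|$ could decay). So this particular $\Phi$ does not force non-tameness. The paper avoids this by using the fact, already invoked in the proof of Theorem \ref{PROCESS1}, that \emph{every} nonnegative $\psi \in \R^{\u{X}}$ arises as $\utilde{\Psi}$ for some tame $\Psi$; then one simply chooses $\psi([x])$ large enough on the offending classes (say $\psi([x]) \ge 1/(|\Aut(x)|\cdot|p^{-1}(x)\cap q^{-1}(y)|)$) to make each term at least $1$. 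Your argument is easily repaired along these lines, but as written it does not go through.

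A smaller point: the intermediate formula you state, $|(q\pi_S)^{-1}(y)| = \sum_{[x]} |p^{-1}(x)\cap q^{-1}(y)|\cdot|v^{-1}(x)|$, is off by a factor of $|\Aut(x)|$ (compare Eq.\ \ref{formula2}); this is exactly the bookkeeping issue you flagged, and it does not affect the finiteness argument in the ``if'' direction, but you should correct it since the same expression feeds into the ``only if'' step above.
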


\begin{proof} First suppose the span $S$ is tame
and $v \maps \Psi \to X$ is a tame groupoid over $X$.  Equations
\ref{formula1} and \ref{formula2} show that if $S,X,Y,$ and $\Psi$ are
skeletal, and $\Psi$ has just one object $\ast$, then
\[   \utilde{S \Psi}([y]) = \sum_{s \in p^{-1}(x) \cap q^{-1}(y)}
\frac{|\Aut(v(\ast))|}{|\Aut(s)| |\Aut(\ast)|}  \]
On the other hand, 
\[   \utilde{\Psi}([x]) = 
\begin{cases}\displaystyle{\frac{1}{|\Aut(\ast)|}} 
              & \textrm{if} \; v(\ast) = x \\ 
                                           \\
              0                     & \textrm{otherwise.}  
\end{cases}
\]
So in this case, writing $\utilde{\Psi}$ as $\psi$, we have
\[   
(\utilde{S} \psi)([y]) = 
\sum_{[x] \in X} \;\,
\sum_{[s]\in p^{-1}(x) \bigcap q^{-1}(y)}
\frac{|\Aut(x)|}{|\Aut(s)|} \,\, \psi([x]) \,. 
\]
Since both sides are linear in $\psi$, and every nonnegative function
in $\R^{\u{X}}$ is a pointwise convergent nonnegative linear
combination of functions of the form $\psi = \utilde{\Psi}$ with
$\Psi$ as above, the above equation in fact holds for {\it all} $\psi
\in \R^{\u{X}}$.

Since all groupoids in sight are skeletal, we may equivalently 
write the above equation as
\[   
(\utilde{S} \psi)([y]) = 
\sum_{[x] \in \u{X}} \;\,
\sum_{[s]\in\u{p^{-1}(x)}\bigcap \u{q^{-1}(y)}}
\frac{|\Aut(x)|}{|\Aut(s)|} \,\, \psi([x]) \,. 
\]
The advantage of this formulation is that now both sides are 
unchanged when we replace $X$ and $Y$ by equivalent groupoids,
and replace $S$ by an equivalent span.   So, this equation holds
for all tame spans, as was to be shown.

If the span $S$ is tame, the sum above must converge for all 
functions $\psi$ of the form $\psi = \utilde{\Psi}$.  Any 
nonnegative function $\psi \maps \u{X} \to \R$ is of this form.
For the sum above to converge for {\it all} nonnegative $\psi$, 
this sum:
\[  \sum_{[s]\in\u{p^{-1}(x)}\bigcap \u{q^{-1}(y)}}
\frac{|\Aut(x)|} {|\Aut(s)|}\]
must have the following two properties:
\begin{enumerate}
\item
For any object $y \in Y$, it is nonzero only for objects $x$ in 
a finite number of isomorphism classes of $X$.
\item For every $x \in X$ and $y \in Y$, it converges to a finite
number.
\end{enumerate}
These conditions are equivalent to conditions 1) and 2) in the 
statement of the theorem.  We leave it as an exercise to check that
these conditions are not only necessary but also sufficient for 
$S$ to be tame.
\end{proof}

The previous theorem has many nice consequences.  For example:

\begin{proposition}
\label{add_spans}
Suppose $S$ and $T$ are tame spans from a groupoid $X$ to a groupoid
$Y$.  Then $\utilde{S+T} = \utilde{S} + \utilde{T}$.
\end{proposition}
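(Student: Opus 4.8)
The plan is to apply the explicit formula of Theorem \ref{matrix} directly, using the fact that a coproduct of groupoids is a disjoint union: the two summands sit inside $S+T$ as full subcategories with no morphisms between them. Write $p_S,q_S$ and $p_T,q_T$ for the legs of $S$ and $T$, and $p,q$ for the induced legs of $S+T$. Since $p$ and $q$ restrict to $p_S,q_S$ on $S$ and to $p_T,q_T$ on $T$, for every $x\in X$ and $y\in Y$ there is an identification of subgroupoids
\[
p^{-1}(x)\cap q^{-1}(y) \;=\; \bigl(p_S^{-1}(x)\cap q_S^{-1}(y)\bigr) \;+\; \bigl(p_T^{-1}(x)\cap q_T^{-1}(y)\bigr),
\]
again a coproduct of groupoids, and the automorphism group of an object on the left is the same whether computed in $S+T$ or in the summand containing it. This is essentially part (1) of Lemma \ref{linearity} in the present situation, together with the disjointness of coproducts of groupoids, so it costs no new work.

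First I would verify that $S+T$ is tame using the criterion of Theorem \ref{matrix}. For condition (1), the set of isomorphism classes $[x]$ for which $p^{-1}(x)\cap q^{-1}(y)$ is nonempty is the union of the corresponding sets for $S$ and for $T$; each is finite since $S$ and $T$ are tame, and a union of two finite sets is finite. For condition (2), groupoid cardinality is additive under coproduct (immediate from the definition, and exactly what was used in Lemma \ref{addvectors}), so $|p^{-1}(x)\cap q^{-1}(y)|$ is a sum of two finite numbers and hence finite. Therefore $\utilde{S+T}$ is defined.

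Next I would compute $\utilde{S+T}$ from the explicit formula. Inside $\u{S+T}=\u{S}\sqcup\u{T}$, the index set $\u{p^{-1}(x)}\bigcap\u{q^{-1}(y)}$ is the disjoint union of $\u{p_S^{-1}(x)}\bigcap\u{q_S^{-1}(y)}$ and $\u{p_T^{-1}(x)}\bigcap\u{q_T^{-1}(y)}$, and on each piece the summand $|\Aut(x)|/|\Aut(s)|$ coincides with the one appearing in the formula for $\utilde{S}$, respectively $\utilde{T}$. Splitting the inner sum accordingly gives, for every $\psi\in\R^{\underline X}$ and every $y\in Y$,
\[
(\utilde{S+T}\,\psi)([y]) = (\utilde{S}\,\psi)([y]) + (\utilde{T}\,\psi)([y]),
\]
which is the desired equality of operators.

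The only point requiring care is the claimed decomposition of $p^{-1}(x)\cap q^{-1}(y)$ for the coproduct span: one must check there are no ``mixed'' objects or morphisms, i.e.\ that forming the relevant essential preimages commutes with coproduct in the span. As noted, this is immediate from the disjointness of coproducts of groupoids and is the content of Lemma \ref{linearity}(1) specialized here, so it is the main thing to state but not a genuine obstacle.
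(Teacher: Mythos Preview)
Your proof is correct and follows exactly the approach the paper takes: the paper's own proof is the single sentence ``This follows from the explicit formula given in Theorem \ref{matrix},'' and you have simply spelled out that computation in detail. One small remark: your appeal to Lemma \ref{linearity}(1) is not quite the right citation (that lemma concerns $S(\sum_i\Psi_i)$ rather than $(\sum_i S_i)\Psi$), but the fact you need---that essential preimages in a coproduct of groupoids split as a coproduct---is immediate from the definition of coproduct, as you also say.
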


\begin{proof}
This follows from the explicit formula given in Theorem \ref{matrix}.
\end{proof}

\section{Properties of Degroupoidification}
\label{properties}

In this section we prove all the remaining results stated in Section
\ref{degroupoidification}.  We start with results about scalar
multiplication.  Then we show that degroupoidification is a functor.
Finally, we prove the results about inner products and adjoints.

\subsection{Scalar Multiplication}
\label{scalar_mult}

To prove facts about scalar multiplication, we use the following lemma:

\begin{lemma}\label{essinverseproduct} 
Given a groupoid $\Lambda$ and a functor between groupoids $p\maps X\to Y$,
then the functor $c\times p \maps \Lambda\times Y\to 1\times X$ (where
$c\maps \Lambda\to 1$ is the unique morphism from $\Lambda$ to the terminal
groupoid $1$) satisfies:
\[ |(c\times p)^{-1}(1, x)|=|\Lambda||p^{-1}(x)| \]
for all $x\in X$.
\end{lemma}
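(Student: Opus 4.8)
The plan is to show that the essential inverse image $(c\times p)^{-1}(1,x)$ is literally the product groupoid $\Lambda\times p^{-1}(x)$, and then to invoke the multiplicativity of groupoid cardinality under cartesian products. So the whole argument reduces to a definition‑chase followed by one standard fact.

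First I would unwind the definition of the essential inverse image. Write $\ast$ for the unique object of the terminal groupoid $1$, so that the point $(1,x)\in 1\times X$ named in the statement is really $(\ast,x)$. By definition an object of $(c\times p)^{-1}(1,x)$ is an object $(\lambda,a)$ of $\Lambda\times Y$ carrying no extra data, subject only to the condition $(c\times p)(\lambda,a)=(\ast,\,p(a))\iso(\ast,x)$ in $1\times X$. Since $1$ has only the identity morphism on $\ast$, an isomorphism in $1\times X$ from $(\ast,p(a))$ to $(\ast,x)$ is just an isomorphism $p(a)\to x$ in $X$; in particular such an isomorphism exists precisely when $a$ is an object of $p^{-1}(x)$. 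A morphism of $(c\times p)^{-1}(1,x)$ is, again by definition, \emph{any} morphism of $\Lambda\times Y$ between two such objects, i.e.\ a pair $(g,h)$ with $g\in\Mor(\Lambda)$ and $h\in\Mor(Y)$; and because $p^{-1}(x)$ is the full subgroupoid of $Y$ on those objects, such a pair is exactly a morphism of $\Lambda\times p^{-1}(x)$. Hence $(c\times p)^{-1}(1,x)$ and $\Lambda\times p^{-1}(x)$ have the same objects and the same morphisms: they are equal (a fortiori equivalent) as groupoids.

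Second, I would record the multiplicativity of groupoid cardinality under cartesian product: for any groupoids $A$ and $B$, the isomorphism classes of $A\times B$ are precisely the pairs $[(a,b)]$, and $\Aut(a,b)\iso\Aut(a)\times\Aut(b)$, so the defining sum factors and $|A\times B|=|A|\,|B|$ (with the evident conventions when a factor is infinite). This is the same bookkeeping already used implicitly for $\Lambda\times\Psi$ in Section \ref{degroupoidification}. Applying it with $A=\Lambda$ and $B=p^{-1}(x)$, and combining with the identification of the previous paragraph together with Lemma \ref{EQUIVGRPD} (cardinality is invariant under equivalence), gives
\[ |(c\times p)^{-1}(1,x)| \;=\; |\Lambda\times p^{-1}(x)| \;=\; |\Lambda|\,|p^{-1}(x)|, \]
which is the claim.

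There is essentially no obstacle here; the one point that deserves care is the remark that, because $1$ is terminal, the "specified isomorphism" datum which a weak pullback over the codomain would normally carry contributes nothing — the unique object of $1$ forces a unique such isomorphism — so the weak fiber collapses to an honest product. Everything after that is routine.
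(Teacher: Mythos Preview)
Your proof is correct and follows essentially the same route as the paper's: both unwind the definition of the essential inverse image to see that its objects are pairs $(\lambda,y)$ with $y\in p^{-1}(x)$, and then use that $\Aut(\lambda,y)\iso\Aut(\lambda)\times\Aut(y)$ to factor the cardinality sum. You package this as ``the fibre equals $\Lambda\times p^{-1}(x)$, and cardinality is multiplicative under products'', while the paper performs that same factorisation inline without naming the product explicitly; the underlying computation is identical.
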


\begin{proof}
Recall that by definition of essential inverse 
\[(c\times p)^{-1}(1, x)=
\{(\lambda,y)\in \Lambda\times Y\mid \, \exists 
\gamma \maps (c\times p)(\lambda,y)\to (1,x)\}.\]
We notice that the element $\lambda$ plays no real role in determining
the morphism $\gamma$, and $(\lambda,y)\in (c\times p)^{-1}(1,x)$ for
all $\lambda$ if and only if $y\in p^{-1}(x)$. Now consider the
groupoid cardinality of this groupoid. By definition we have
\[ |(c\times p)^{-1}(1, x)| 
= \sum_{[(\lambda,y)]}\frac{1}{|\Aut (\lambda,y)|} \]
Since we are working over the product $\Lambda\times Y$, an
automorphism of $(\lambda,y)$ is automorphism of $\lambda$ together
with an automorphism of $y$. It follows that
\[   |\Aut (\lambda,y)| =  |\Aut(\lambda)||\Aut(y)|.  \]
For a given $y\in p^{-1}(x)$ we can combine all the terms containing
$|\Aut(y)|$ to obtain the sum
\[|(c\times p)^{-1}(1, x)| = 
\sum_{[y]\in p^{-1}(x)} 
\left(\sum_{[\lambda]}\frac{1}{|\Aut(\lambda)|}\right)\frac{1}{|\Aut(y)|}\]
which then after factoring is equal to $|\Lambda||p^{-1}(x)|$, as desired.
\end{proof}

\begin{proposition}
\label{scalarmult1}
Given a groupoid $\Lambda$ and a groupoid over $X$, 
say $v \maps \Psi \to X$, the groupoid
$\Lambda \times \Psi$ over $X$ satisfies
\[\utilde{\Lambda \times \Psi} = |\Lambda|\utilde{\Psi}.\]
\end{proposition}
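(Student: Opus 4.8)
The plan is to verify the asserted equality of vectors in $\R^{\underline{X}}$ by evaluating both sides at an arbitrary isomorphism class $[x]\in\underline{X}$ and appealing directly to Lemma \ref{essinverseproduct}. First I would unwind the left-hand side: by Definition \ref{degroupoidification_of_vectors}, the vector $\utilde{\Lambda\times\Psi}\in\R^{\underline{X}}$ is given by $\utilde{\Lambda\times\Psi}([x]) = |(v\pi_2)^{-1}(x)|$, where $v\pi_2\maps\Lambda\times\Psi\to X$ is the structure functor exhibiting $\Lambda\times\Psi$ as a groupoid over $X$, and $\pi_2\maps\Lambda\times\Psi\to\Psi$ is projection onto the second factor.

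Next I would make the bookkeeping identification that lets Lemma \ref{essinverseproduct} apply. The terminal groupoid $1$ gives a canonical isomorphism $1\times X\cong X$, and under this isomorphism the functor $c\times v\maps\Lambda\times\Psi\to 1\times X$ (with $c\maps\Lambda\to 1$ the unique functor to the terminal groupoid) corresponds exactly to $v\pi_2\maps\Lambda\times\Psi\to X$. Consequently the essential inverse image $(v\pi_2)^{-1}(x)$ is equivalent to $(c\times v)^{-1}(1,x)$, so by Lemma \ref{EQUIVGRPD} their groupoid cardinalities agree. Now Lemma \ref{essinverseproduct}, applied to $v\maps\Psi\to X$, gives
\[
|(c\times v)^{-1}(1,x)| = |\Lambda|\,|v^{-1}(x)|.
\]
Stringing these together yields $\utilde{\Lambda\times\Psi}([x]) = |(v\pi_2)^{-1}(x)| = |\Lambda|\,|v^{-1}(x)| = |\Lambda|\,\utilde{\Psi}([x])$ for every $[x]\in\underline{X}$, which is the claimed identity. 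Along the way I would note that the same computation shows $v\pi_2$ is tame whenever $v$ is tame and $|\Lambda|<\infty$, so both sides of the equation are legitimately defined.

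The main obstacle here is mild: essentially all the real content has already been isolated in Lemma \ref{essinverseproduct}, so the only thing to get right is the clean identification of ``essential inverse image of $x$ for the groupoid $\Lambda\times\Psi$ over $X$'' with ``essential inverse image of $(1,x)$ for $\Lambda\times\Psi$ over $1\times X$'' — i.e.\ checking that reindexing along $1\times X\cong X$ does not disturb cardinalities, which is immediate from Lemma \ref{vectorswelldefined}. The remaining steps are routine.
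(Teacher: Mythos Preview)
Your proposal is correct and is exactly the approach the paper takes: the paper's proof consists of the single sentence ``This follows from Lemma \ref{essinverseproduct},'' and you have simply spelled out the routine identification of $(v\pi_2)^{-1}(x)$ with $(c\times v)^{-1}(1,x)$ that makes the lemma apply. There is nothing to add.
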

\begin{proof} This follows from Lemma \ref{essinverseproduct}.
\end{proof}

\begin{proposition}
\label{scalarmult2}
Given a tame groupoid $\Lambda$ and a tame span
\[
\xymatrix{
& S\ar[dl] \ar[dr] & \\
Y & & X
}
\]
then $\Lambda \times S$ is tame and
\[\utilde{\Lambda \times S} = |\Lambda| \, \utilde{S}.\]
\end{proposition}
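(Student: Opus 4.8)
The plan is to deduce this exactly as Proposition \ref{add_spans} was deduced, namely from the explicit tameness criterion and operator formula of Theorem \ref{matrix}, combined with the multiplicativity of groupoid cardinality under cartesian product that underlies Lemma \ref{essinverseproduct}. The first step is to understand the span $\Lambda\times S$, whose legs are $q\pi_2$ and $p\pi_2$ with $\pi_2\maps\Lambda\times S\to S$ the projection. Since $p\pi_2$ and $q\pi_2$ ignore the $\Lambda$-coordinate, for each $x\in X$ and $y\in Y$ the subgroupoid $(p\pi_2)^{-1}(x)\cap(q\pi_2)^{-1}(y)$ of $\Lambda\times S$ is precisely $\Lambda\times\bigl(p^{-1}(x)\cap q^{-1}(y)\bigr)$. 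Because an automorphism of a pair $(\lambda,s)$ is a pair of automorphisms, $\u{\Lambda\times(p^{-1}(x)\cap q^{-1}(y))}\cong\u\Lambda\times\u{p^{-1}(x)\cap q^{-1}(y)}$ and $|\Aut(\lambda,s)|=|\Aut(\lambda)|\,|\Aut(s)|$, so in particular $|\Lambda\times(p^{-1}(x)\cap q^{-1}(y))|=|\Lambda|\,|p^{-1}(x)\cap q^{-1}(y)|$, which is the content of Lemma \ref{essinverseproduct} applied to the identity functor.

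Next I would check that $\Lambda\times S$ is tame using the criterion in Theorem \ref{matrix}. Condition 2 is immediate from the product formula above, since $|\Lambda|<\infty$ and $|p^{-1}(x)\cap q^{-1}(y)|<\infty$ for all $x,y$ because $S$ is tame. Condition 1 asks that, for each $y$, the groupoid $\Lambda\times(p^{-1}(x)\cap q^{-1}(y))$ be nonempty for only finitely many isomorphism classes $[x]$: if $\Lambda$ is empty this holds vacuously (and then $\Lambda\times S$ is the empty span and $|\Lambda|=0$, so the asserted identity is trivially true), while if $\Lambda$ is nonempty the product is nonempty exactly when $p^{-1}(x)\cap q^{-1}(y)$ is, so condition 1 for $\Lambda\times S$ is inherited from condition 1 for $S$. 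Hence $\Lambda\times S$ is tame.

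Finally I would compute $\utilde{\Lambda\times S}$ directly from the formula in Theorem \ref{matrix}: for $\psi\in\R^{\u X}$ and $y\in Y$,
\[
(\utilde{\Lambda\times S}\,\psi)([y])=\sum_{[x]\in\u X}\;\sum_{[(\lambda,s)]\in\u\Lambda\times\u{p^{-1}(x)\cap q^{-1}(y)}}\frac{|\Aut(x)|}{|\Aut(\lambda)|\,|\Aut(s)|}\,\psi([x]).
\]
When $\psi\ge 0$ all terms are nonnegative, so the iterated sum may be reorganized freely; factoring out $\sum_{[\lambda]\in\u\Lambda}1/|\Aut(\lambda)|=|\Lambda|$ leaves exactly $|\Lambda|\,(\utilde S\,\psi)([y])$ by the formula of Theorem \ref{matrix} applied to $S$. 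Since both sides are linear in $\psi$ and every function on $\u X$ is a difference of nonnegative ones, $\utilde{\Lambda\times S}=|\Lambda|\,\utilde S$ follows for all $\psi$.

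There is no genuine obstacle here, only bookkeeping; the only points requiring a word of care are the degenerate case $|\Lambda|=0$ in verifying tameness condition 1 (handled above) and the justification for rearranging the iterated sum (legitimate since it is a sum of nonnegative terms on nonnegative $\psi$, with the general case following by linearity). If one prefers to bypass Theorem \ref{matrix}, an alternative route is to establish the equivalence of groupoids over $Y$, namely $(\Lambda\times S)\Psi\simeq\Lambda\times(S\Psi)$ for any tame $\Psi$ over $X$, by reordering entries in the defining tuples exactly as in the proof of Lemma \ref{linearity}(2); tameness of $\Lambda\times S$ then follows from tameness of $\Lambda$ and $S$, and $\utilde{(\Lambda\times S)\Psi}=\utilde{\Lambda\times(S\Psi)}=|\Lambda|\,\utilde{S\Psi}=|\Lambda|\,\utilde S\,\utilde\Psi$ by Proposition \ref{scalarmult1} and Theorem \ref{PROCESS1}, so the uniqueness clause of Theorem \ref{PROCESS1} gives the result.
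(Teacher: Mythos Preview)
Your proof is correct and is essentially a fleshed-out version of the paper's one-line argument, which simply reads ``This follows from Lemma \ref{essinverseproduct}.'' You have made explicit exactly how Lemma \ref{essinverseproduct} feeds into the tameness criterion and the explicit operator formula of Theorem \ref{matrix}; the alternative route you sketch via $(\Lambda\times S)\Psi\simeq\Lambda\times(S\Psi)$ and Proposition \ref{scalarmult1} is equally valid and arguably closer in spirit to how Lemma \ref{essinverseproduct} is stated.
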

\begin{proof} This follows from Lemma \ref{essinverseproduct}.
\end{proof}

\subsection{Functoriality of Degroupoidification}

In this section we prove that our process of turning groupoids into
vector spaces and spans of groupoids into linear operators is indeed a
functor. We first show that the process preserves identities, then
show associativity of composition, from which many other things
follow, including the preservation of composition.  The lemmas in this
section add up to a proof of the following theorem:

\begin{thm}\label{functor}
Degroupoidification is a functor from the category of groupoids
and equivalence classes of tame spans to the category of real
vector spaces and linear operators.
\end{thm}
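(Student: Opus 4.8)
The object part is already in place: a groupoid $X$ is sent to the vector space $\R^{\underline{X}}$ of Definition \ref{vectorspace}, and a tame span $S$ from $X$ to $Y$ to the linear operator $\utilde{S}\maps\R^{\underline{X}}\to\R^{\underline{Y}}$ of Theorem \ref{PROCESS1}, which by Theorem \ref{linops_equivspans1} depends only on the equivalence class of $S$. So the plan is to verify the two functor axioms, recording along the way that $\Span$ really is a category. Throughout I would exploit the characterization of $\utilde{S}$ as the unique linear operator with $\utilde{S}\,\utilde{\Psi}=\utilde{S\Psi}$ for tame $\Psi$ over $X$: since every nonnegative function on $\underline{X}$ arises as some $\utilde{\Psi}$ (proof of Theorem \ref{PROCESS1}) and such functions span $\R^{\underline{X}}$, any identity between linear operators $\R^{\underline{X}}\to\R^{\underline{Y}}$ that holds on all vectors $\utilde{\Psi}$ holds identically. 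This ``check on $\utilde{\Psi}$'s'' move is the engine of the whole proof.

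\emph{Preservation of identities.} First I would note that the identity span $1_X$ --- apex $X$, both legs $\id_X$ --- is tame: by the criterion of Theorem \ref{matrix}, $p^{-1}(x)\cap q^{-1}(y)$ is empty unless $x\cong y$, in which case it is equivalent to $1/\!/\Aut(x)$, which is tame by the finite-homsets hypothesis. Next, for any groupoid $v\maps\Psi\to X$, the weak pullback $1_X\Psi$ --- whose objects are triples $(x,a,\alpha\maps x\to v(a))$ --- is equivalent as a groupoid over $X$ to $\Psi$ itself, via $a\mapsto(v(a),a,\id)$. Hence $\utilde{1_X\Psi}=\utilde{\Psi}$ by Lemma \ref{vectorswelldefined}, so $\utilde{1_X}\,\utilde{\Psi}=\utilde{\Psi}$ for all tame $\Psi$, and therefore $\utilde{1_X}=\id_{\R^{\underline{X}}}$.

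\emph{Associativity and preservation of composition.} The crux is an associativity lemma for the weak pullback: for composable spans $S\maps X\to Y$ and $T\maps Y\to Z$ and a groupoid $\Psi$ over $X$, the groupoids $(TS)\Psi$ and $T(S\Psi)$ are equivalent as groupoids over $Z$; more generally $(UT)S\simeq U(TS)$ for composable spans, which --- together with the fact that equivalent spans have equivalent composites, checked exactly as in Lemma \ref{linearity} --- is what makes $\Span$ a category. I would prove the lemma by writing down the evident comparison functor on objects: both sides consist of an object of $T$, an object of $S$, an object of $\Psi$, and two compatible isomorphisms, one in $Y$ and one in $X$, merely bracketed differently; one then checks it is full, faithful, essentially surjective, and commutes with the legs to $Z$ up to the natural isomorphisms supplied by the universal property of the weak pullback. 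Granting this, tameness of $TS$ follows: if $\Psi$ is tame over $X$ then $S\Psi$ is tame over $Y$ (definition of a tame span), hence $T(S\Psi)$ is tame over $Z$, hence so is the equivalent groupoid $(TS)\Psi$ by Lemma \ref{vectorswelldefined}; as this holds for all tame $\Psi$, the span $TS$ is tame. And then, for any tame $\Psi$ over $X$,
\[ \utilde{TS}\,\utilde{\Psi}=\utilde{(TS)\Psi}=\utilde{T(S\Psi)}=\utilde{T}\,\utilde{S\Psi}=\utilde{T}\,\utilde{S}\,\utilde{\Psi}, \]
so $\utilde{TS}=\utilde{T}\,\utilde{S}$ by the spanning argument. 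Combined with $\utilde{1_X}=\id$ and the well-definedness from Theorem \ref{linops_equivspans1}, this shows degroupoidification is a functor.

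\emph{Main obstacle.} Everything here is a short deduction from Theorems \ref{PROCESS1}, \ref{linops_equivspans1} and \ref{matrix} except the associativity-of-weak-pullback lemma. That lemma is not deep, but keeping straight the two nested isomorphisms, verifying that the comparison functor is an equivalence, and checking that the triangles over $Z$ commute with the correct natural isomorphisms, is the one genuinely fiddly computation, and it is the heart of the proof.
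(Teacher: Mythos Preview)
Your proposal is correct and follows essentially the same route as the paper: preservation of composition is derived from the associativity-of-weak-pullback lemma (the paper's Proposition \ref{associativity}) by viewing $\Psi$ as a span to the terminal groupoid and running exactly your chain $\utilde{TS}\,\utilde{\Psi}=\utilde{(TS)\Psi}=\utilde{T(S\Psi)}=\utilde{T}\,\utilde{S}\,\utilde{\Psi}$ (Lemma \ref{composition}). The only difference is in the identity step: you verify $1_X\Psi\simeq\Psi$ directly and invoke Lemma \ref{vectorswelldefined}, whereas the paper reads off $\utilde{1_X}=\id$ from the explicit matrix formula of Theorem \ref{matrix}; both are short, and your argument has the minor bonus of also making the tameness of $1_X$ and of $TS$ explicit.
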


\begin{proof}
As mentioned above, the proof follows from Lemmas \ref{identities} and
\ref{composition}.
\end{proof}

\begin{lemma}\label{identities}
Degroupoidification preserves identities, i.e., given a groupoid $X$,
$\utilde{1_X} = 1_{\R^{\utilde{X}}}$, where $1_X$ is the identity span
from $X$ to $X$ and $1_{\R^{\utilde{X}}}$ is the identity operator on
$\R^{\utilde{X}}$.
\end{lemma}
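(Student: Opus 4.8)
The plan is to identify what the identity span does to an arbitrary tame groupoid over $X$, and then invoke the uniqueness clause of Theorem \ref{PROCESS1}. Let $v \maps \Phi \to X$ be a groupoid over $X$ and form the weak pullback $1_X\Phi$ that enters the definition of $\utilde{1_X}\,\utilde{\Phi}$: its objects are triples $(s,a,\alpha)$ with $s \in X$, $a \in \Phi$, and $\alpha$ an isomorphism $1_X(s) \to v(a)$ in $X$ --- that is, $\alpha \maps s \to v(a)$. First I would check that the projection $\pi_\Phi \maps 1_X\Phi \to \Phi$, $(s,a,\alpha) \mapsto a$, is an equivalence of groupoids over $X$. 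It is essentially surjective and fully faithful directly from the description of morphisms in a weak pullback, since once the $\Phi$-component of a morphism is chosen the $X$-components are forced. Moreover the structure map of $1_X\Phi$ over $X$ sends $(s,a,\alpha)$ to $s$, while $v\pi_\Phi$ sends it to $v(a)$, and the family of isomorphisms $\alpha \maps s \to v(a)$ is exactly a natural isomorphism between these two functors --- its naturality square is precisely the commuting square defining morphisms in $1_X\Phi$. So $1_X\Phi \simeq \Phi$ as groupoids over $X$.

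Granting this, Lemma \ref{vectorswelldefined} does two jobs at once. Applied to each essential inverse image (via Lemma \ref{ESSENTIALPULLBACK}), it shows $1_X\Phi$ is tame whenever $\Phi$ is, so the span $1_X$ is tame and $\utilde{1_X}$ is defined. And it gives $\utilde{1_X\Phi} = \utilde{\Phi}$. Combining with the defining property $\utilde{1_X}\,\utilde{\Phi} = \utilde{1_X\Phi}$ of Theorem \ref{PROCESS1}, we conclude $\utilde{1_X}\,\utilde{\Phi} = \utilde{\Phi}$ for every tame groupoid $\Phi$ over $X$.

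It remains to upgrade this to $\utilde{1_X} = 1_{\R^{\underline{X}}}$ on all of $\R^{\underline{X}}$. As already used in the proof of Theorem \ref{PROCESS1}, every nonnegative function $\psi \maps \underline{X} \to \R$ equals $\utilde{\Psi}$ for some tame groupoid $\Psi$ over $X$, and every element of $\R^{\underline{X}}$ is a difference of two nonnegative ones; since $\utilde{1_X}$ is linear, the identity $\utilde{1_X}\psi = \psi$ for nonnegative $\psi$ forces it in general.

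I expect the only genuine work to be in the first paragraph: writing the equivalence $1_X\Phi \simeq \Phi$ over $X$ carefully enough that the witnessing natural isomorphism is explicit; everything afterward is bookkeeping. As a shortcut one could instead feed $1_X$ directly into Theorem \ref{matrix}: with $p = q = 1_X$ the groupoid $p^{-1}(x)\cap q^{-1}(y)$ is empty unless $[x]=[y]$, in which case it consists of a single isomorphism class $[s]$ with $|\Aut(s)| = |\Aut(x)|$, so the matrix entry is $\delta_{[x][y]}$ --- quicker, but less conceptual.
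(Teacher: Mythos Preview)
Your proposal is correct. The paper's own proof is literally the one-line ``shortcut'' you describe in your final paragraph: it simply says that the lemma follows from the explicit formula of Theorem~\ref{matrix}. So your main argument is genuinely different from the paper's route.

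The comparison is roughly what you already diagnosed. The paper's approach is a direct computation: with $p = q = 1_X$ the groupoid $p^{-1}(x)\cap q^{-1}(y)$ is empty unless $[x]=[y]$, and in that case contributes a single summand with $|\Aut(s)| = |\Aut(x)|$, so the matrix is the identity. Your approach instead exhibits the equivalence $1_X\Phi \simeq \Phi$ of groupoids over $X$ and then invokes Lemma~\ref{vectorswelldefined} and the uniqueness clause of Theorem~\ref{PROCESS1}. Your route is more conceptual and makes no use of Theorem~\ref{matrix}; it also verifies tameness of $1_X$ along the way rather than relying on the tameness criterion in Theorem~\ref{matrix}. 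The paper's route is shorter but depends on the heavier machinery of the explicit formula. Either way the amount of genuine work is small; your observation that the only real content is writing down the equivalence $1_X\Phi \simeq \Phi$ carefully is accurate.
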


\begin{proof}
This follows from the explicit formula given in Theorem \ref{matrix}.
\end{proof}

We now want to prove the associativity of composition of tame spans.
Amongst the consequences of this proposition we can derive the
preservation of composition under degroupoidification.  Given a triple
of composable spans:
\[
\xymatrix{
 & T \ar[dl]_{q_T}\ar[dr]^{p_T} && S \ar[dl]_{q_S}\ar[dr]^{p_S} && R \ar[dl]_{q_R}\ar[dr]^{p_R} &\\
Z && Y && X && W
}
\]
we want to show that composing in the two possible orders --- $T(SR)$
or $(TS)R$ --- will provide equivalent spans of groupoids.  In fact,
since groupoids, spans of groupoids, and isomorphism classes of maps
between spans of groupoids naturally form a bicategory, there exists a
natural isomorphism called the {\bf associator}.  This tells us that
the spans $T(SR)$ and $(TS)R$ are in fact equivalent.  But since we
have not constructed this bicategory, we will instead give an explicit
construction of the equivalence $T(SR) \stackrel{\sim}\rightarrow
(TS)R$.  

\begin{proposition}\label{associativity}
Given a composable triple of tame spans, the operation of composition
of tame spans by weak pullback is associative up to equivalence of
spans of groupoids.
\end{proposition}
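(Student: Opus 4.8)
The plan is to exhibit an explicit equivalence of spans $T(SR) \xrightarrow{\sim} (TS)R$ by comparing objects and morphisms of the two iterated weak pullbacks directly. First I would unwind both composite groupoids. An object of $SR$ is a triple $(s, r, \alpha)$ with $\alpha \maps p_S(s) \to q_R(r)$ an isomorphism in $X$; an object of $T(SR)$ is then a triple $\bigl(t, (s,r,\alpha), \beta\bigr)$ with $\beta \maps p_T(t) \to q_S(s)$ in $Y$. So after flattening, an object of $T(SR)$ is a quintuple $(t,s,r,\beta,\alpha)$ where $t \in T$, $s \in S$, $r \in R$, $\beta \maps p_T(t) \to q_S(s)$ in $Y$, and $\alpha \maps p_S(s) \to q_R(r)$ in $X$. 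Doing the same on the other side: an object of $TS$ is a triple $(t,s,\beta)$ with $\beta \maps p_T(t) \to q_S(s)$, and an object of $(TS)R$ is a triple $\bigl((t,s,\beta), r, \alpha'\bigr)$ with $\alpha' \maps p_S(\pi_S(t,s,\beta)) \to q_R(r)$, i.e.\ $\alpha' \maps p_S(s) \to q_R(r)$ in $X$. So flattened objects of $(TS)R$ are \emph{also} quintuples $(t,s,r,\beta,\alpha')$ with exactly the same data. The functor $F$ is then the obvious one sending $(t,s,r,\beta,\alpha)$ to itself, and one checks the analogous statement for morphisms: a morphism in either flattened composite is a triple $(f,g,h)$ with $f \maps t \to t'$ in $T$, $g \maps s \to s'$ in $S$, $h \maps r \to r'$ in $R$, making the two naturality squares (one over $Y$ involving $\beta$, one over $X$ involving $\alpha$) commute — and these conditions are literally the same on both sides.

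The key steps, in order: (1) write out the objects and morphisms of $T(SR)$ explicitly, flattening nested triples into quintuples; (2) do the same for $(TS)R$; (3) observe that the two descriptions coincide on the nose, define $F$ to be the identity-on-underlying-data assignment, and verify functoriality (composition and identities are preserved because composition in a weak pullback is componentwise); (4) exhibit the inverse functor (again identity-on-data) and the natural isomorphisms $F G \To \id$, $G F \To \id$ — here one may actually get equalities if one is careful about how the nesting is associated, or at worst canonical isomorphisms built from identity morphisms; (5) check that $F$ is compatible with the legs down to $Z$ and $W$, i.e.\ that $q_T \pi_T \pi_{SR} \circ$ -- the composite left leg of $T(SR)$ -- agrees (up to the natural isomorphism) with the composite left leg of $(TS)R$, and similarly on the right. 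This compatibility with the legs is what upgrades the equivalence of groupoids to an equivalence of \emph{spans}.

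The main obstacle is purely bookkeeping: keeping straight which isomorphism lives in $X$ versus $Y$, and tracking how the projection functors $\pi_S$, $\pi_T$, $\pi_{SR}$, $\pi_{TS}$ interact when one forms the outer weak pullback. In particular, the object $\alpha'$ appearing in $(TS)R$ is an isomorphism $p_S(\pi_S(t,s,\beta)) \to q_R(r)$, and one must recognize $p_S \circ \pi_S$ applied to the $TS$-object $(t,s,\beta)$ as simply $p_S(s)$ — this identification is what makes the two sides match, but it relies on $\pi_S$ being projection to the $S$-component. There is a mild subtlety about whether we get equality or merely coherent isomorphism of the two groupoids, since the canonical associativity of iterated pullbacks is an equivalence rather than an isomorphism when one uses the "object = nested triple" definition literally; the honest statement, consistent with the proposition as stated, is "associative up to equivalence of spans," so it suffices to produce $F$, its weak inverse, and the two natural isomorphisms, all of whose component morphisms are built from identities. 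Finally one invokes the already-proved fact (Lemma \ref{skeletal} and Lemma \ref{linearity}, plus Theorem \ref{linops_equivspans1}) only implicitly — strictly the proposition is a statement about spans, not about the induced operators, so no degroupoidification is needed here; the operator-level consequence $\utilde{TS} = \utilde{T}\,\utilde{S}$ and its associativity follow afterward from Theorem \ref{linops_equivspans1} combined with Lemma \ref{composition}.
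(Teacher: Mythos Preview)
Your proposal is correct and follows essentially the same approach as the paper: unwind the objects and morphisms of both iterated weak pullbacks, recognize that each reduces to the same quintuple of data $(t,s,r)$ together with two connecting isomorphisms, and define $F$ as the obvious rebracketing. The paper writes this as $F\maps (t,(s,r,\alpha),\beta) \mapsto ((t,s,\beta),r,\alpha)$ rather than flattening to quintuples, and it orients the isomorphisms $\alpha,\beta$ in the opposite direction from yours (e.g.\ $\alpha\maps q_R(r)\to p_S(s)$ rather than $p_S(s)\to q_R(r)$), but since everything lives in groupoids this is immaterial; your step~(5) checking compatibility with the legs corresponds to the paper's one-line remark that the required natural isomorphisms for an equivalence of spans can be taken to be identities.
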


\begin{proof}
We consider the above triple of spans in order to construct the
aforementioned equivalence.  The equivalence is simple to describe if
we first take a close look at the groupoids $T(SR)$ and $(TS)R$.  The
composite $T(SR)$ has objects $(t,(s,r,\alpha),\beta)$ such that $r\in
R$, $s\in S$, $t\in T$, $\alpha\maps q_R(r)\to p_S(s)$, and
$\beta\maps q_S(s)\to p_T(t)$, and morphisms $f \maps
(t,(s,r,\alpha),\beta) \to (t',(s',r',\alpha'),\beta')$, which consist
of a map $g \maps (s,r,\alpha) \to (s',r',\alpha')$ in $SR$ and a map
$h \maps t \to t'$ such that the following diagram commutes:
\[
\xymatrix{
q_S\pi_s((s,r,\alpha)) \ar[r]^{\beta} \ar[d]_{q_S\pi_S(g)} & p_T(t) \ar[d]^{p_T(h)} \\
q_S\pi_s((s',r',\alpha')) \ar[r]_{\beta'} & p_T(t')
}
\]
where $\pi_S$ maps the composite $SR$ to $S$.  Further, $g$ consists of a 
pair of maps $k\maps r \to r'$ and $j \maps s\to s'$ such that the following 
diagram commutes: 
\[
\xymatrix{
q_R(r) \ar[r]^{\alpha} \ar[d]_{q_S(k)} & p_S(s) \ar[d]^{p_S(j)} \\
q_R(r') \ar[r]_{\alpha'} & p_S(s')
}
\]

The groupoid $(TS)R$ has objects $((t,s,\alpha),r,\beta)$ such that
$r\in R$, $s\in S$, $t\in T$, $\alpha\maps q_S(s)\to p_T(t)$,
and $\beta\maps q_R(r)\to p_S(s)$, and morphisms $f \maps
((t,s,\alpha),r,\beta) \to ((t',s',\alpha'),r',\beta')$, which consist
of a map $g \maps (t,s,\alpha) \to (t',s',\alpha')$ in $TS$ and a map
$h \maps r \to r'$ such that the following diagram commutes:
\[
\xymatrix{
p_R(r) \ar[d]_{p_R(h)}\ar[r]^{\beta} & p_S\pi_s((t,s,\alpha))  \ar[d]^{p_S\pi_S(g)}   \\
p_R(r')  \ar[r]_{\beta'} & p_S\pi_s((t',s',\alpha')) 
}
\]
Further, $g$ consists of a pair of maps $k\maps s \to s'$ and $j \maps
t\to t'$ such that the following diagram commutes:
\[
\xymatrix{
q_S(s) \ar[r]^{\alpha} \ar[d]_{q_S(k)} & p_T(t) \ar[d]^{p_T(j)} \\
q_S(s') \ar[r]_{\alpha'} & p_T(t')
}
\]

We can now write down a functor $F\maps T(SR) \to (TS)R$:
\[ (t,(s,r,\alpha),\beta) \mapsto ((t,s,\beta),r,\alpha) \]
Again, a morphism $f \maps (t,(s,r,\alpha),\beta) \to
(t',(s',r',\alpha'),\beta')$ consists of maps $k\maps r \to r'$, $j
\maps s\to s'$, and $h \maps t \to t'$.  We need to define $F(f) \maps
((t,s,\beta),r,\alpha) \to ((t',s',\beta'),r',\alpha')$.  The first
component $g' \maps (t,s,\beta) \to (t',s',\beta')$ consists of the
maps $j\maps s\to s'$ and $h\maps t \to t'$, and the following diagram
commutes:
\[
\xymatrix{
q_S(s) \ar[r]^{\beta}\ar[d]_{q_S(j)} & p_T(t) \ar[d]^{p_T(h)} \\
q_S(s') \ar[r]_{\beta'} & p_T(t')
}
\]
The other component map of $F(f)$ is $k \maps r \to r'$ and we see
that the following diagram also commutes:
\[
\xymatrix{
p_R(r) \ar[d]_{p_R(k)}\ar[rr]^{\alpha} && p_S\pi_s((t,s,\beta))  \ar[d]^{p_S\pi_S(g')}   \\
p_R(r')  \ar[rr]_{\alpha'} && p_S\pi_s((t',s',\beta')) 
}
\]
thus, defining a morphism in $(TS)R$.

We now just need to check that $F$ preserves identities and
composition and that it is indeed an isomorphism.  We will then have
shown that the apexes of the two spans are isomorphic.  First, given
an identity morphism $1\maps (t,(s,r,\alpha),\beta) \to
(t,(s,r,\alpha),\beta)$, then $F(1)$ is the identity morphism on
$((t,s,\beta),r,\alpha)$.  The components of the identity morphism are
the respective identity morphisms on the objects $r$,$s$, and $t$.  By
the construction of $F$, it is clear that $F(1)$ will then be an
identity morphism.

Given a pair of composable maps $f \maps (t,(s,r,\alpha),\beta) \to
(t',(s',r',\alpha'),\beta')$ and $f' \maps (t',(s',r',\alpha'),\beta')
\to (t'',(s'',r'',\alpha''),\beta'')$ in $T(SR)$, the composite is a
map $f'f$ with components $g'g \maps (s,r,\alpha) \to
(s'',r'',\alpha'')$ and $h'h\maps t \to t''$.  Further, $g'g$ has
component morphisms $k'k \maps r\to r''$ and $j'j \maps s\to s'$.  It
is then easy to check that under the image of $F$ this composition is
preserved.

The construction of the inverse of $F$ is implicit in the construction
of $F$, and it is easy to verify that each composite $FF^{-1}$ and
$F^{-1}F$ is an identity functor.  Further, the natural isomorphisms
required for an equivalence of spans can each be taken to be the
identity.
\end{proof}

It follows from the associativity of composition that degroupoidification
preserves composition:

\begin{lemma}
\label{composition}
Degroupoidification preserves composition.  That is, given a 
pair of composable tame spans:
\[
\xymatrix{
  & T\ar[dr]\ar[dl]  &   & S\ar[dr]\ar[dl] & \\
Z &                  & Y &                 & X
}
\]   
we have
\[ \utilde{T}\utilde{S} = \utilde{TS}. \]
\end{lemma}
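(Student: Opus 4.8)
The plan is to use the uniqueness clause of Theorem \ref{PROCESS1} together with the associativity of composition established in Proposition \ref{associativity}. First I would observe that it suffices to check the equality $\utilde{TS} = \utilde{T}\,\utilde{S}$ on vectors of the form $\utilde{\Psi}$ coming from tame groupoids $\Psi$ over $X$: exactly as in the proof of Theorem \ref{PROCESS1}, every nonnegative function on $\u{X}$ is of this form, every function is a difference of two nonnegative ones, and both $\utilde{TS}$ and $\utilde{T}\,\utilde{S}$ are linear, so agreement on these special vectors forces agreement on all of $\R^{\u{X}}$. (This reduction presupposes that $TS$ is tame, so that $\utilde{TS}$ is even defined; I handle that below.)

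Next I would make the standard identification of a groupoid over $X$, say $v \maps \Psi \to X$, with a span from the terminal groupoid $1$ to $X$ whose right leg is $v$ and whose left leg is the unique functor to $1$. Under this identification the weak pullback $S\Psi$, regarded as a groupoid over $Y$, is precisely the composite of the span $S$ with the span $\Psi$. I would check that a tame groupoid over $X$ is also tame when viewed as a span $1 \to X$: weak-pulling back along a tame groupoid over $1$ merely multiplies the essential inverse images by a finite cardinality, by Lemma \ref{essinverseproduct} (or directly, since the weak pullback over $1$ is just a product). Granting this, Proposition \ref{associativity} applied to the composable triple of tame spans $T$, $S$, $\Psi$ yields an equivalence of spans $(TS)\Psi \simeq T(S\Psi)$, hence an equivalence of groupoids over $Z$, and therefore $\utilde{(TS)\Psi} = \utilde{T(S\Psi)}$ by Lemma \ref{vectorswelldefined}.

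Now I would chain the defining properties of the operators. Since $S$ is tame, $S\Psi$ is tame over $Y$; since $T$ is tame, $T(S\Psi)$ is tame over $Z$; and since $(TS)\Psi \simeq T(S\Psi)$, the groupoid $(TS)\Psi$ is tame over $Z$ as well. As $\Psi$ ranges over tame groupoids over $X$, this shows the span $TS$ is tame, so $\utilde{TS}$ exists. Then, applying the defining property from Theorem \ref{PROCESS1} three times,
\[
\utilde{T}\,\utilde{S}\,\utilde{\Psi} \;=\; \utilde{T}\,\utilde{S\Psi} \;=\; \utilde{T(S\Psi)} \;=\; \utilde{(TS)\Psi} \;=\; \utilde{TS}\,\utilde{\Psi},
\]
the third equality being the equivalence from the previous paragraph. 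By the reduction in the first paragraph, $\utilde{T}\,\utilde{S} = \utilde{TS}$.

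I do not expect a serious obstacle: the substantive work was already done in Theorem \ref{PROCESS1} (well-definedness and the universal characterization of the operator attached to a tame span) and in Proposition \ref{associativity} (associativity of composition by weak pullback). The one point needing care is the bookkeeping of the second paragraph --- verifying that a tame groupoid over $X$ is tame when viewed as a span $1 \to X$, and that the span composite genuinely reproduces the weak pullback $S\Psi$ on the nose --- after which the argument is purely formal. As a cross-check one could instead proceed computationally, deriving $\utilde{TS} = \utilde{T}\,\utilde{S}$ directly from the explicit matrix formula of Theorem \ref{matrix} and the description $(TS)_{ki} = \coprod_{j} T_{kj} \times S_{ji}$ of composition of spans; I would keep that in reserve but present the conceptual argument as the main proof.
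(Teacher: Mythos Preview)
Your proposal is correct and follows essentially the same route as the paper: view a tame groupoid $\Psi$ over $X$ as a span with one leg to the terminal groupoid, invoke Proposition~\ref{associativity} to get $(TS)\Psi \simeq T(S\Psi)$, and unwind the defining property of $\utilde{\;\;}$ to obtain $\utilde{TS}\,\utilde{\Psi} = \utilde{T}\,\utilde{S}\,\utilde{\Psi}$. You are in fact more careful than the paper in two respects: you explicitly check that $\Psi$ is tame as a span (needed to invoke associativity as stated) and that $TS$ is tame (needed for $\utilde{TS}$ to exist), and you spell out the reduction to vectors of the form $\utilde{\Psi}$; the paper's proof leaves these points implicit.
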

 
\begin{proof}
Consider the composable pair of spans above along with a groupoid $\Psi$
over $X$:
\[
\xymatrix{
  & T\ar[dr]\ar[dl]  &   & S\ar[dr]\ar[dl] &  & \Psi\ar[dr]\ar[dl] &\\
Z &                  & Y &                 & X &     & 1
}
\] 
We can consider the groupoid over $X$ as a span by taking the right leg
to be the unique map to the terminal groupoid.  We can compose this triple
of spans  in two ways; either $T(S\Psi)$ or $(TS)\Psi$.  By the
Proposition \ref{associativity} stated above, these spans are equivalent.
By Theorem \ref{linops_equivspans1}, degroupoidification produces the same linear operators.
Thus, composition is preserved.  That is,
\[ \utilde{T}\utilde{S}\utilde{\Psi} = \utilde{TS}\utilde{\Psi}. \]
\end{proof}

\subsection{Inner Products and Adjoints}
\label{innerprodandadjoint}

Now we prove our results about the inner product of groupoids over
a fixed groupoid, and the adjoint of a span:

\begin{thm}\label{innerprod_theorem}
Given a groupoid $X$, there is a unique inner product $\ip{\cdot}{\cdot}$ 
on the vector space $L^2(X)$ such that
\[ \ip{\utilde{\Phi}}{\utilde{\Psi}} = |\ip{\Phi}{\Psi}| \]
whenever $\Phi$ and $\Psi$ are square-integrable groupoids over $X$.
With this inner product $L^2(X)$ is a real Hilbert space.
\end{thm}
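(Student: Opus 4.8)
The plan is to define the inner product by the formula forced upon us, verify it is well-defined, and then check the Hilbert space axioms. First I would observe that by Definition \ref{L2} the vectors $\utilde{\Psi}$ arising from square-integrable groupoids $\Psi$ over $X$ span $L^2(X)$, so if an inner product with the stated property exists it is unique: bilinearity extends it from these spanning vectors to all of $L^2(X)$. The real work, exactly as in the proof of Theorem \ref{PROCESS1}, is to show the formula $\ip{\utilde{\Phi}}{\utilde{\Psi}} = |\ip{\Phi}{\Psi}|$ actually \emph{defines} a bilinear form, i.e.\ does not depend on the chosen presentations of the two vectors.

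For well-definedness I would reduce to an explicit computation of $|\ip{\Phi}{\Psi}|$ in terms of $\utilde{\Phi}$ and $\utilde{\Psi}$. Passing to skeletal groupoids (Lemma \ref{vectorswelldefined} together with Lemma \ref{skeletal}) and decomposing into one-object pieces as in Theorem \ref{PROCESS1}, an object of the weak pullback $\ip{\Phi}{\Psi}$ over a fixed $[x]\in\u{X}$ is a pair $(a,b)$ with $v(a)=x=w(b)$ together with an automorphism of $x$, and a morphism out of such an object is an arbitrary pair of automorphisms of $a$ and of $b$; applying Lemma \ref{ALTCARD} exactly as in the derivation of Eq.\ \ref{formula2} gives
\[ |\ip{\Phi}{\Psi}| = \sum_{[x]\in\u{X}} |\Aut(x)| \,\, \utilde{\Phi}([x]) \, \utilde{\Psi}([x]) . \]
This formula is manifestly bilinear in $\utilde{\Phi}$ and $\utilde{\Psi}$, symmetric, and depends only on the vectors, not on the groupoids; so the inner product is well-defined on all of $L^2(X)$, and it agrees with the weighted $\ell^2$ inner product with weights $|\Aut(x)|$. (Convergence of the sum is exactly the square-integrability hypothesis: $\ip{\Psi}{\Psi}$ tame means $\sum_{[x]}|\Aut(x)|\,\utilde{\Psi}([x])^2 < \infty$.)

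It remains to check this is a genuine inner product and that $L^2(X)$ is complete. Symmetry and bilinearity are immediate from the displayed formula. Positive-definiteness also follows: $\ip{\utilde{\Psi}}{\utilde{\Psi}} = \sum_{[x]}|\Aut(x)|\,\utilde{\Psi}([x])^2 \ge 0$, with equality only if $\utilde{\Psi}=0$, since each weight $|\Aut(x)|$ is a positive integer. For completeness I would identify $L^2(X)$ with a subspace of the weighted Hilbert space $\ell^2(\u{X}, \mu)$ where $\mu([x]) = |\Aut(x)|$: the map $\psi \mapsto \psi$ is an isometry onto its image, so it suffices to show $L^2(X)$ is a \emph{closed} subspace of this weighted $\ell^2$. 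The main obstacle is precisely this density/closedness point --- one must argue that the finite linear combinations of square-integrable-groupoid vectors, which form $L^2(X)$ by definition, already constitute a closed subspace. The cleanest route is to show they are in fact \emph{dense} in all of $\ell^2(\u{X},\mu)$: for each isomorphism class $[x]$ the characteristic function $\delta_{[x]}$ arises from the groupoid $\Aut(x)/\!/\Aut(x)$ (a one-object groupoid mapping to $X$ by the inclusion of $x$), which has $\ip{\Psi}{\Psi}$ finite, so every $\delta_{[x]}$ lies in $L^2(X)$; finite combinations of these are dense in $\ell^2(\u{X},\mu)$, hence $L^2(X)$ is dense, and a dense subspace carrying the restricted inner product of a Hilbert space need not be complete --- so the honest statement is that the \emph{completion} of $L^2(X)$ is $\ell^2(\u{X},\mu)$. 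I would therefore either (i) interpret $L^2(X)$ in Definition \ref{L2} as this completion, or (ii) note that when $\u{X}$ is finite (the case actually used in the applications) there is no issue, and flag the general case as requiring the completion. Given the phrasing of Definition \ref{L2}, option (i) is the intended reading, and with it the theorem follows.
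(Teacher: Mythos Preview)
Your approach to uniqueness and well-definedness is essentially the paper's: the paper also says the existence argument ``closely resembles the proof of existence in Theorem \ref{PROCESS1}'' and leaves it at that, so your explicit derivation of the formula
\[
|\ip{\Phi}{\Psi}| = \sum_{[x]\in\u{X}} |\Aut(x)|\,\utilde{\Phi}([x])\,\utilde{\Psi}([x])
\]
is exactly the computation the paper has in mind (and indeed matches the Fock-space inner product in Eq.\ \ref{fock_inner_product}). That part is fine.

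Your completeness discussion, however, takes a wrong turn. You correctly observe that $L^2(X)$ contains every $\delta_{[x]}$ and hence is dense in $\ell^2(\u{X},\mu)$, and then worry that density does not give closedness, leading you to reinterpret Definition \ref{L2} as a completion. But no reinterpretation is needed: $L^2(X)$ is \emph{equal} to $\ell^2(\u{X},\mu)$. The point you are missing is that a \emph{single} square-integrable groupoid over $X$ can realize any nonnegative function in $\ell^2(\u{X},\mu)$, not just a finitely-supported one. Indeed (as the paper already uses in the proof of Theorem \ref{PROCESS1}), for any nonnegative $\psi\colon\u{X}\to\R$ one can build a groupoid $\Psi$ over $X$ with $\utilde{\Psi}=\psi$; by your own formula, $\Psi$ is square-integrable precisely when $\sum_{[x]}|\Aut(x)|\,\psi([x])^2<\infty$, i.e.\ when $\psi\in\ell^2(\u{X},\mu)$. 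Since any $\psi\in\ell^2(\u{X},\mu)$ splits as $\psi^+-\psi^-$ with $\psi^\pm\ge 0$ and $\psi^\pm\in\ell^2(\u{X},\mu)$, every element of $\ell^2(\u{X},\mu)$ is already a difference of two vectors of the form $\utilde{\Psi}$. Hence $L^2(X)=\ell^2(\u{X},\mu)$ on the nose, and completeness is immediate. (The paper itself leaves completeness ``to the reader''; this is the intended argument.)
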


\begin{proof}
Uniqueness of the inner product follows from the formula, since every vector 
in $L^2(X)$ is a finite-linear combination of vectors
$\utilde{\Psi}$ for square-integrable groupoids $\Psi$ over $X$.
To show the inner product exists, suppose that $\Psi_i, \Phi_i$ are
square-integrable groupoids over $X$ and $\alpha_i, \beta_i \in \R$
for $1 \le i \le n$.  Then we need to check that 
\[ \sum_{i}\alpha_i\utilde{\Psi}_i 
= \sum_{j}\beta_j \utilde{\Phi}_j = 0 \]
implies
\[ \sum_{i,j} \alpha_i\beta_j \, 
 |\langle \Psi_i, \Phi_j \rangle | = 0.\]
The proof here closely resembles the proof of existence in
Theorem \ref{PROCESS1}.  We leave to the reader the task of 
checking that $L^2(X)$ is complete in the norm corresponding 
to this inner product.
\end{proof}

\begin{proposition}\label{innerprod_adjoint}
Given a span
\[
\xymatrix{
& S \ar[dl]_{q} \ar[dr]^{p} & \\
Y & & X
}
\]
and a pair $v \maps \Psi \to X$, $w \maps \Phi \to Y$ of 
groupoids over $X$ and $Y$, respectively, there is an equivalence of groupoids
\[ \langle \Phi,S\Psi \rangle \simeq \langle S^\dagger\Phi,\Psi \rangle. \]
\end{proposition}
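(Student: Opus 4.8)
The plan is to identify both $\langle \Phi, S\Psi\rangle$ and $\langle S^{\dagger}\Phi, \Psi\rangle$ with one and the same groupoid, the ``iterated weak pullback'' of the diagram
\[ \Phi \xrightarrow{\,w\,} Y \xleftarrow{\,q\,} S \xrightarrow{\,p\,} X \xleftarrow{\,v\,} \Psi . \]
First I would write this groupoid out explicitly. Let $P$ have as objects the quintuples $(\phi, s, z, \alpha, \beta)$ with $\phi \in \Phi$, $s \in S$, $z \in \Psi$, $\alpha \maps p(s) \to v(z)$ an isomorphism in $X$, and $\beta \maps q(s) \to w(\phi)$ an isomorphism in $Y$; and as a morphism $(\phi, s, z, \alpha, \beta) \to (\phi', s', z', \alpha', \beta')$ a triple $(g \maps \phi \to \phi',\; f \maps s \to s',\; k \maps z \to z')$ such that the two squares
\[
\xymatrix{
p(s) \ar[r]^{\alpha} \ar[d]_{p(f)} & v(z) \ar[d]^{v(k)} \\
p(s') \ar[r]_{\alpha'} & v(z')
}
\qquad\qquad
\xymatrix{
q(s) \ar[r]^{\beta} \ar[d]_{q(f)} & w(\phi) \ar[d]^{w(g)} \\
q(s') \ar[r]_{\beta'} & w(\phi')
}
\]
commute. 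Composition and identities are componentwise, so $P$ is a groupoid.

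Next I would unravel the two sides. By definition of the weak pullback, an object of $S\Psi$ is a triple $(s,z,\alpha)$ with $\alpha \maps p(s)\to v(z)$, and $S\Psi$ is a groupoid over $Y$ via $q\pi_S \maps (s,z,\alpha)\mapsto q(s)$; so an object of $\langle\Phi, S\Psi\rangle$ carries exactly the data of an object of $P$, merely parenthesized differently, and a morphism of $\langle\Phi, S\Psi\rangle$ is a map $g \maps \phi\to\phi'$ together with a morphism $(f,k)$ of $S\Psi$ — which is precisely the left square above — compatible with the $\beta$'s — which is precisely the right square. Symmetrically, an object of $S^{\dagger}\Phi$ is a triple $(s,\phi,\beta)$ with $\beta\maps q(s)\to w(\phi)$, and $S^{\dagger}\Phi$ is a groupoid over $X$ via $p\pi_S \maps (s,\phi,\beta)\mapsto p(s)$, so an object of $\langle S^{\dagger}\Phi,\Psi\rangle$ again carries the data of an object of $P$ under a different bracketing, with morphisms again amounting to the same two squares. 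Hence the evident re-bracketing assignments give functors $\langle\Phi, S\Psi\rangle\to P$ and $\langle S^{\dagger}\Phi,\Psi\rangle\to P$ (functoriality is automatic, since composition is componentwise in all three groupoids), the re-bracketings in the other direction are strictly inverse to them, each is therefore an isomorphism of groupoids, and composing one with the inverse of the other yields the desired equivalence — in fact an isomorphism — $\langle\Phi, S\Psi\rangle\simeq\langle S^{\dagger}\Phi,\Psi\rangle$.

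The one thing demanding care, and the closest thing to an obstacle, is keeping the bookkeeping straight. The weak-pullback convention of Section \ref{degroupoidification} fixes a direction for each comparison isomorphism, so according to the order in which the arguments are listed in $\langle-,-\rangle$, in $S\Psi$, and in $S^{\dagger}\Phi$, one may actually meet $\alpha^{-1}$ or $\beta^{-1}$ where I wrote $\alpha$ or $\beta$; since every groupoid here is a groupoid, inverting these isomorphisms changes none of the commuting squares and the re-bracketing functors are well defined in any case. With that settled, the remaining verification — that the diagrams defining morphisms on each side really do coincide with the two squares defining morphisms of $P$ — is a routine diagram chase with nothing hidden in it, which is why the statement is only a proposition.
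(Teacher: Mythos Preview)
Your argument is correct, and at its core it is the same argument the paper gives: both identify $\langle\Phi,S\Psi\rangle$ and $\langle S^{\dagger}\Phi,\Psi\rangle$ with the iterated weak pullback of the zig-zag $\Phi\to Y\leftarrow S\to X\leftarrow\Psi$, and the equivalence is the obvious re-bracketing. The difference is only in packaging. The paper observes that if one regards $\Psi$ as a span from $1$ to $X$ and $\Phi$ as a span from $Y$ to $1$, then $\langle\Phi,S\Psi\rangle$ is the apex of $\Phi\circ(S\circ\Psi)$ while $\langle S^{\dagger}\Phi,\Psi\rangle$ is the apex of $(\Phi\circ S)\circ\Psi$, so the result is literally a special case of the associativity Lemma~\ref{associativity}. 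You instead unpack the definitions and build the common groupoid $P$ by hand. Since the proof of Lemma~\ref{associativity} is itself exactly this kind of explicit re-bracketing, the two arguments have identical content; the paper's version buys reuse of a lemma already on the shelf, yours buys self-containment and makes the convention issue about the direction of $\alpha,\beta$ visible (which you handle correctly).
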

\begin{proof}
We can consider the groupoids over $X$ and $Y$ as spans with one leg
over the terminal groupoid $1$.  Then the result follows from the
equivalence given by associtativity in Lemma \ref{associativity} and Theorem
\ref{linops_equivspans1}.
Explicitly, $ \langle \Phi,S\Psi \rangle$ is the composite of spans $S\Psi$
and $\Phi$, while $\langle S^\dagger\Phi,\Psi \rangle$ is the composite
of spans $S^\dagger\Phi$ and $\Psi$.
\end{proof}

\begin{proposition}\label{adjoint_comp}
Given spans
\[
\xymatrix{
  & T \ar[dl]_{q_T} \ar[dr]^{p_T}  & & & S \ar[dl]_{q_S} \ar[dr]^{p_S} &\\
 Z & & Y & Y & & X 
}
\]
there is an equivalence of spans
\[(ST)^\dagger \simeq T^\dagger S^\dagger. \]
\end{proposition}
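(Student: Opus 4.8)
The plan is to write down an explicit isomorphism of groupoids between the apexes of the two spans which commutes strictly with all the leg functors; the natural isomorphisms required by the definition of equivalence of spans (Definition \ref{EQUIVALENCE}) can then be taken to be identities. With the spans as drawn --- $T$ from $Y$ to $Z$ and $S$ from $X$ to $Y$ --- the target $Y$ of $S$ is the source of $T$, so their composite is $TS\maps X\to Z$ and the assertion reads $(TS)^\dagger\simeq S^\dagger T^\dagger$. Unwinding the weak pullback defining $TS$, an object of $TS$ amounts to $t\in T$, $s\in S$, and an isomorphism $\alpha\maps q_S(s)\to p_T(t)$ in $Y$; the span $(TS)^\dagger$ has this groupoid as apex, with $Z$-leg $q_T\pi_T$ and $X$-leg $p_S\pi_S$. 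Dually, an object of the composite $S^\dagger T^\dagger$ amounts to $s\in S$, $t\in T$, and an isomorphism $\beta\maps p_T(t)\to q_S(s)$ in $Y$, and the $Z$- and $X$-legs of $S^\dagger T^\dagger$ are again $q_T\pi_T$ and $p_S\pi_S$.

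I would then define a functor $G\maps TS\to S^\dagger T^\dagger$ on objects by $(t,s,\alpha)\mapsto(s,t,\alpha^{-1})$. A morphism of $TS$ from $(t,s,\alpha)$ to $(t',s',\alpha')$ is given by a morphism $f\maps s\to s'$ in $S$ and a morphism $g\maps t\to t'$ in $T$ for which the square
\[
\xymatrix{
q_S(s) \ar[r]^{\alpha} \ar[d]_{q_S(f)} & p_T(t) \ar[d]^{p_T(g)} \\
q_S(s') \ar[r]_{\alpha'} & p_T(t')
}
\]
commutes, and $G$ sends it to the morphism of $S^\dagger T^\dagger$ determined by the same $f$ and $g$. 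The only thing to verify is that $f$ and $g$ do define a morphism on the target side, i.e.\ that
\[
\xymatrix{
p_T(t) \ar[r]^{\alpha^{-1}} \ar[d]_{p_T(g)} & q_S(s) \ar[d]^{q_S(f)} \\
p_T(t') \ar[r]_{\alpha'^{-1}} & q_S(s')
}
\]
commutes; but this square is obtained from the previous one by inverting the two horizontal arrows, so the two conditions are equivalent. Functoriality of $G$ is then immediate, since composition in either weak pullback is computed componentwise in $S$ and $T$, and $G$ does nothing to those components.

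It remains to note that the analogous assignment $(s,t,\beta)\mapsto(t,s,\beta^{-1})$ is a functor $S^\dagger T^\dagger\to TS$ which is a strict two-sided inverse of $G$, so $G$ is an isomorphism of groupoids, in particular an equivalence. Moreover, reading off the leg functors shows that the $Z$-leg of $S^\dagger T^\dagger$ precomposed with $G$ equals the $Z$-leg of $(TS)^\dagger$, and likewise for the $X$-legs: $G$ only permutes the $T$- and $S$-components and inverts the connecting isomorphism, neither of which affects $q_T$ of the $T$-component or $p_S$ of the $S$-component. Taking the comparison natural isomorphisms to be identities, $G$ exhibits the equivalence $(TS)^\dagger\simeq S^\dagger T^\dagger$. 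I do not expect a real obstacle here; the one point demanding care is keeping track of the direction of the connecting isomorphism and checking that the commuting-square condition survives inversion, which is exactly the computation displayed above. Conceptually, the statement just records that the weak pullback of a cospan is symmetric, via the evident swap, under exchanging its two legs, and that this swap respects the outer legs.
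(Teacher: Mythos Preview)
Your proof is correct and is essentially a fully spelled-out version of the paper's own proof, which reads in its entirety: ``This is clear by the definition of composition.'' You make explicit exactly what the paper leaves implicit---the swap isomorphism $(t,s,\alpha)\mapsto(s,t,\alpha^{-1})$ between the two weak pullbacks over $Y$, and the verification that it respects the outer legs strictly---so the approaches coincide. You also correctly note and repair the typo in the displayed statement (it should read $(TS)^\dagger\simeq S^\dagger T^\dagger$, matching the version announced in Section~\ref{degroupoidification}).
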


\begin{proof}
This is clear by the definition of composition.
\end{proof}

\begin{proposition}\label{adjoint_add}
Given spans
\[
\xymatrix{
& S \ar[dl]_{q_S} \ar[dr]^{p_S} & & & T \ar[dl]_{q_T} \ar[dr]^{p_T} & \\
Y & & X & Y & & X
}
\]
there is an equivalence of spans
\[(S+T)^\dagger \simeq S^\dagger + T^\dagger. \]
\end{proposition}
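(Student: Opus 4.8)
The plan is to show that $(S+T)^\dagger$ and $S^\dagger + T^\dagger$ are literally the same span, so that the required equivalence is supplied by the identity functor together with identity natural isomorphisms. First I would unwind both constructions explicitly. By definition, the span $S+T$ from $X$ to $Y$ has apex the coproduct groupoid $S+T$, right leg the functor $[p_S,p_T]\maps S+T\to X$ obtained from $p_S$ and $p_T$ by the universal property of the coproduct, and left leg $[q_S,q_T]\maps S+T\to Y$ obtained likewise from $q_S$ and $q_T$. Taking the adjoint merely interchanges the two legs, so $(S+T)^\dagger$ is the span from $Y$ to $X$ with apex $S+T$, left leg $[p_S,p_T]$, and right leg $[q_S,q_T]$.

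Next I would do the same for the other side. The span $S^\dagger$ from $Y$ to $X$ has apex $S$, left leg $p_S$, and right leg $q_S$, and similarly for $T^\dagger$. Forming $S^\dagger+T^\dagger$ therefore yields a span from $Y$ to $X$ with apex $S+T$ whose left leg $S+T\to X$ is induced by the left legs of $S^\dagger$ and $T^\dagger$ --- that is, by $p_S$ and $p_T$ --- and whose right leg $S+T\to Y$ is induced by $q_S$ and $q_T$. By the uniqueness part of the universal property of the coproduct, this left leg is precisely $[p_S,p_T]$ and this right leg is precisely $[q_S,q_T]$. Hence the two spans agree on the nose, and the identity functor on $S+T$, together with identity natural isomorphisms between the corresponding legs, is the desired equivalence of spans in the sense of Definition \ref{EQUIVALENCE}.

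I do not expect a genuine obstacle: the argument is entirely formal, in the same spirit as the one-line proof of Proposition \ref{adjoint_comp}. The only point meriting a word of care is that the coproduct $S+T$ is determined only up to canonical isomorphism, so one should either fix one choice of coproduct and use it throughout, or observe that any two choices are related by an equivalence of spans whose comparison functor is the canonical isomorphism and whose coherence squares commute by the universal property. With that caveat noted, the equivalence $(S+T)^\dagger \simeq S^\dagger + T^\dagger$ follows immediately.
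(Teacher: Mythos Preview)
Your proof is correct and follows the same approach as the paper, which simply observes that addition of spans is given by the coproduct of groupoids and that this construction is symmetric with respect to swapping the legs. You have merely unpacked this one-line argument in greater detail.
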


\begin{proof}
This is clear since the addition of spans is given by 
coproduct of groupoids.  This construction is symmetric
with respect to swapping the legs of the span.
\end{proof}

\begin{proposition}
\label{innerprodandadjointprops}
Given a groupoid $\Lambda$ and square-integrable
groupoids $\Phi$, $\Psi$, and $\Psi'$ over $X$, we have the
following equivalences of groupoids:
\begin{enumerate}
\item
\[\ip{\Phi}{\Psi} \simeq \ip{\Psi}{\Phi}.\] 
\item
\[\ip{\Phi}{\Psi + \Psi'} \simeq \ip{\Phi}{\Psi} + \ip{\Phi}{\Psi'}.\] 
\item
\[\ip{\Phi}{\Lambda \times \Psi} \simeq \Lambda \times \ip{\Phi}{\Psi}.\]
\end{enumerate}
\end{proposition}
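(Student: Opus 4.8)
The plan is to recognize that the inner product groupoid $\ip{\Phi}{\Psi}$ is nothing but a weak pullback of the kind already studied in Lemma \ref{linearity}, so that parts (2) and (3) cost nothing new, while part (1) is a direct ``swap'' construction. Concretely, I would regard a groupoid $v\maps\Phi\to X$ over $X$ as a span from $X$ to the terminal groupoid $1$,
\[
\xymatrix{
& \Phi \ar[dl] \ar[dr]^{v} & \\
1 & & X
}
\]
and observe that, for any groupoid $w\maps\Theta\to X$ over $X$, unwinding the definition of weak pullback shows that the groupoid $\Phi\Theta$ obtained by applying this span to $\Theta$ has as objects the triples $(\phi,\theta,\alpha)$ with $\alpha\maps v(\phi)\to w(\theta)$ an isomorphism in $X$, and the evident commuting-square morphisms. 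This is precisely the inner product groupoid $\ip{\Phi}{\Theta}$, viewed as a bare groupoid --- which is all that is needed here.

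Granting this identification, part (2) follows immediately from Lemma \ref{linearity}(1) applied to the two-element family $\{\Psi,\Psi'\}$, since $\Psi+\Psi'$ is their coproduct: $\ip{\Phi}{\Psi+\Psi'}\simeq\Phi(\Psi+\Psi')\simeq\Phi\Psi+\Phi\Psi'\simeq\ip{\Phi}{\Psi}+\ip{\Phi}{\Psi'}$. Likewise, part (3) follows from Lemma \ref{linearity}(2): $\ip{\Phi}{\Lambda\times\Psi}\simeq\Phi(\Lambda\times\Psi)\simeq\Lambda\times\Phi\Psi\simeq\Lambda\times\ip{\Phi}{\Psi}$, where $\Lambda\times\Psi$ is equipped with the structure map $w\pi_2$ over $X$, exactly as in the definition of scalar multiplication of a groupoid over $X$. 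Note that an equivalence of groupoids over $S$ (or over $X$) is in particular an equivalence of underlying groupoids, so the slightly stronger conclusions of Lemma \ref{linearity} are more than enough.

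For part (1) I would write down the swap functor $F\maps\ip{\Phi}{\Psi}\to\ip{\Psi}{\Phi}$ by hand: on objects $(\phi,\psi,\alpha)\mapsto(\psi,\phi,\alpha^{-1})$, and on a morphism $(f,g)\maps(\phi,\psi,\alpha)\to(\phi',\psi',\alpha')$ set $F(f,g)=(g,f)$. The only point to verify is that the commuting square defining a morphism of $\ip{\Phi}{\Psi}$ becomes, after inverting its horizontal arrows $\alpha,\alpha'$, the commuting square defining a morphism of $\ip{\Psi}{\Phi}$; this is a one-line diagram chase. Since $F$ is visibly strictly invertible --- its mirror $\ip{\Psi}{\Phi}\to\ip{\Phi}{\Psi}$ given by the same formula is a two-sided inverse --- $F$ is an isomorphism of groupoids, hence in particular an equivalence.

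I do not expect any genuine obstacle: all three claims reduce to manipulations already performed, or to an obvious involution. The only mildly delicate point is bookkeeping --- keeping the direction convention for the structure isomorphism $\alpha$ (whether $\alpha\maps v(\phi)\to w(\psi)$ or its reverse) consistent across the three parts and in agreement with the conventions fixed in the definitions of weak pullback and of $\ip{\cdot}{\cdot}$. Because every comparison functor in sight is an actual isomorphism, any such mismatch is absorbed by passing to inverses and has no effect on the conclusions.
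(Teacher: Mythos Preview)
Your argument is correct. Part (1) is essentially identical to the paper's: the paper simply says ``since $\alpha$ is invertible, there is an evident equivalence of groupoids,'' which is exactly your swap functor written out.

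For parts (2) and (3) you take a slightly different, and more economical, route than the paper. You observe that $\ip{\Phi}{\Theta}$ is literally the groupoid $\Phi\Theta$ obtained by regarding $v\maps\Phi\to X$ as a span from $X$ to the terminal groupoid $1$ and applying it to $\Theta$; both parts then become direct instances of Lemma \ref{linearity}. The paper instead argues each part from scratch: for (2) it notes that the weak pullback ``splits'' over the disjoint union defining the coproduct, and for (3) it appeals to associativity (up to isomorphism) of the cartesian product. These ad-hoc arguments are essentially the specializations of the proof of Lemma \ref{linearity} to the present situation, so there is no substantive mathematical difference --- you simply avoid repeating work already done, and in doing so make explicit the point (used elsewhere in the paper, e.g.\ in Proposition \ref{innerprod_adjoint}) that the inner product is the special case of span action with target $1$.
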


\begin{proof}
Each part will follow easily from the definition of weak
pullback. First we label the maps for the groupoids over $X$ as
$v\maps\Phi\to X$, $w\maps\Psi\to X$, and $w' \maps\Psi' \to X$.

\begin{enumerate}
\item $\ip{\Phi}{\Psi} \simeq \ip{\Psi}{\Phi}.$\\ 
By definition of
weak pullback, an object of $\ip{\Phi}{\Psi}$ is a triple
$(a,b,\alpha)$ such that $a\in\Phi, b\in\Psi,$ and $\alpha\maps
v(a)\to w(b)$.  Similarly, an object of $\ip{\Psi}{\Phi}$ is a triple
$(b,a,\beta)$ such that $b\in\Psi, a\in\Phi,$ and $\beta\maps w(b) \to
v(a)$.  Since $\alpha$ is invertible, there is an evident equivalence
of groupoids.

\item $\ip{\Phi}{\Psi + \Psi'} \simeq \ip{\Phi}{\Psi} + \ip{\Phi}{\Psi'}.$\\
Recall that in the category of groupoids, the coproduct is just the
disjoint union over objects and morphisms. With this it is easy to
see that the definition of weak pullback will `split' over union. 

\item $\ip{\Phi}{\Lambda \times \Psi} \simeq \Lambda \times \ip{\Phi}{\Psi}.$\\
This follows from the associativity (up to isomorphism) of the 
cartesian product.
\end{enumerate}

\end{proof}

\subsubsection*{Acknowledgements}

We thank James Dolan, Todd Trimble, and the denizens of the $n$-Category 
Caf\'e for many helpful conversations.  This work was supported by the 
National Science Foundation under Grant No.\ 0653646.

\appendix
\section{Review of Groupoids}\label{appendix}

\begin{definition}
A {\bf groupoid} is a category in which all morphisms are invertible.
\end{definition}

\begin{notation}
We denote the set of objects in a groupoid $X$ by $\Ob(X)$ and the
set of morphisms by $\Mor(X)$.
\end{notation}

\begin{definition}
A {\bf functor} $F \maps X \to Y$ between categories is a pair of
functions $F \maps \Ob(X) \to \Ob(Y)$ and $F \maps \Mor(X) \to
\Mor(Y)$ such that $F(1_x) = 1_{F(x)}$ for $x\in \Ob(X)$ and $F(gh) =
F(g)F(h)$ for $g,h\in \Mor(X)$.
\end{definition}

\begin{definition}
A {\bf natural transformation} $\alpha \maps F \to G$ between 
functors $F,G \maps X \to Y$ consists of a morphism $\alpha_x \maps F(x)
\to G(x)$ in $\Mor(Y)$ for each $x\in \Ob(X)$ such that for each
morphism $h\maps x \to x'$ in $\Mor(X)$ the following naturality
square commutes:
\[
\xymatrix{
F(x) \ar[r]^{\alpha_x} \ar[d]_{F(h)} & G(x) \ar[d]^{G(h)} \\
F(x') \ar[r]_{\alpha_{x'}} & G(x')
}
\]
\end{definition}

\begin{definition}
A {\bf natural isomorphism} is a natural transformation $\alpha \maps
F \to G$ between functors $F,G \maps X \to Y$ such that for each $x
\in X$, the morphism $\alpha_x$ is invertible.
\end{definition}
\noindent
Note that a natural transformation between functors between
{\it groupoids} is necessarily a natural isomorphism.  

In what follows, and throughout the paper, we write $x \in X$ as
shorthand for $x \in \Ob(X)$.  Also, several places throughout this
paper we have used the notation $\alpha\cdot F$ or $F\cdot\alpha$ to
denote operations combining a functor $F$ and a natural transformation
$\alpha$.  These operations are called `whiskering':

\begin{definition}
\label{whiskering}
Given groupoids $X$, $Y$ and $Z$, functors $F\maps X\to Y$, $G\maps
Y\to Z$ and $H\maps Y\to Z$, and a natural transformation $\alpha\maps
G\Rightarrow H$, there is a natural transformation $\alpha\cdot F\maps
GF \Rightarrow HF$ called the {\bf right whiskering} of $\alpha$ by
$F$.  This assigns to any object $x \in X$ the morphism
$\alpha_{F(x)}\maps G(F(x)) \to H(F(x))$ in $Z$, which we denote as
$(\alpha\cdot F)_{x}$.  Similarly, given a groupoid $W$ and a functor 
$J \maps Z \to W$, there is a natural transformation $J\cdot \alpha\maps
JG \to JH$ called the {\bf left whiskering} of $\alpha$ by $J$.  This 
assigns to any object $y \in Y$ the morphism $J(\alpha_y)\maps JG(y) 
\to JH(y)$ in $W$, which we denote as $(J\cdot\alpha)_{y}$.
\end{definition}

\begin{definition}
\label{equivalence_of_groupoids}
A functor $F \maps X \to Y$ between groupoids is called an 
{\bf equivalence} if there
exists a functor $G \maps Y \to X$, called the {\bf weak inverse} of $F$, 
and natural isomorphisms $\eta \maps GF \to 1_X$ and $\rho \maps FG \to 1_Y$. 
In this case we say $X$ and $Y$ are {\bf equivalent}.
\end{definition}

\begin{definition}
A functor $F \maps X \to Y$ between groupoids is called {\bf faithful}
if for each pair of objects $x,y \in X$ the function $F \maps
\hom(x,y) \to \hom(F(x),F(y))$ is injective.
\end{definition}

\begin{definition}
A functor $F \maps X \to Y$ between groupoids is called {\bf full} if
for each pair of objects $x,y \in X$, the function $F \maps
\hom(x,y) \to \hom(F(x),F(y))$ is surjective.
\end{definition}

\begin{definition}
A functor $F \maps X \to Y$ between groupoids is called {\bf
essentially surjective} if for each object $y \in Y$, there
exists an object $x \in X$ and a morphism $f \maps F(x) \to y$ in
$Y$.
\end{definition}

\noindent
A functor has all three of the above properties if and only if the functor
is an equivalence.  It is often convenient to prove two groupoids are
equivalent by exhibiting a functor which is full, faithful and
essentially surjective.

\begin{definition}\label{MAP}
A {\bf map} from the span of groupoids
\[
\xymatrix{
& S\ar[dl]_{q} \ar[dr]^{p}   \\
Y & & X 
}
\]
to the span of groupoids
\[
\xymatrix{
& S'\ar[dl]_{q'} \ar[dr]^{p'}   \\
Y & & X 
}
\]
is a functor $F \maps S \to S'$ together with natural transformations
$\alpha \maps p \To p' F$, $\beta \maps q \To q' F$.
\end{definition}

\begin{definition}\label{EQUIVALENCE}
An {\bf equivalence} of spans of groupoids
\[
\xymatrix{
& S\ar[dl]_{g} \ar[dr]^{f} &  & & S'\ar[dl]_{g'} \ar[dr]^{f'} &\\
Y & & X & Y & & X
}
\]
is a map of spans $(F,\alpha,\beta)$ from $S$ to $S'$ such that 
$F \maps S \to S'$ is an equivalence of groupoids, together with a 
map of spans $(G,\alpha',\beta')$ from $S'$ to $S$ and
a natural isomorphism $\gamma \maps GF \Rightarrow 1$ such 
that the following equations hold:
\[ 1_p = 
(p \cdot \gamma)\circ (\alpha'\cdot F)\circ \alpha \]
and
\[ 1_q = 
(q \cdot \gamma)\circ (\beta'\cdot F)\circ \beta .\]
\end{definition}


\begin{lemma}\label{EQUIVGRPD}
Given equivalent groupoids $X$ and $Y$, $|X| = |Y|$.
\end{lemma}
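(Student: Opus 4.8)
The plan is to show that an equivalence $F \maps X \to Y$ puts the summands of $|X|$ into one-to-one correspondence with those of $|Y|$, with matching values. I would begin by recalling (from Appendix \ref{appendix}) that an equivalence of groupoids is the same thing as a full, faithful, and essentially surjective functor, so I am free to use all three properties of $F$.

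First I would establish that $F$ descends to a bijection $\underline{F} \maps \underline{X} \to \underline{Y}$, $[x] \mapsto [F(x)]$. It is well defined because functors preserve isomorphisms; it is surjective by essential surjectivity of $F$; and it is injective because a full and faithful functor reflects isomorphisms --- given an invertible $h \maps F(x) \to F(x')$, fullness produces $g \maps x \to x'$ and $g' \maps x' \to x$ with $F(g) = h$ and $F(g') = h^{-1}$, whereupon faithfulness forces $g'g = 1_x$ and $gg' = 1_{x'}$, so $x \iso x'$. Next, for a fixed object $x \in X$, the functor $F$ restricts to a group homomorphism $\Aut(x) \to \Aut(F(x))$ that is injective (faithfulness) and surjective (fullness), hence an isomorphism, so $|\Aut(x)| = |\Aut(F(x))|$.

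With these two observations in hand the computation is immediate: picking one representative object per isomorphism class in $X$ and reindexing along $\underline{F}$,
\[
|X| = \sum_{[x] \in \underline{X}} \frac{1}{|\Aut(x)|}
    = \sum_{[x] \in \underline{X}} \frac{1}{|\Aut(F(x))|}
    = \sum_{[y] \in \underline{Y}} \frac{1}{|\Aut(y)|}
    = |Y| .
\]
Since every summand is positive and the index sets are in bijection, the left-hand sum converges if and only if the right-hand one does, so the equality also covers the case $|X| = |Y| = \infty$.

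I do not expect a genuine obstacle here. The only two points that need to be stated with care --- rather than actually proved --- are that $F$ reflects isomorphisms (so that non-isomorphic objects of $X$ are not collapsed in $Y$) and that a sum of nonnegative terms may be rearranged freely along a bijection of index sets.
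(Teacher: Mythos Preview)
Your proof is correct and follows essentially the same approach as the paper: both use that an equivalence induces a bijection $\underline{F}\maps\underline{X}\to\underline{Y}$ and an isomorphism $\Aut(x)\cong\Aut(F(x))$ via fullness and faithfulness, then compare the sums term by term. Your version is simply more explicit about why $\underline{F}$ is injective and about the convergence issue, both of which the paper leaves implicit.
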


\begin{proof}
From a functor $F \maps X \to Y$ between groupoids, we can
obtain a function $\underline{F} \maps \underline{X} \to 
\underline{Y}$.  If $F$ is an equivalence, $\underline{F}$
is a bijection.  Since these are the indexing sets for the sum 
in the definition of groupoid cardinality, we just need to check 
that for a pair of elements $[x] \in \underline{X}$ and 
$[y] \in \underline{Y}$ such that $\underline{F}([x]) = [y]$, we
have $|\Aut(x)| = |\Aut(y)|$.  This follows from $F$ being full 
and faithful, and that the cardinality of automorphism
groups is an invariant of an isomorphism class of objects in a
groupoid.  Thus,
\[ |X| = 
\sum_{x\in \underline{X}}\frac{1}{|\Aut(x)|} = 
\sum_{y \in \underline{Y}}\frac{1}{|\Aut(y)|} = 
|Y|. \]
\end{proof}

\begin{lemma}\label{ESSENTIALPULLBACK}
Given a diagram of groupoids
\[ \def\objectstyle{\scriptstyle}
  \def\labelstyle{\scriptstyle}
   \xy
   (-20,10)*+{S}="1";
   (0,-10)*+{B}="2";
   (20,10)*+{T}="3";
        {\ar_{p} "1";"2"};
        {\ar^{q} "3";"2"};
        {\ar^{F} "1";"3"};
        {\ar@{=>}_<<{\scriptstyle \alpha} (-2,0); (2,4)};
\endxy
\]
where $F$ is an equivalence of groupoids, the
restriction of $F$ to the essential inverse $p^{-1}(b)$
\[ F|_{p^{-1}(b)} \maps p^{-1}(b) \to q^{-1}(b) \]
is an equivalence of groupoids, for any object $b\in B$.
\end{lemma}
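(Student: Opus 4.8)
The plan is to invoke the characterization recalled in the Appendix: a functor between groupoids is an equivalence if and only if it is full, faithful, and essentially surjective. So I would prove the lemma by checking these three conditions for the restricted functor $F|_{p^{-1}(b)} \maps p^{-1}(b) \to q^{-1}(b)$.

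Before that, I would first verify that this restriction even makes sense, that is, that $F$ actually carries $p^{-1}(b)$ into $q^{-1}(b)$. This is the single place where the natural isomorphism $\alpha \maps p \To qF$ is genuinely used: if $a$ is an object of $p^{-1}(b)$ then $p(a) \cong b$, and the component $\alpha_a \maps p(a) \to q(F(a))$ gives $q(F(a)) \cong p(a) \cong b$, so $F(a)$ is an object of $q^{-1}(b)$. On morphisms there is nothing to check, since by definition of essential inverse image $p^{-1}(b)$ is a full subgroupoid of $S$ and $q^{-1}(b)$ is a full subgroupoid of $T$, so all relevant morphisms are already present and $F$ simply restricts.

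Faithfulness and fullness of $F|_{p^{-1}(b)}$ are then immediate from those of $F$: for objects $a,a'$ of $p^{-1}(b)$ the map on $\hom(a,a')$ induced by $F|_{p^{-1}(b)}$ is literally the map $\hom(a,a') \to \hom(F(a),F(a'))$ induced by $F$, with target identified with the hom-set in $q^{-1}(b)$ by fullness of that subgroupoid; this map is injective because $F$ is faithful and surjective because $F$ is full. For essential surjectivity, I would take an object $t$ of $q^{-1}(b)$, so $q(t) \cong b$; essential surjectivity of $F$ supplies an object $a$ of $S$ and an isomorphism $h \maps F(a) \to t$ in $T$, whence $q(F(a)) \cong q(t) \cong b$ and, routing through $\alpha_a$ as above, $p(a) \cong b$, so $a$ is an object of $p^{-1}(b)$; moreover $h$, being an isomorphism in $T$ between objects of $q^{-1}(b)$, is an isomorphism in $q^{-1}(b)$. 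Hence $F|_{p^{-1}(b)}$ is full, faithful, and essentially surjective, so it is an equivalence, which proves the lemma.

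I do not expect a real obstacle here; the only point demanding any care is that the triangle commutes merely up to the natural isomorphism $\alpha$, so every step inferring $q(F(a)) \cong b$ from $p(a) \cong b$ (or conversely) must pass through the components of $\alpha$ rather than through an equality $p = qF$. Everything else is routine bookkeeping about full subgroupoids.
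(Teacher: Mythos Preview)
Your proposal is correct and follows essentially the same approach as the paper: verify that $F$ carries $p^{-1}(b)$ into $q^{-1}(b)$ via the components of $\alpha$, then deduce fullness and faithfulness from the fact that essential inverse images are full subgroupoids, and finally establish essential surjectivity by lifting via $F$ and routing back through $\alpha$. If anything, your write-up is slightly more careful than the paper's (which contains a minor typo in the essential surjectivity step).
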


\begin{proof}
It is sufficient to check that $F|_{p^{-1}(b)}$ is a full, faithful,
and essentially surjective functor from $p^{-1}(b)$ to $q^{-1}(b)$.
First we check that the image of $F|_{p^{-1}(b)}$ indeed lies in
$q^{-1}(b)$.  Given $b \in B$ and $x \in p^{-1}(b)$, there is
a morphism $\alpha_x \maps p(x) \to qF(x)$ in $B$.  Since $p(x) \in
[b]$, then $qF(x) \in [b]$.  It follows that $F(x) \in
q^{-1}(b)$.  Next we check that $F|_{p^{-1}(b)}$ is full and
faithful.  This follows from the fact that essential preimages are
full subgroupoids.  It is clear that a full and faithful functor
restricted to a full subgroupoid will again be full and faithful.  We
are left to check only that $F|_{p^{-1}(b)}$ is essentially
surjective.  Let $y \in q^{-1}(b)$.  Then, since $F$ is
essentially surjective, there exists $x \in S$ such that $F(x)
\in [y]$.  Since $qF(x) \in [b]$ and there is an isomorphism $\alpha_x
\maps p(x) \to qF(x)$, it follows that $x \in q^{-1}(b)$.  So
$F|_{p^{-1}(b)}$ is essentially surjective.  We have shown that
$F|_{p^{-1}(b)}$ is full, faithful, and essentially surjective, and,
thus, is an equivalence of groupoids.
\end{proof}

The data needed to construct a weak pullback of groupoids is a
`cospan':

\begin{definition}
Given groupoids $X$ and $Y$, a {\bf cospan} from $X$ to $Y$ is a diagram
\[\xymatrix{
Y\ar[dr]_g &  & X\ar[dl]^f\\
  & Z &  \\
}\]
where $Z$ is groupoid and $f\maps X\to Z$ and $g \maps Y\to Z$ are functors.
\end{definition}

\noindent
We next prove a lemma stating that the weak pullbacks of equivalent
cospans are equivalent.  Weak pullbacks, also called {\it iso-comma
objects}, are part of a much larger family of limits called {\it
flexible limits}.  To read more about flexible limits, see the work of
Street \cite{Street:1980} and Bird \cite{Bird:1989}.  A vastly more
general theorem than the one we intend to prove holds in this class of
limits.  Namely: for any pair of parallel functors $F,G$ from an
indexing category to $\Cat$ with a pseudonatural equivalence
$\eta\maps F \to G$, the pseudo-limits of $F$ and $G$ are equivalent.
But to make the paper self-contained, we strip this theorem down and
give a hands-on proof of the case we need.

To show that equivalent cospans of groupoids have equivalent weak
pullbacks, we need to say what it means for a pair of cospans to be
equivalent.  As stated above, this means that they are given by a pair
of parallel functors $F,G$ from the category consisting of a
three-element set of objects $\lbrace 1,2,3 \rbrace$ and two morphisms
$a\maps 1 \to 3$ and $b \maps 2 \to 3$.  Further there is a
pseudonatural equivalence $\eta \maps F \to G$.  In simpler terms,
this means that we have equivalences $\eta_i \maps F(i) \to G(i)$ for 
$i = 1,2,3$, and squares commuting up to natural isomorphism:
\[
 \def\objectstyle{\scriptstyle}
  \def\labelstyle{\scriptstyle}
   \xy
   (-30,10)*+{F(1)}="1";
   (-30,-10)*+{F(3)}="2";
   (-10,10)*+{G(1)}="3";
   (-10,-10)*+{G(3)}="4";
   (10,10)*+{F(1)}="5";
   (10,-10)*+{F(3)}="6";
   (30,10)*+{G(1)}="7";
   (30,-10)*+{G(3)}="8";   
        {\ar_{\eta_1} "1";"2"};
        {\ar^{F(a)} "1";"3"};
        {\ar^{\eta_3} "3";"4"};
        {\ar_{G(a)} "2";"4"};
        {\ar_{\eta_2} "5";"6"};
        {\ar^{F(b)} "5";"7"};
        {\ar^{\eta_3} "7";"8"};
        {\ar_{G(b)} "6";"8"};
        {\ar@{=>}^{v} (-23,-3)*{}; (-17,3)*{}}; 
        {\ar@{=>}^{w} (17,-3)*{}; (23,3)*{}}; 
\endxy
\]

For ease of notation we will consider the equivalent cospans:
\[
\xymatrix{
Y\ar[dr]_{g} && X\ar[dl]^{f} & \hat{Y}\ar[dr]_{\hat{g}} && \hat{X}\ar[dl]^{\hat{f}} \\
&Z& & &\hat{Z}&
}
\]
with equivalences $\hat{x}\maps X \to \hat{X}$, $\hat{y}\maps Y \to
\hat{Y}$, and $\hat{z}\maps Z \to \hat{Z}$ and natural isomorphisms
$v\maps \hat{z}f \Rightarrow \hat{f}\hat{x}$ and $w \maps \hat{z}g
\Rightarrow \hat{g}\hat{y}$.

\begin{lemma}\label{skeletal}
Given equivalent cospans of groupoids as described above, the weak
pullback of the cospan
\[
\xymatrix{
Y\ar[dr]_{g} & & X\ar[dl]^{f} \\
& Z & 
}
\]
is equivalent to the weak pullback of the cospan
\[
\xymatrix{
\hat{Y}\ar[dr]_{\hat{g}} & & \hat{X}\ar[dl]^{\hat{f}} \\
& \hat{Z} & 
}
\]
\end{lemma}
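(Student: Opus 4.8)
The plan is to construct an explicit equivalence of groupoids $\Phi \maps P \to \hat P$, where $P$ is the weak pullback of the first cospan and $\hat P$ that of the second, and then to verify that $\Phi$ is full, faithful, and essentially surjective, invoking the characterization of equivalences recalled in Appendix \ref{appendix}. Recall that an object of $P$ is a triple $(x,y,\alpha)$ with $x\in X$, $y\in Y$, and an isomorphism $\alpha \maps f(x)\to g(y)$ in $Z$, while a morphism $(k,j) \maps (x,y,\alpha)\to(x',y',\alpha')$ is a pair $k\maps x\to x'$, $j\maps y\to y'$ with $\alpha'\circ f(k) = g(j)\circ\alpha$; objects and morphisms of $\hat P$ are described the same way with hats throughout.

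First I would define $\Phi$ on objects by $\Phi(x,y,\alpha) = (\hat x(x),\,\hat y(y),\, w_y\circ\hat z(\alpha)\circ v_x^{-1})$, where $v_x \maps \hat z f(x)\to \hat f\hat x(x)$ and $w_y \maps \hat z g(y)\to \hat g\hat y(y)$ are the components of the natural isomorphisms $v\maps \hat z f\To\hat f\hat x$ and $w\maps\hat z g\To\hat g\hat y$; thus $w_y\circ\hat z(\alpha)\circ v_x^{-1}$ is an isomorphism $\hat f\hat x(x)\to\hat g\hat y(y)$ in $\hat Z$, of the correct type. On a morphism $(k,j)$ I would set $\Phi(k,j) = (\hat x(k),\hat y(j))$. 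That this really is a morphism of $\hat P$ is a short diagram chase: pasting the naturality square of $v$ at $k$ and of $w$ at $j$ onto the relation $\alpha'\circ f(k) = g(j)\circ\alpha$ (after applying $\hat z$) yields exactly the square required of $(\hat x(k),\hat y(j))$. Functoriality of $\Phi$ is then immediate from functoriality of $\hat x$ and $\hat y$, and faithfulness of $\Phi$ is immediate from faithfulness of $\hat x$ and $\hat y$.

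For fullness, given a morphism $(\hat k,\hat j)\maps \Phi(x,y,\alpha)\to\Phi(x',y',\alpha')$ in $\hat P$, I would use fullness of $\hat x$ and $\hat y$ to pick $k\maps x\to x'$, $j\maps y\to y'$ with $\hat x(k)=\hat k$, $\hat y(j)=\hat j$; running the chase of the previous paragraph in reverse, the commuting square for $(\hat k,\hat j)$ reduces to $\hat z(\alpha'\circ f(k)) = \hat z(g(j)\circ\alpha)$, so faithfulness of $\hat z$ gives $\alpha'\circ f(k)=g(j)\circ\alpha$ and hence $(k,j)$ is the desired preimage. For essential surjectivity, given $(\hat a,\hat b,\hat\alpha)\in\hat P$, essential surjectivity of $\hat x$ and $\hat y$ yields $x\in X$, $y\in Y$ and isomorphisms $\phi\maps\hat x(x)\to\hat a$, $\psi\maps\hat y(y)\to\hat b$; then fullness of $\hat z$ lets me choose a morphism $\alpha\maps f(x)\to g(y)$ with $\hat z(\alpha) = w_y^{-1}\circ\hat g(\psi)^{-1}\circ\hat\alpha\circ\hat f(\phi)\circ v_x$, and $\alpha$ is automatically an isomorphism since $Z$ is a groupoid. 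Unwinding the definition of $\Phi$, the connecting isomorphism of $\Phi(x,y,\alpha)$ is $\hat g(\psi)^{-1}\circ\hat\alpha\circ\hat f(\phi)$, so $(\phi,\psi)$ is an isomorphism in $\hat P$ from $\Phi(x,y,\alpha)$ to $(\hat a,\hat b,\hat\alpha)$.

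The main obstacle is purely the bookkeeping in these commuting squares: verifying well-definedness of $\Phi$ on morphisms and the fullness step both amount to patiently pasting the naturality squares of $v$ and $w$ and using faithfulness of $\hat z$. The only nonmechanical point is noticing that essential surjectivity genuinely requires fullness of $\hat z$ to realize an arbitrary connecting isomorphism from the target. I would also remark up front that, since every category in sight is a groupoid, all morphisms occurring are automatically invertible, which spares several separate invertibility checks; the lemma asks only for an equivalence of the weak pullback groupoids, so no compatibility of $\Phi$ with the cospan legs needs to be recorded.
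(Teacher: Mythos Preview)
Your proof is correct and follows essentially the same approach as the paper's: both construct the functor sending $(x,y,\alpha)$ to $(\hat x(x),\hat y(y),w_y\circ\hat z(\alpha)\circ v_x^{-1})$ and verify full, faithful, and essentially surjective using exactly the same ingredients (naturality of $v$ and $w$ for well-definedness, faithfulness of $\hat x,\hat y$ for faithfulness, fullness of $\hat x,\hat y$ plus faithfulness of $\hat z$ for fullness, and essential surjectivity of $\hat x,\hat y$ plus fullness of $\hat z$ for essential surjectivity). Your bookkeeping is in fact slightly cleaner than the paper's, which contains a minor orientation typo in the displayed formula for the third component.
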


\begin{proof}
We construct a functor $F$ between the weak
pullbacks $XY$ and $\hat{X}\hat{Y}$ and show that this functor is an
equivalence of groupoids, i.e., that it is full, faithful and
essentially surjective.  We recall that an object in the weak pullback
$XY$ is a triple $(r,s,\alpha)$ with $r\in X$, $s\in Y$ and
$\alpha \maps f(r) \to g(s)$.  A morphism in
$\rho \maps (r,s,\alpha) \to (r',s',\alpha')$ in $XY$ is given by a
pair of morphisms $j \maps r\to r'$ in $X$ and $k\maps s\to s'$ in $Y$
such that $g(k)\alpha = \alpha' f(j)$.  We define
\[F \maps XY \to \hat{X}\hat{Y}\]
on objects by
\[(r,s,\alpha) \mapsto (\hat{x}(r),\hat{y}(s),w_s^{-1}\hat{z}(\alpha)v_r)\]
and on a morphism $\rho$ by sending $j$ to $\hat{x}(j)$ and $k$ to
$\hat{y}(k)$.  To check that this functor is well-defined we consider
the following diagram:
\[
\xymatrix{
\hat{f}\hat{x}(r)\ar[r]^{v_r}\ar[d]_{\hat{f}\hat{x}(j)} & \hat{z}f(r)\ar[r]^{\hat{z}(\alpha)}\ar[d]_{\hat{z}f(j)} & \hat{z}g(s)\ar[r]^{w_s^{-1}}\ar[d]^{\hat{z}g(k)} & \hat{g}\hat{y}(s)\ar[d]^{\hat{g}\hat{y}(k)}\\
\hat{f}\hat{x}(r')\ar[r]_{v_{r'}} & \hat{z}f(r')\ar[r]_{\hat{z}(\alpha')} & \hat{z}g(s')\ar[r]_{w_{s'}^{-1}} & \hat{g}\hat{y}(s')
}
\]
The inner square commutes by the assumption that $\rho$ is a morphism
in $XY$.  The outer squares commute by the naturality of $v$ and $w$.
Showing that $F$ respects identities and composition is
straightforward.

We first check that $F$ is faithful.  Let $\rho,\sigma \maps
(r,s,\alpha) \to (r',s',\alpha')$ be morphisms in $XY$ such that
$F(\rho) = F(\sigma)$.  Assume $\rho$ consists of
morphisms $j\maps r \to r'$, $k\maps s \to s'$ and $\sigma$ consists
of morphisms $l\maps r \to r'$ and $m \maps s \to s'$.  It follows
that $\hat{x}(j) = \hat{x}(l)$ and $\hat{y}(k) = \hat{y}(m)$.  Since
$\hat{x}$ and $\hat{y}$ are faithful we have that $j=l$ and $k=m$.
Thus, we have shown that $\rho = \sigma$ and $F$ is
faithful.

To show that $F$ is full, we assume $(r,s,\alpha)$ and
$(r',s',\alpha')$ are objects in $XY$ and $\rho \maps
(\hat{x}(r),\hat{y}(s),\hat{z}(\alpha)) \to
(\hat{x}(r'),\hat{y}(s'),\hat{z}(\alpha'))$ is a morphism in
$\hat{X}\hat{Y}$ consisting of morphisms $j \maps \hat{x}(r) \to
\hat{x}(r')$ and $k \maps \hat{y}(s) \to \hat{y}(s')$.  Since
$\hat{x}$ and $\hat{y}$ are full, there exist morphisms $\tilde{j}
\maps r \to r'$ and $\tilde{k}\maps s \to s'$ such that
$\hat{x}(\tilde{j}) = j$ and $\hat{y}(\tilde{k}) = k$.  We consider
the following diagram:
\[
\xymatrix{
\hat{z}(f(r))\ar[r]^{v_r^{-1}} \ar[d]_{\hat{z}(f(\tilde{j}))} & \hat{f}\hat{x}(r)\ar[r]^{\hat{z}(\alpha)}\ar[d]_{\hat{f}\hat{x}(\tilde{j})} & \hat{g}\hat{y}(s)\ar[r]^{w_s}\ar[d]^{\hat{g}\hat{y}(\tilde{k})} & \hat{z}(g(s)) \ar[d]^{\hat{z}(g(\tilde{k}))} \\
\hat{z}(f(r'))\ar[r]_{v_{r'}^{-1}} & \hat{f}\hat{x}(r') \ar[r]_{\hat{z}(\alpha')} & \hat{g}\hat{y}(s')\ar[r]_{w_s} & \hat{z}(g(s'))
}
\]
The center square commutes by the assumption that $\rho$ is a morphism
in $\hat{X}\hat{Y}$, and the outer squares commute by naturality of
$v$ and $w$.  Since $\hat{z}$ is full, there exists morphisms
$\bar{\alpha}\maps f(r) \to g(s)$ and $\bar{\alpha'} \maps f(r') \to
g(s')$ such that $\hat{z}(\bar{\alpha}) = w_s\hat{z}(\alpha)v_r^{-1}$
and $\hat{z}(\bar{\alpha'}) = w_{s'}\hat{z}(\alpha')v_{r'}^{-1}$.  Now
since $\hat{z}$ is faithful, we have that
\[
\xymatrix{
f(r)\ar[r]^{\bar{\alpha}} \ar[d]_{f(\tilde{j})} & g(s) \ar[d]^{g(\tilde{k})} \\
f(r') \ar[r]_{\bar{\alpha'}} & g(s')
}
\]
commutes.  Hence, $F$ is full.

To show $F$ is essentially surjective we let $(r,s,\alpha)$
be an object in $\hat{X}\hat{Y}$.  Since $\hat{x}$ and $\hat{y}$ are
essentially surjective, there exist $\tilde{r} \in X$ and
$\tilde{s}\in Y$ with isomorphisms $\beta \maps
\hat{x}(\tilde{r}) \to r$ and $\gamma \maps \hat{y}(\tilde{s}) \to s$.
We thus have the isomorphism:
\[\hat{z}(f(\tilde{r}))\stackrel{v_{\tilde{r}^{-1}}}\longrightarrow \hat{f}(\hat{x}(\tilde{r}))\stackrel{\hat{f}(\beta)}\longrightarrow \hat{f}(r) \stackrel{\alpha}\longrightarrow \hat{g}(s) \stackrel{\hat{g}(\gamma^{-1})}\longrightarrow \hat{g}(\hat{y}(\tilde{s}))\stackrel{w_{\tilde{s}}}
\longrightarrow \hat{z}(g(\tilde{s}))\]
Since $\hat{z}$ is full, there exists an isomorphism $\mu \maps
f(\tilde{r}) \to g(\tilde{s})$ such that $\hat{z}(\mu) =
w_s\hat{g}(\gamma^{-1})\alpha\hat{f}(\beta)v_r^{-1}$.  We have
constructed an object $(\tilde{r},\tilde{s},\mu)$ in $XY$ and we need
to find an isomorphism from $F((\tilde{r},\tilde{s},\mu) =
(\hat{x}(\tilde{r}), \hat{y}(\tilde{s}),w_s^{-1}\hat{z}(\mu)v_r)$ to
$(r,s,\alpha)$.  This morphism consists of $\beta \maps
\hat{x}(\tilde{r}) \to r$ and $\gamma \maps \hat{y}(\tilde{s}) \to s$.
That this is an isomorphism follows from $\beta,\gamma$ being
isomorphisms and the following calculation:
\begin{eqnarray*}
\hat{g}(\gamma)w_s^{-1}\hat{z}(\mu)v_r &=& \hat{g}(\gamma)w_{\tilde{s}}^{-1}w_{\tilde{s}}\hat{g}(\gamma^{-1})\alpha\hat{f}(\beta)v_{\tilde{r}}^{-1}v_{\tilde{r}}\\
&=& \alpha\hat{f}(\beta)
\end{eqnarray*}
We have now shown that $F$ is essentially surjective, and
thus an equivalence of groupoids.
\end{proof}

\end{document}